\long\def\emptytext#1{}
\numberwithin{equation}{section}
\newtheorem{thm}{Theorem}[section]
\newtheorem{lem}[thm]{Lemma}
\newtheorem{prop}[thm]{Proposition}
\newtheorem{cor}[thm]{Corollary}
\theoremstyle{definition}
\newtheorem{defn}[thm]{Definition}
\theoremstyle{remark}
\newtheorem{rmk}[thm]{Remark}
\newtheorem{ex}[thm]{Example}
\newtheorem{notn}[thm]{Notation}
\newcommand \Om{\Omega}
\newcommand \del{\partial}
\newcommand \vp{\varphi}
\newcommand \ve{\varepsilon}
\newcommand\op{\mathcal}
\newcommand\cat{\mathsf}
\newcommand{\ob}{\operatorname{Ob}}
\newcommand{\mor}{\operatorname{Mor}}
\newcommand{\G}{\mathbb G}
\renewcommand{\P}{\mathbb P}
\newcommand{\C}{\mathsf C}
\newcommand{\D}{\mathsf D}
\newcommand\M{\mathsf M}
\newcommand\adjunct[4]{\xymatrix{#1\ar @<1.18ex>[rr]^{#3}&\perp&#2\ar @<1.18ex>[ll]^{#4}}}
\newcommand\egal[2]{\overset {#1}{\underset {#2}\rightrightarrows }}
\newcommand\pist{\pi_{*}^{s}}
\newcommand\sset{\cat{sSet}_{*}}
\newcommand\ksset{(\cat{sSet}_{*})_{\mathrm{Kan}}}
\newcommand\esset{(\cat{sSet}_{*})_{\op E}}
\newcommand\uesset{(\cat{sSet})_{\op E}}
\newcommand\usset{\cat {sSet}}
\newcommand\gsset{\G X\negthinspace-\negthinspace\cat {sSet}}
\renewcommand\P{\mathbb P}
\newcommand\id{\mathrm{Id}}
\newcommand\sx{\Sigma^{\infty}X_{+}}
\newcommand\sgx{\Sigma^{\infty}(\G X)_{+}}
\newcommand\sh{\Sigma^{\infty}H_{+}}
\newcommand\comod{\cat{Comod}}
\newcommand\comodx{\cat{Comod}_{X_{+}}}
\newcommand\comodsx{\cat{Comod}_{\Sigma ^{\infty}X_{+}}}
\newcommand\rx{\cat{R}_{X}}
\newcommand\modgx{\cat{Mod}_{\G X}}
\newcommand\modgsx{\cat{Mod}_{\sgx}}
\newcommand\spec{\cat{Sp}}
\newcommand\espec{\cat{Sp}_{\op E}}
\newcommand\lspec{\cat{Sp}_{\mathrm{pr}}}
\newcommand\lespec{(\cat{Sp}_{\op E})_{\mathrm{pr}}}
\newcommand\alg{\cat{Alg}}
\newcommand\algh{\cat{Alg}_{\Sigma^{\infty}H_{+}}}
\newcommand\retx{\operatorname{Ret}_{X}}
\newcommand{\WE}{\mathcal{W}}
\newcommand{\Fib}{\mathcal{F}}
\newcommand{\Cof}{\mathcal{C}}
\newcommand{\rlp}[1]{{#1}^\boxslash}
\newcommand{\llp}[1]{{}^\boxslash\!{#1}}
\newcommand\X{\mathcal X}
\newcommand\Z{\mathcal Z}
\newcommand\I{\mathcal I}
\newcommand\J{\mathcal J}
\newcommand\cS{\mathcal S}
\newcommand\sps{\cat {Sp}^{\Sigma}}
\newcommand\spst{\cat {Sp}^{\Sigma}_{\mathrm{st}}}
\newcommand\spproj{\cat {Sp}^{\Sigma}_{\mathrm{pr}}}
\newcommand\colim{\operatorname{colim}}
\newcommand\totimes{\mathbin{\widetilde\otimes}}
\newcommand\hotimes{\mathbin{\widehat\otimes}}
\newcommand\hatsquare{\widehat\wedge}
\newcommand\map{\operatorname{Map}}
\newcommand{\du}{\coprod} 
\newcommand{\sm}{\wedge}
\newcommand{\iso}{\cong}
\newcommand{\hz}{\op H\mathbb Z}
\newcommand\modslx{\cat{Mod}_{\slx}}
\newcommand\modlx{\cat{Mod}_{\Omega X}}
\newcommand\slx{\Sigma^{\infty}(\Omega X)_{+}}
\newcommand\comodsh{\cat{Comod}_{\Sigma ^{\infty}H_{+}}}
\newcommand{\tX}{\widetilde X}
\newcommand\rtx{\cat{R}_{\tX}}
\newcommand\comodtx{\cat{Comod}_{\tX_{+}}}
\newcommand{\Hq}{{\op H}q^*}
\newcommand{\HZ}{\op H\mathbb Z}
\begin{document}
\title [Waldhausen $K$-theory of spaces via comodules]{Waldhausen $K$-theory of spaces via comodules}
\author {Kathryn Hess}
\author{Brooke Shipley}
\thanks{This material is based upon work supported by the National Science Foundation under Grant No.~0932078000 while the authors were in residence at the Mathematical Sciences Research Institute in Berkeley, California, during the Spring 2014 semester, and the second author was supported during this project by the NSF under Grant No.~1104396.  
}

\address{MATHGEOM \\
    \'Ecole Polytechnique F\'ed\'erale de Lausanne 
    CH-1015 Lausanne \\
    Switzerland}
    \email{kathryn.hess@epfl.ch}

\address{Department of Mathematics, Statistics and Computer Science, University of Illinois at
Chicago, 508 SEO m/c 249,
851 S. Morgan Street,
Chicago, IL, 60607-7045, USA}
    \email{bshipley@math.uic.edu}
    
\date {\today }
 \keywords {Retractive space, comonad, model category, Waldhausen $K$-theory, stabilization}

 \subjclass [2010] {{Primary: 55U35; Secondary: 16T15, 18C15, 19D10, 16T15, 55P42, 55P43}}

 \begin{abstract} Let $X$ be a simplicial set. We construct a novel adjunction between the categories $\rx$ of retractive spaces over $X$ and $\comodx$ of $X_{+}$-comodules, then apply recent work on left-induced model category structures \cite{bhkkrs}, {\cite{hkrs}} to establish the existence of a left proper, simplicial model category structure on $\comodx$ with respect to which the adjunction is a Quillen equivalence {after localization with respect to some generalized homology theory $\op E_{*}$}.  We show moreover that this model category structure on $\comodx$ stabilizes, giving rise to a model category structure on $\comodsx$, the category of $\sx$-comodule spectra.
 
It follows that the Waldhausen $K$-theory of $X$, $A(X)$, is naturally weakly equivalent to the Waldhausen $K$-theory of {$\comodsx^{\mathrm{hf}}$,} the category of homotopically finite $\sx$-comodule spectra,  where the weak equivalences are given by twisted homology.  For $X$ simply connected,  we exhibit explicit, natural weak equivalences between the $K$-theory of $\comodsx^{\mathrm{hf}}$ and that of the category of homotopically finite $\Sigma^{\infty}(\Om X)_+$-modules, a more familiar model for $A(X)$. For $X$ not necessarily simply connected, we have $\op E_*$-local versions of these results for any generalized homology theory $\op E_{*}$.

For $H$ a simplicial monoid, $\comodsh$ admits a monoidal structure and induces a model structure on the category $\algh$ of $\sh$-comodule algebras. This provides a setting for defining \emph{homotopy coinvariants} of the coaction of $\sh$ on a $\sh$-comodule algebra, which is essential for homotopic Hopf-Galois extensions of ring spectra as originally defined by Rognes \cite {rognes} and generalized in \cite{hess:hhg}.  An algebraic analogue of this was only recently developed, and then only over a field \cite{bhkkrs}.
\end{abstract}

 \maketitle

 \tableofcontents

\section {Introduction}\label{sec:intro}

In \cite{blumberg-mandell}, Blumberg and Mandell provided a description of the Waldhausen $K$-theory spectrum $A(X)$ of a simply connected, {finite} simplicial set $X$ in terms of modules over $DX$, the Spanier-Whitehead dual of $X$.  They proved that $A(X)$ is  weakly equivalent to the Waldhausen $K$-theory of the {full subcategory of the model category of $DX$-modules generated by the modules} that are isomorphic in the derived category of $DX$ to objects in the thick subcategory generated by the sphere spectrum $S$.  

In this paper we establish a sort of Koszul dual to this result, providing a description for any {connected} simplicial set $X$ of the Waldhausen $K$-theory $A(X)$ in terms of the $K$-theory of the category of comodules over $X_{+}$, localized with respect to twisted homology.  Here $X_+$ is the pointed simplicial set obtained by adding a disjoint basepoint to $X$.  
Advantages to this new presentation of the Waldhausen $K$-theory of a space include that we do not have to dualize the simplicial set $X$ and that $X$ does not have to be  simply connected {or finite}. 
We also describe the Waldhausen $K$-theory $A(X)$, for connected $X$, in terms of 
stable comodules over 
the suspension spectrum $\sx$ with stable equivalences {detected} by
the forgetful functor to symmetric spectra.

We make explicit the Koszul duality between our framework and that of Blumberg and Mandell, establishing a Quillen equivalence for any reduced simplicial set $X$ between $\comodsx$, the model category of comodules in symmetric spectra over the suspension spectrum $\sx$, and $\modslx$, the model category of modules in symmetric spectra over the suspension spectrum $\Sigma^{\infty}(\Omega X)_{+}$ of based loops on $X$, {where we work $\op E_{*}$-locally with respect to any generalized homology theory $\op E_{*}$.}  {When $\op E_{*}=\HZ_{*}$, ordinary integral homology, and $X$ is simply connected, this Quillen equivalence induces a weak equivalence from a module-based model for $A(X)$ like that of Blumberg and Mandell to our comodule-based model.}

\subsection{Model category structures for comodules}

For any simplicial set $X$, let $\rx$ denote the category of retractive spaces over $X$, the objects of which are pairs of simplicial maps $(i:X\to Z, r:Z\to X)$ such that $ri=\id_{X}$; the morphisms are simplicial maps commuting with the inclusion and retraction maps. It is easy to see that $\rx$ admits a model category structure in which the fibrations, cofibrations and weak equivalences are all created in the underlying category of simplicial sets endowed with its usual Kan model category structure.  For any generalized reduced homology theory $\op E_{*}$, this model category structure on $\rx$ can be localized, giving rise to a model category structure with the same cofibrations, while the weak equivalences are $\op E_{*}$-equivalences, i.e., simplicial maps inducing isomorphisms in $\op E_{*}$-homology, with respect to any choice of basepoint.

Let $\comodx$ denote the category of pointed $X_{+}$-comodules with respect to the smash product of pointed simplicial sets, i.e., objects of $\comodx$ are pairs $(Y, \rho)$, where $Y$ is a pointed simplicial set, and $\rho: Y \to Y\wedge X_{+}$ is a coassociative and counital map of pointed simplicial sets.  

The heart of this article is the construction of a  framework in which to study the homotopy theory of pointed $X_{+}$-comodules.  
{The proofs of this theorem and of its immediate consequences rely heavily on recent work concerning the existence and properties of left-induced model category structures \cite{bhkkrs}, {\cite{hkrs} }(cf.~Appendix \ref{appendix}).} 

\begin{thm}\label{thm:adjunct-intro}[Theorems \ref{thm:adjunct}, \ref{thm:main}, {and \ref{thm:main-local}}] \label{thm:intro1} Let $X$ be a simplicial set. There exists an adjunction
$$\adjunct{\rx}{\comodx}{-/X}{-\star X}$$
and a model category structure on $\comodx$ {with weak equivalences created in $\ksset$}, with respect to which the adjunction {becomes} a Quillen equivalence, when {the model category structures on both $\rx$ and $\comodx$ are localized with respect to any generalized reduced homology theory $\op E_{*}$}.   
\end{thm}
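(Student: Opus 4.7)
The plan is to address the three assertions in the statement --- the adjunction, the model structure on $\comodx$, and the Quillen equivalence after $\op E_*$-localization --- one at a time.

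First, I would construct the adjunction explicitly. The left adjoint $-/X$ sends a retractive space $(X \overset{i}{\to} Z \overset{r}{\to} X)$ to the pointed simplicial set $Z/X$ (collapsing $i(X)$ to the basepoint), equipped with the coaction $\rho([z]) = [z] \wedge r(z)$ on non-basepoint simplices; coassociativity and counitality follow formally from $r \circ i = \id_X$ and the standard comonoid structure on $X_+$. For the right adjoint $-\star X$, given a comodule $(Y, \rho)$ I extract the partial map $r\colon Y \setminus \{\ast\} \to X$ from $\rho(y) = y \wedge r(y)$ via the counit axiom $(\id_Y \wedge \epsilon) \circ \rho = \id_Y$, and take $Y \star X$ to be the simplicial set $(Y \setminus \{\ast\}) \sqcup X$, with section the inclusion of $X$, retraction equal to $r$ on $Y \setminus \{\ast\}$ and the identity on $X$, and face maps that use $r$ to land a face in $X$ whenever it would otherwise be the basepoint of $Y$. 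The adjunction bijection is then a direct verification, and in fact the unit and counit turn out to be isomorphisms.

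Second, I would construct the model structure on $\comodx$ by left-inducing from the pointed Kan model structure on $\sset$ along the cofree-forgetful adjunction
\[
U\colon \comodx \rightleftarrows \sset : -\wedge X_+,
\]
invoking the general left-induction result recalled in Appendix~\ref{appendix} (from \cite{bhkkrs, hkrs}). The induced structure automatically has weak equivalences and cofibrations created by $U$, so in particular its weak equivalences are exactly those of $\ksset$. The hypotheses to verify are local presentability of $\comodx$ (immediate, since $-\wedge X_+$ is a finitary comonad on a locally presentable category) and the acyclicity condition, namely that every map with right lifting against the generating cofibrations is already a weak equivalence; I would attack this via functorial path objects built from cotensors with $\Delta^1$ in $\sset$.

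Third, I would apply left Bousfield localization to both $\rx$ and $\comodx$ at $\op E_*$-equivalences; both structures are left proper (inherited from $\sset$) and combinatorial, so the localizations exist. To check $(-/X, -\star X)$ is Quillen between the localized structures, observe that a morphism in $\rx$ that is a monomorphism on underlying simplicial sets must send $Z \setminus X$ into $Z' \setminus X$ (because it commutes with the retractions), so $-/X$ preserves monomorphisms and hence cofibrations. The cofibre sequence $X \to Z \to Z/X$ together with the retraction $r$ produces a splitting
\[
0 \to \op E_*(X) \to \op E_*(Z) \to \op E_*(Z/X) \to 0,
\]
which shows that a morphism in $\rx$ fixing $X$ is an $\op E_*$-equivalence of total spaces if and only if its image under $-/X$ is an $\op E_*$-equivalence of quotients, so $-/X$ preserves $\op E_*$-acyclic cofibrations as well. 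The Quillen equivalence is then immediate because the unit and counit of the adjunction are already isomorphisms in $\rx$ and $\comodx$, respectively.

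The main obstacle is the second step: establishing the acyclicity condition needed for left-induction. Left-induction, unlike right-induction, is not automatic, and constructing path objects that behave well with respect to the cofree comodule functor and the generating cofibrations of $\sset$ requires real care. All the remaining verifications --- the adjunction, the Quillen property after localization, and the Quillen equivalence --- are then relatively smooth consequences of the splitting afforded by the retraction and of the essentially inverse nature of the functors $-/X$ and $-\star X$.
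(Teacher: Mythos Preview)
Your overall strategy matches the paper's, but there is one genuine error and one smaller confusion.

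\textbf{The unit is not an isomorphism.} You claim that ``the unit and counit turn out to be isomorphisms,'' and you use this at the end to conclude the Quillen equivalence. This is false. The counit $\big((Y,\rho)\star X\big)/X \to (Y,\rho)$ is indeed a natural isomorphism, but the unit $(Z,i,r)\to \big((Z,i,r)/X\big)\star X$ is only an $\op E_*$-equivalence for every $\op E_*$, not an isomorphism. In the paper's Remark~\ref{rmk:equivalence}, the example $(Z,i,r)=(X\times\Delta[1],i_0,\mathrm{pr}_1)$ shows explicitly that $\big((Z,i,r)/X\big)\star X$ is not isomorphic to $Z$ when $X\neq *$; the adjunction is an equivalence of categories only for $X=*$. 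Your set-level description of $Y\star X$ as $(Y\setminus\{*\})\sqcup X$ with modified face maps is essentially correct, but the point is that two retractive spaces with the same quotient $Z/X$ and the same retraction data may differ in how simplices of $Z\setminus i(X)$ attach to $i(X)$, and this information is lost under $-/X$. The fix is painless: the split cofibre-sequence argument you already give shows that the unit is an $\op E_*$-equivalence, and then the Quillen equivalence follows (as in Remark~\ref{rmk:quillen}) by comparing $f$ and $f^\flat$ through the diagram of split cofibre sequences, rather than by appealing to invertibility of the unit.

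\textbf{Cylinders, not paths.} For the acyclicity condition in left-induction you propose ``functorial path objects built from cotensors with $\Delta^1$.'' The version of the criterion used here (Theorem~\ref{thm:quillen-path}, the dual of the Quillen path-object argument) asks for good \emph{cylinder} objects, and the paper supplies them via the \emph{tensoring} $(Y,\rho)\,\totimes\,\Delta[1]_+$. Cotensors in $\comodx$ are not created in $\sset$, so verifying that a cotensor-built path object has the correct homotopy type would require extra work; the cylinder route is immediate because $U\big((Y,\rho)\totimes K\big)=Y\wedge K$.
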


{We show that both the model structure induced from $\ksset$ and the $\op E_{*}$-local model category structure on $\comodx$, denoted $(\comodx)_{\mathrm{Kan}}$ and $(\comodx)_{\op E}$, respectively, are left proper, simplicial and cofibrantly generated,} and that if $(X,x_{0}, \mu)$ is a simplicial monoid, then the usual smash product of pointed simplicial sets lifts to $(\comodx)_{\mathrm{Kan}}$ and $(\comodx)_{\op E}$, giving rise to a monoidal model category structure (Theorem \ref{thm:comodx}; {see also Definition~\ref{defn:monoidal-model}}).  In particular we obtain an explicit description of the sets of generating cofibrations and generating acyclic cofibrations for $(\comodx)_{\op E}$ (Proposition \ref{prop:cof.gen.comod}).  Moreover,  for $X$ any reduced simplicial set, we construct a  Quillen equivalence between  $(\comodx)_{\op E}$ and 
{$(\modlx)_{\op E}$, the category of pointed $\Omega X$-spaces endowed with the model category structure right-induced from $(\sset)_{\op E}$, realizing the Koszul duality between these homotopy theories (Theorem \ref{thm:koszul}).}

Thanks to all of the good properties satisfied by $(\comodx)_{\op E}$, we can apply Hovey's stabilization machine \cite{hovey-spectra}, obtaining a stable model category structure on the category $\comodsx$ of $\sx$-comodules in the category $\spec$ of symmetric spectra \cite{hss}. {The description of $(\comodx)_{\op E}$ as a left-induced model category structure plays a crucial role in the proof of this result.}

{\begin{thm}\label{thm:stable-intro}[Theorems \ref{thm:stable} and \ref{thm:koszul-stable}]  Let $X$ be a simplicial set and $\op E$ any generalized homology theory.
There are combinatorial, left proper, spectral model category structures $\spec_{\op E}$, $(\comodsx)_{\op E}^{\mathrm{st}}$,  and $(\comodsx)_{\op E}^{\mathrm{left}}$, where the first two are stabilized from $\esset$ and $(\comodx)_{\op E}$, and the third is left-induced from the first.  {The identity functor $\id:(\comodsx)_{\op E}^{\mathrm{st}}\to (\comodsx)_{\op E}^{\mathrm{left}}$ preserves cofibrations and weak equivalences.}

{If $X$ is connected, }there is moreover a Quillen equivalence 
$$\adjunct{(\cat {Mod}_{\Sigma^{\infty}(\Om X)_{+} })_{\op E}^{\mathrm{st}}}{(\comodsx)_{\op E}^{\mathrm{st}}}{}{},$$
where {$(\cat {Mod}_{\Sigma^{\infty}(\Om X)_{+} })_{\op E}^{\mathrm{st}}$} is the model category structure given by stabilizing $(\cat {Mod}_{\Om X})_{\op E}$.

If $X$ is a simplicial {(commutative)} monoid, then $(\comodsx)_{\op E}^{\mathrm{left}}$ admits a {(symmetric)} monoidal structure with respect to which it is a 
monoidal model category satisfying the monoid axiom.
\end{thm}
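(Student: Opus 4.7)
The plan is to deduce each of the four assertions from machinery already in place. For the existence of the three stable model category structures, I would invoke Hovey's stabilization machine \cite{hovey-spectra} applied to the base model categories $\esset$ and $(\comodx)_{\op E}$. Theorem \ref{thm:intro1}, together with the properties stated immediately after it (combinatorial, left proper, simplicial), supplies exactly the hypotheses Hovey requires to produce $\spec_{\op E}$ and $(\comodsx)_{\op E}^{\mathrm{st}}$. For the left-induced structure $(\comodsx)_{\op E}^{\mathrm{left}}$, I would apply the left-induction theorem of \cite{hkrs} (see Appendix \ref{appendix}) along the forgetful functor $\comodsx\to\spec_{\op E}$. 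The acyclicity condition, which is the substantive input to left-induction, should follow by a path-object argument paralleling the one used unstably in the proof of Theorem \ref{thm:intro1}, now carried out levelwise in spectra after spectrum-level fibrant replacement.

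For the comparison of $(\comodsx)_{\op E}^{\mathrm{st}}$ with $(\comodsx)_{\op E}^{\mathrm{left}}$ under the identity, I would compare the generating sets directly. The stabilized cofibrations arise from the generating cofibrations of $(\comodx)_{\op E}$ promoted to spectrum level, and these coincide with the cofibrations lifted from $\spec_{\op E}$ by left-induction, because the cofree comodule functor agrees with the spectrum-level analog of $-\star X$. For weak equivalences, the key point is that a stable $\op E_{*}$-equivalence of $\sx$-comodule spectra, detected by $\Omega$-spectra in $(\comodsx)_{\op E}^{\mathrm{st}}$, forgets to a stable $\op E_{*}$-equivalence of underlying symmetric spectra, which is precisely a weak equivalence of the left-induced structure. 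The Quillen equivalence in the connected case would then be obtained by stabilizing the unstable Quillen equivalence of Theorem \ref{thm:koszul}; Hovey's framework respects Quillen equivalences between sufficiently nice source model categories, so the result transfers essentially formally once one checks that the stabilizations are taken with respect to compatible suspension functors, which they are.

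The main obstacle is the monoidal part when $X$ is a simplicial (commutative) monoid. The smash product of pointed $X_{+}$-comodules given by Theorem \ref{thm:comodx} extends levelwise to $\sx$-comodule spectra, and I would verify the pushout-product axiom for $(\comodsx)_{\op E}^{\mathrm{left}}$ by an adjunction argument reducing it to the corresponding axiom in $\spec_{\op E}$, using that left-induced (acyclic) cofibrations are precisely those whose underlying spectrum maps are (acyclic) cofibrations. The monoid axiom is subtler: one must show that any transfinite composition of pushouts of maps of the form $j\wedge Z$, with $j$ an acyclic cofibration of $\sx$-comodule spectra and $Z$ arbitrary, is a weak equivalence in the left-induced structure. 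Because the forgetful functor preserves smash products and colimits, this should reduce to the monoid axiom in $\spec_{\op E}$, which holds for the standard Bousfield $\op E_{*}$-localization of symmetric spectra. The commutative case then follows by verifying that the symmetry isomorphism of the smash product is compatible with the comodule coactions when $X$ is commutative, so that the symmetry descends to $\comodsx$ without obstruction.
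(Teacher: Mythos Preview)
Your treatment of the stabilized structures $\spec_{\op E}$ and $(\comodsx)_{\op E}^{\mathrm{st}}$ via Hovey's machine, of the Koszul Quillen equivalence via \cite[Theorem 9.3]{hovey-spectra} applied to Theorem \ref{thm:koszul}, and of the monoidal structure via reduction along the strong monoidal forgetful functor to $\espec$ (which is exactly what Proposition \ref{prop:monoid-axiom} packages) all match the paper's approach.

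The genuine gap is in your construction of $(\comodsx)_{\op E}^{\mathrm{left}}$. You propose to verify acyclicity by a path-object argument ``carried out levelwise in spectra after spectrum-level fibrant replacement.'' This is problematic: the dual Quillen path object criterion (Theorem \ref{thm:quillen-path}) requires \emph{cofibrant} replacements and good \emph{cylinder} objects in $\comodsx$, and in the stable structure on $\spec_{\op E}$ not every object is cofibrant, so the hypothesis does not hold trivially as it did unstably. It is not clear how to produce the required cofibrant replacements in $\comodsx$ before the model structure exists. The paper avoids this entirely by a two-step maneuver: first, left-induce at the \emph{projective} level (Proposition \ref{prop:proj-left-ind}), where acyclicity is checked directly from Theorem \ref{thm:mr} using the explicit generating set $\I_{c}$ of Proposition \ref{prop:cof.gen.comod}; second, invoke the general fact (Proposition \ref{prop:left.local}) that left-induction survives left Bousfield localization, because cofibrations are unchanged and weak equivalences only grow. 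This is why the paper insists on both the left-induced and the right-induced descriptions of $(\comodx)_{\op E}$.

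Your comparison of the two stable structures is also somewhat muddled. The sentence ``the cofree comodule functor agrees with the spectrum-level analog of $-\star X$'' conflates $F_{\sx}=-\wedge\sx$ with the retractive-space functor $-\star X$; these are different right adjoints. More importantly, the paper does not claim the cofibrations of the two structures coincide, only that $\id:(\comodsx)_{\op E}^{\mathrm{st}}\to(\comodsx)_{\op E}^{\mathrm{left}}$ is left Quillen. This follows from Lemma \ref{lem:univ-left} once one checks that $U$ sends stable weak equivalences to stable weak equivalences, and the paper does this concretely by observing $U(\mathcal S_{\text{comod}})\subseteq \mathcal S$ from the explicit form of the localizing sets in (\ref{eqn:s}) together with the generators $\I_{c}$.
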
}

Let $\sh$ be the suspension spectrum of a simplicial monoid $H$. {As an immediate consequence of the theorem above,  we obtain an interesting  model category structure $(\algh)_{\mathrm{st}}$ on the category  of $\sh$-comodule algebras,} i.e., ring spectra $\bold R$ endowed with a coaction $\bold R\to \bold R\wedge \sh$, which is a coassociative, counital morphism of ring spectra (Corollary \ref{cor:comod-alg}).  It is therefore possible now to formulate rigorously a notion of the object of\emph{ homotopy coinvariants} of the coaction of $\sh$ on a $\sh$-comodule algebra {(cf.~equation (\ref{eqn:hocoinv}))}, which is an essential element of the definition of a homotopic Hopf-Galois extension of ring spectra, as originally formulated in \cite {rognes} and generalized in \cite{hess:hhg}.  It is only recently that homotopy coinvariants for comodule algebras have been rigorously defined in the chain complex context \cite[Theorem 3.8]{bhkkrs}, { \cite[Theorem 6.5.1]{hkrs}};   such a rigorous formulation was otherwise known only for monoidal model categories where the monoidal product is the categorical product.

\subsection{From comodules to Waldhausen $K$-theory}

Recall that $A(X)$ is defined to be the  $K$-theory spectrum of the Waldhausen category the objects of which are retractive spaces $(Z, i:X\to Z, r:Z\to X)$ such that {$(Z, i, r)$ is weakly equivalent to $(Y, i',r')$ with $Y/X$ finite, i.e., $Y/X$ is a simplicial set with finitely many nondegenerate simplices,} and with cofibrations and weak equivalences created in the underlying category of simplicial sets  \cite{waldhausen}.  For any generalized reduced homology theory $\op E_{*}$, let $A(X; \op E_{*})$ denote the Waldhausen $K$-theory of the same category with the same cofibrations, but replacing the usual weak equivalences of simplicial sets by $\op E_{*}$-equivalences.  Let $(\comodx)_{\op E}^{\mathrm{hf}}$ denote the category of homotopically finite comodules over $X_{+}$, with cofibrations and weak equivalences {inherited from the $\op E_{*}$-localization of $(\comodx)_{\mathrm{Kan}}$.  Define $(\cat {Mod}_{\Om X})_{\op E}^{\mathrm{hf}}$ analogously.}  

Applying \cite[Corollary 3.9]{dugger-shipley} and Lemma \ref{lem:hf} to the Quillen equivalence of Theorem \ref{thm:intro1}, we obtain the following description of $A(X;\op E_{*})$ in terms of comodules over $X_{+}$, {together with a comparison to the more familiar model for $A(X)$ in terms of $\Om X$-modules, thanks to Theorem \ref{thm:koszul}.} Naturality follows from the last part of Theorem \ref{thm:adjunct}.

\begin{thm}\label{thm:A-thy} For any simplicial set $X$ and any generalized reduced homology theory $\op E_{*}$, $(\comodx)_{\op E_{*}}^{\mathrm{hf}}$ is a Waldhausen category, and there {are natural weak equivalences} of $K$-theory spectra
$$A(X;\op E_{*})\xrightarrow {\simeq} K\big( (\comodx)_{\op E}^{\mathrm{hf}}\big)\xleftarrow\simeq K\big( (\cat {Mod}_{\Om X})_{\op E}^{\mathrm{hf}}\big),$$
{where the second equivalence requires connectivity of $X$.}
\end{thm}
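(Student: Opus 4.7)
The plan is to deduce the theorem from the two Quillen equivalences already stated in the excerpt (Theorem \ref{thm:intro1} for the adjunction $-/X \dashv -\star X$ and Theorem \ref{thm:koszul} for the Koszul-dual adjunction between $(\comodx)_{\op E}$ and $(\modlx)_{\op E}$) by invoking the machinery of Dugger–Shipley \cite[Corollary 3.9]{dugger-shipley}, which turns a Quillen equivalence between pointed model categories into a weak equivalence of Waldhausen $K$-theory spectra, provided that the derived adjunction restricts to an equivalence between the subcategories of homotopically finite objects. The role of the auxiliary Lemma \ref{lem:hf} is precisely to verify this restriction, and the role of Theorem~\ref{thm:intro1}'s last clause is to make both weak equivalences natural in $X$.

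First, I would check that $(\comodx)_{\op E_*}^{\mathrm{hf}}$ is a Waldhausen category. Since $(\comodx)_{\op E}$ is a left proper, cofibrantly generated, simplicial model category (Theorem \ref{thm:intro1} and the discussion immediately following it), its full subcategory of cofibrant homotopically finite objects inherits a canonical Waldhausen structure: cofibrations and weak equivalences are those of the ambient model category, pushouts along cofibrations exist and stay cofibrant, and homotopical finiteness is closed under the axioms required by a Waldhausen category. The same argument applies to $(\modlx)_{\op E}^{\mathrm{hf}}$ and recovers the standard Waldhausen structure on $\rx^{\mathrm{hf}}$ whose $K$-theory is $A(X;\op E_*)$ by Waldhausen's original definition.

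Next, for the first weak equivalence I would apply \cite[Corollary 3.9]{dugger-shipley} to the Quillen equivalence
\[
\adjunct{\rx}{\comodx}{-/X}{-\star X}
\]
after localizing both sides at $\op E_*$, as in Theorem~\ref{thm:intro1}. The hypotheses of that corollary require that the derived functors preserve homotopical finiteness in both directions. On one hand, the left adjoint $-/X$ is visibly homotopical-finiteness preserving on the retractive side, since a retractive space $(Z,i,r)$ with $Z/X$ finite is sent to a comodule whose underlying simplicial set is $Z/X$. On the other hand, preservation in the reverse direction is what Lemma \ref{lem:hf} supplies. Combining these with \cite[Corollary 3.9]{dugger-shipley} gives the asserted equivalence $A(X;\op E_*)\xrightarrow{\simeq}K\bigl((\comodx)_{\op E}^{\mathrm{hf}}\bigr)$.

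For the second weak equivalence, I would apply the same Dugger–Shipley principle to the Koszul-duality Quillen equivalence between $(\comodx)_{\op E}$ and $(\modlx)_{\op E}$ of Theorem \ref{thm:koszul}, which is established under the assumption that $X$ is reduced (in particular, connected); this is the source of the connectivity hypothesis in the second arrow. Homotopical finiteness is again preserved by Lemma \ref{lem:hf}, so \cite[Corollary 3.9]{dugger-shipley} yields the equivalence $K\bigl((\comodx)_{\op E}^{\mathrm{hf}}\bigr)\xleftarrow{\simeq} K\bigl((\modlx)_{\op E}^{\mathrm{hf}}\bigr)$. Naturality of both maps in $X$ follows from the last part of Theorem \ref{thm:adjunct}, which provides naturality of the Quillen adjunctions themselves. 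The main technical obstacle I anticipate is packaged into Lemma \ref{lem:hf}: one must show that a cofibrant comodule whose underlying pointed simplicial set is weakly equivalent to a finite one admits a homotopically finite comodule replacement, and symmetrically on the module side. Once that bookkeeping is in place, the theorem is a direct transport of structure across the two Quillen equivalences.
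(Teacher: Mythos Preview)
Your proposal is correct and follows essentially the same argument as the paper, which deduces the theorem by applying \cite[Corollary 3.9]{dugger-shipley} and Lemma~\ref{lem:hf} to the Quillen equivalence of Theorem~\ref{thm:intro1}, together with Theorem~\ref{thm:koszul} for the second arrow and the last part of Theorem~\ref{thm:adjunct} for naturality. One small imprecision: Lemma~\ref{lem:hf} as stated concerns only the adjunction $-/X \dashv -\star X$, so your appeal to it for the Koszul side is not literally justified by that lemma (though you correctly flag that a symmetric check on the module side is needed).
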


As Waldhausen $K$-theory is (up to sign) a stable invariant, Theorem \ref{thm:A-thy} implies that if $X$ is simply connected, then $A(X)$ itself also can be described in terms of comodules over $X_{+}$. In fact, {$A(X) \xrightarrow{\simeq} A(X; {\op H\mathbb Z}_*)$ is a weak equivalence for $X$ simply connected, by Lemma~\ref{lem:A-HZ}. } 

\begin{cor} If $X$ is a simply connected simplicial set, then there is a natural weak equivalence of $K$-theory spectra 
$$A(X)\xrightarrow {\simeq} K\big( (\comodx)_{\op H\mathbb Z}^{\mathrm{hf}}\big).$$
\end{cor}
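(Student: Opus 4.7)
The plan is straightforward: this corollary is an immediate consequence of Theorem \ref{thm:A-thy} combined with Lemma \ref{lem:A-HZ}. First, I would instantiate Theorem \ref{thm:A-thy} with $\op E_{*}=\HZ_{*}$, which yields a natural weak equivalence
$$A(X;\HZ_{*})\xrightarrow{\simeq} K\bigl((\comodx)_{\HZ}^{\mathrm{hf}}\bigr).$$
Then, under the assumption that $X$ is simply connected, I would invoke Lemma \ref{lem:A-HZ} to produce a natural weak equivalence $A(X)\xrightarrow{\simeq}A(X;\HZ_{*})$, and compose the two.

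The only nontrivial ingredient is Lemma \ref{lem:A-HZ}. The underlying idea is classical: between simply connected pointed spaces, a map is a weak homotopy equivalence if and only if it induces an isomorphism on integral homology, by the Hurewicz and Whitehead theorems. When $X$ is simply connected, for any retractive space $(Z, i\colon X\to Z, r\colon Z\to X)$ with $Z/X$ homotopically finite, one may reduce the comparison of weak equivalences to a statement about the (simply connected) cofibres, so that weak equivalences and $\HZ_{*}$-equivalences agree on the relevant class. The two Waldhausen categories defining $A(X)$ and $A(X;\HZ_{*})$ thus share cofibrations and weak equivalences after this reduction, and an application of Waldhausen's approximation theorem (or direct comparison) identifies their $K$-theory spectra.

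I do not expect any substantial obstacle, since the heavy lifting is already packaged into Theorem \ref{thm:A-thy}, which in turn rests on the Quillen equivalence of Theorem \ref{thm:intro1} and the comparison results of \cite{dugger-shipley}. Naturality of the composite equivalence in $X$ is inherited from the naturality clauses of Theorem \ref{thm:A-thy} (traced back to Theorem \ref{thm:adjunct}) together with the evident naturality of the canonical comparison $A(X)\to A(X;\HZ_{*})$, which is the identity on underlying Waldhausen categories and merely enlarges the class of weak equivalences.
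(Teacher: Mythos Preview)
Your main argument is correct and matches the paper exactly: the corollary is deduced by composing Theorem~\ref{thm:A-thy} (specialized to $\op E_{*}=\HZ_{*}$) with Lemma~\ref{lem:A-HZ}.

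However, your parenthetical sketch of how Lemma~\ref{lem:A-HZ} is proved diverges from the paper's argument and contains a gap. You suggest reducing to the ``simply connected cofibres'' and then invoking Hurewicz/Whitehead, but for a general homotopically finite retractive space $(Z,i,r)$ over a simply connected $X$, neither $Z$ nor the cofibre $Z/i(X)$ need be simply connected (e.g., $Z=X\vee S^{1}$). So it is not true that $\HZ_{*}$-equivalences and weak homotopy equivalences coincide on the relevant Waldhausen category, and Waldhausen's approximation theorem does not apply directly. The paper instead uses Waldhausen's fibration theorem~\cite[1.6.4]{waldhausen}: the fibre of $A(X)\to A(X;\HZ_{*})$ is the $K$-theory of the $\HZ_{*}$-acyclic objects with weak homotopy equivalences, and one checks that after one suspension (relative to $X$) every such object becomes homotopically trivial, since Seifert--van Kampen forces $\Sigma_{X}Z$ to be simply connected when $X$ is. Stability of Waldhausen $K$-theory~\cite[1.6.2]{waldhausen} then kills the fibre. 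Since you are only citing Lemma~\ref{lem:A-HZ} rather than reproving it, this does not affect the corollary itself, but you should not present the approximation-theorem route as the content of the lemma.
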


Working with twisted homology  enables us to extend this result to all connected simplicial sets. Let $q: \tX \to X$ be a universal cover of a connected simplicial set $X$, which induces a functor $q^*$ from retractive spaces over $X$ to retractive spaces over $\tX$ (Lemma \ref{lem:pushforward}).  Instead of homology equivalences, we consider maps that induce isomorphisms after applying $\HZ\circ q^*$, which we refer to as $\Hq$-equivalences.  { A result similar to Theorem~\ref{thm:adjunct-intro}}   holds if we replace $\op E_*$-equivalences by $\Hq$-equivalences (Theorem~\ref{thm:twisted}).  In Proposition~\ref{prop:A-Hq} we show moreover that $A(X) \xrightarrow{\simeq} A(X; \Hq)$ is a weak equivalence for any connected simplicial set $X$, so that the following corollary holds. {Note that $\HZ = \Hq$ if $X$ is simply connected.}
 
 \begin{cor}\label{cor.connected} If $X$ is a connected simplicial set, then there is a natural weak equivalence of $K$-theory spectra 
$$A(X)\xrightarrow {\simeq} K\big( (\comodx)_{\Hq}^{\mathrm{hf}}\big).$$
\end{cor}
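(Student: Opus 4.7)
The plan is to follow exactly the same template as Theorem~\ref{thm:A-thy}, substituting the twisted homology theory $\Hq$ for the generic homology theory $\op E_*$, and then bridging $A(X)$ and $A(X;\Hq)$ via Proposition~\ref{prop:A-Hq}.

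First, I would assemble the twisted analogue of the Quillen equivalence $\rx \rightleftarrows \comodx$ of Theorem~\ref{thm:adjunct-intro}, namely Theorem~\ref{thm:twisted}, which states that the adjunction $(-/X) \dashv (-\star X)$ descends to a Quillen equivalence
$$\adjunct{(\rx)_{\Hq}}{(\comodx)_{\Hq}}{-/X}{-\star X}.$$
As in the proof of Theorem~\ref{thm:A-thy}, I would then invoke \cite[Corollary 3.9]{dugger-shipley} together with Lemma~\ref{lem:hf} to pass this Quillen equivalence to the full Waldhausen subcategories of homotopically finite objects, obtaining a natural weak equivalence
$$A(X;\Hq)\xrightarrow{\simeq} K\big((\comodx)_{\Hq}^{\mathrm{hf}}\big).$$
The verification that $(\comodx)_{\Hq}^{\mathrm{hf}}$ is a Waldhausen category with the induced cofibrations and $\Hq$-equivalences proceeds exactly as in the $\op E_*$-local case, since the $\Hq$-local model structure on $\comodx$ is itself built from the same left-induction machinery.

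Next, I would invoke Proposition~\ref{prop:A-Hq}, which provides the natural weak equivalence $A(X)\xrightarrow{\simeq} A(X;\Hq)$ for any connected $X$. This is the genuinely new input here (versus the $\HZ$-local simply connected case in the previous corollary), and it is what allows us to drop the simple connectivity hypothesis: for $X$ not necessarily simply connected, ordinary integral homology equivalences need not detect weak equivalences of retractive spaces over $X$, but $\Hq$-equivalences do, precisely because $q^*$ lifts to the universal cover before applying $\HZ$.

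Composing the two natural weak equivalences yields the desired
$$A(X)\xrightarrow{\simeq} K\big((\comodx)_{\Hq}^{\mathrm{hf}}\big),$$
with naturality inherited from the naturality statement in Theorem~\ref{thm:adjunct}. The main obstacle, and the one piece of genuinely new work, is Proposition~\ref{prop:A-Hq}; everything else is a formal upgrade of the proof of Theorem~\ref{thm:A-thy} from $\op E_*$-local weak equivalences to the twisted homology setting. The $K$-theoretic transport across the Quillen equivalence and the Waldhausen category structure on the homotopically finite comodules follow verbatim from the arguments already used for Theorem~\ref{thm:A-thy}.
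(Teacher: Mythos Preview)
Your proposal is correct and follows essentially the same route as the paper: the corollary is deduced by combining Proposition~\ref{prop:A-Hq} (giving $A(X)\simeq A(X;\Hq)$) with the $\Hq$-analogue of Theorem~\ref{thm:A-thy}, obtained by applying \cite[Corollary 3.9]{dugger-shipley} and Lemma~\ref{lem:hf} to the Quillen equivalence of Theorem~\ref{thm:twisted}. One small caveat on naturality: in the $\Hq$ setting the pushforward $a_*$ is left Quillen only when $a$ lifts to the universal covers (see Remark~\ref{rmk:lift}), so naturality here is slightly more delicate than in the $\op E_*$-local case.
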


{\begin{rmk}  As an easy illustration of the potential utility of the comodule model for $A(X)$,  we observe that $A(S^{1})$ admits {certain algebraic structure maps.}  Any {simplicial} endomorphism $a:S^{1}\to S^{1}$ {lifts to the universal cover, since it is contractible, and therefore} induces a left Quillen functor $a_{*}:(\comodx)_{\Hq}\to (\comodx)_{\Hq}$ (cf.~Theorem \ref{thm:twisted} and Remark \ref{rmk:lift}) and thus also an exact functor of Waldhausen categories 
$$a_{*}:(\comodx)_{\Hq}^{\mathrm{hf}}\to (\comodx)_{\Hq}^{\mathrm{hf}}.$$ 
In particular, the action of $S^{1}$ on itself induces an $S^{1}$-action $A(S^{1})\times S^{1}\to A(S^{1})$.  Similarly, the power maps $\text{sd}^{r} S^{1} \to S^{1}$, where $\text{sd}^{r}$ denotes $r^{\text{th}}$-subdivision, induce ``power maps'' 
$$A(S^{1})\simeq A(\text{sd}^{r} S^{1}) \to A(S^{1}).$$ 
{Of course the existence of such algebraic structure maps is also easy to deduce using the retractive space model for $A(X)$, but the particularly simple description of the pushforward functor on categories of comodules, which does not change the underlying pointed simplicial set, may simplify computations with these maps.}  
\end{rmk}}

{Since Waldhausen $K$-theory is a stable invariant by~\cite[1.6.2]{waldhausen} and~\cite[Propositions 9.31, 9.32]{bgt}, the $K$-theory of the category of comodule spaces agrees with the $K$-theory of its stabilization, the category of comodule spectra which is developed in Section~\ref{sec:stabilization}. } 

{\begin{cor}  If $X$ is a {connected} simplicial set, then there is a natural weak equivalence of $K$-theory spectra 
$$A(X)\xrightarrow {\simeq} K\big( (\comodsx)^{\mathrm{hf}}\big),$$
where $(\comodsx)^{\mathrm{hf}}$ denotes the category of homotopically finite comodules over $\sx$, 
{with cofibrations and weak equivalences inherited from the stabilization of the model structure on $(\comodsx)_{\Hq}$ from Proposition~\ref{prop:stable:twisted}.}
\end{cor}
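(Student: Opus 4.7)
The plan is to derive this corollary from Corollary \ref{cor.connected} by invoking the invariance of Waldhausen $K$-theory under stabilization, which is precisely the role of the references to \cite[1.6.2]{waldhausen} and \cite[Propositions 9.31, 9.32]{bgt} cited in the paragraph immediately preceding the statement. First, by Corollary \ref{cor.connected}, there is a natural weak equivalence $A(X) \xrightarrow{\simeq} K\big((\comodx)_{\Hq}^{\mathrm{hf}}\big)$ for any connected simplicial set $X$, so it suffices to identify the right-hand side with the $K$-theory of the stabilized comodule category.

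Next, the stabilization machinery (as invoked in the proof of Theorem \ref{thm:stable-intro}) applied to the pointed, left proper, simplicial, cofibrantly generated model category $(\comodx)_{\Hq}$ yields the stable model category $(\comodsx)_{\Hq}^{\mathrm{st}}$ of Proposition \ref{prop:stable:twisted}, equipped with a left Quillen suspension spectrum functor
$$\Sigma^{\infty}\colon (\comodx)_{\Hq} \longrightarrow (\comodsx)_{\Hq}^{\mathrm{st}}.$$
Since the cofibrations and weak equivalences on $(\comodsx)^{\mathrm{hf}}$ are by definition those inherited from this stabilization, the functor $\Sigma^{\infty}$ restricts to an exact functor of Waldhausen categories from $(\comodx)_{\Hq}^{\mathrm{hf}}$ to $(\comodsx)^{\mathrm{hf}}$, natural in $X$.

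The key step is then to apply \cite[1.6.2]{waldhausen}, which shows that the suspension operation on a pointed Waldhausen category induces an equivalence on $K$-theory up to sign, iterated in the model-categorical form of \cite[Propositions 9.31, 9.32]{bgt}, to conclude that the induced map
$$K\big((\comodx)_{\Hq}^{\mathrm{hf}}\big) \xrightarrow{\ \simeq\ } K\big((\comodsx)^{\mathrm{hf}}\big)$$
is a natural weak equivalence. Composing with the equivalence of Corollary \ref{cor.connected} then produces the desired natural weak equivalence $A(X)\xrightarrow{\simeq} K\big((\comodsx)^{\mathrm{hf}}\big)$.

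The main technical obstacle, I expect, is verifying cleanly that the notion of \emph{homotopically finite} object on the spectral side matches what is needed for the stability theorems of \cite{waldhausen} and \cite{bgt} to apply. Concretely, one must check that the Waldhausen subcategory $(\comodsx)^{\mathrm{hf}}$, as defined by inheritance from the stabilization, is precisely the image (up to weak equivalence in $(\comodsx)_{\Hq}^{\mathrm{st}}$) of the thick subcategory generated by $\Sigma^{\infty}(\comodx)_{\Hq}^{\mathrm{hf}}$; this is a combination of the definition of homotopical finiteness in a stable model category and the cofinality arguments underlying \cite[Propositions 9.31, 9.32]{bgt}. Once this compatibility is in hand, the rest of the argument is a formal consequence of Corollary \ref{cor.connected} and the cited stability invariance results.
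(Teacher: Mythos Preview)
Your proposal is correct and follows exactly the approach indicated in the paper: the paper's entire argument is the sentence immediately preceding the corollary, which invokes \cite[1.6.2]{waldhausen} and \cite[Propositions 9.31, 9.32]{bgt} to pass from $K\big((\comodx)_{\Hq}^{\mathrm{hf}}\big)$ to $K\big((\comodsx)^{\mathrm{hf}}\big)$, and then appeals to Corollary~\ref{cor.connected}. Your identification of the compatibility of the two notions of homotopical finiteness as the one point requiring care is apt; the paper leaves this implicit.
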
}

\begin{rmk} It is an immediate consequence of  \cite[Corollary 2.8]{blumberg-mandell}, together with Theorem~\ref{thm:comodx} below, that the Waldhausen $K$-theory of a connected, simplicial {commutative} monoid admits a natural, highly structured multiplicative structure, since 
$$-\otimes-:(\comodx)_{\op E}^{\mathrm{hf}}\times (\comodx)_{\op E}^{\mathrm{hf}}\to (\comodx)_{\op E}^{\mathrm{hf}}$$ 
is a biexact functor of Waldhausen categories.  {See also \cite[Section 1.3]{waldhausen.II} and \cite[p. 401]{operations} for earlier discussions of operations on $A(X)$.  }
\end{rmk}

\subsection{Structure of this article}
We begin in Section \ref{sec:retract-comod} by introducing in more detail the categories $\comodx$ and $\rx$, describing in particular the adjunctions that relate them to the category of pointed simplicial sets, $\sset$.  In Section \ref{sec:adjunction}  we construct the adjunction in Theorem \ref{thm:intro1} and prove useful properties of the functors $-/X$ and $-\star X$, such as that both functors preserve $\op E_{*}$-equivalences (Lemmas \ref{lem:star-pres-equiv} and \ref{lem:slash-pres-equiv}).  We also obtain a useful, explicit formula for constructing pullbacks in $\comodx$ (Corollary \ref{cor:pullback}).  

The existence of the $\op E_{*}$-local model category structure on $\comodx$ is established in Section \ref{sec:model-cat}, in two distinct ways: one involving left-induction directly from the $\op E_{*}$-local model category structure on $\sset$ (Theorem \ref{thm:comodx}), the other involving right-induction from the $\op E_{*}$-local model category structure on $\rx$ (Proposition \ref{prop:cof.gen.comod}).  We present both proofs, as they highlight distinct aspects of the model category structure of $\comodx$, all of which come into play in our study of the stable case.  Furthermore, comparison between the two proofs illustrates the utility of left-induction methods for establishing existence of model category structures.  We also prove that weak equivalences of simplicial sets induce Quillen equivalences of the associated categories of either retractive spaces or comodules (Corollary \ref{cor:we}). {We then establish the existence of an additional sort of model category structure on $\rx$ and $\comodx$, which is  defined in terms of twisted homology (Theorem \ref{thm:twisted}).  It is this model category structure that we need in order to obtain a 
model for $A(X)$, when $X$ is not necessarily simply connected.  We conclude the section by showing that a model-category-theoretic Koszul duality holds between comodules over a simplicial set and modules over its loop space (Theorem \ref{thm:koszul}), making explicit the connection between our work and that of Blumberg and Mandell \cite{blumberg-mandell}.}

In Section \ref{sec:stabilization} we establish our stable results, proving the existence of a model category structure on $\comodsx$ {given by} the Hovey stabilizaton \cite{hovey-spectra} of {$(\comodx)_{\op E}$} with respect to smashing with $S^{1}$ and in which both the cofibrations and the weak equivalences are {detected by the forgetful functor to} $\spec$, the stable model category of symmetric spectra \cite{hss} (Theorem \ref{thm:stable}).

In Appendix \ref{appendix} we recall the elements of the theory of left-induced model category structures, then prove three useful general results:  a sort of universal property enabling us to factor Quillen pairs through left-induced model category structures (Lemma \ref{lem:univ-left}), an existence result for model category structures left-induced from left Bousfield localizations (Proposition \ref{prop:left.local}), and a compatibility criterion for monoidal structures and left-induced model category structures (Proposition \ref{prop:monoid-axiom}).

\subsection{Remarks on the genesis of this article}

The process of attempting to prove the existence of a model category structure on $\comodx$, left-induced from (some localization of) the usual Kan model category structure on $\sset$, via the forgetful/cofree comodule adjunction,  led us to ask how to compute pullbacks in $\comodx$.  The construction of these pullbacks became clear only when we realized that they were being created in the category of retractive spaces, via the adjunction that we give here.     We later realized that existence of  the desired model category structure on $\comodx$ could also be established by right-induction from the localized model category structure on $\rx$.

\subsection{Notation and conventions}
\begin{itemize}
\item Let $\cat C$ be a small category, and let $A,B\in \ob \cat C$.  In these notes, the set of morphisms from $A$ to $B$ is denoted $\cat C(A,B)$.  The identity morphism on an object $A$ is often denoted $A$ as well.
\item A terminal (respectively, initial) object in a category is denoted $e$ (respectively, $\emptyset$).
\item If $\adjunct {\cat C}{\cat D}LR$ are adjoint functors, then we denote the natural bijections
$$\cat C(C, RD) \xrightarrow \cong \cat D(LC, D): f \mapsto f^\flat$$
and 
$$ \cat D(LC, D)\xrightarrow \cong \cat C(C, RD): g \mapsto g^\sharp$$
for all objects $C$ in $\cat C$ and $D$ in $\cat D$.
\item If $\M$ is a model category, we denote its classes of weak equivalences, fibrations, and cofibrations by $\WE$, $\Fib$, and $\Cof$, respectively.
\item We denote the categories of simplicial sets and of pointed simplicial sets by $\cat{ sSet}$ and $\sset$ respectively.   

\item For any generalized reduced homology theory $\op E_{*}$, let $\esset$ denote the category of pointed simplicial sets endowed with (a pointed version of) the model structure of Theorem 10.2 in 
\cite{bousfield}, i.e., the weak equivalences are the $\op E_{*}$-homology isomorphisms, while the cofibrations are the levelwise injections.  The classes of weak equivalences, fibrations and cofibrations in $\esset$ are denoted
$$\WE_{\op E}, \Fib_{\op E}, \text{ and } \Cof_{\op E},$$ respectively. Note that the acyclic fibrations in $\esset$ are the same as those in the usual Kan model category structure.
\item The diagonal map of any simplicial set $Y$ is denoted $\Delta_{Y}:Y \to Y\times Y$.
\item We use the same notation for a pointed simplicial set and its underlying unpointed simplicial set.  
\item If $X$ is an unpointed simplicial set, then $X_{+}=X\coprod e$ denotes the pointed simplicial set obtained by adding a disjoint basepoint. {Using the unique morphism $X \to e$, define the map of pointed simplicial sets $\ve:X_{+}\to e_+$.   }
\item For any pointed simplicial set $Y$ with basepoint $y_{0}$, let $\pi_{Y}: Y\times X \to Y \wedge X_{+}$ denote the quotient map. Note that the equivalence class of $(y,x)$ in the quotient $ Y\wedge X_{+}$  is a singleton, unless $y=y_{0}$, in which case the equivalence class is $\{y_{0}\}\times X$.
\end{itemize}

\subsection{Acknowledgements}
 The authors thank Andrew Blumberg, Bill Dwyer, David Gepner, Mike Mandell, Emily Riehl, and John Rognes for useful conversations during this project and a  referee for a helpful suggestion that led to Corollary~\ref{cor.connected}. We are also grateful to Peter May and the University of Chicago for hosting us during the beginning stages of this project. 

\section{Comodules and retractive spaces}\label{sec:retract-comod}

Let $X$ be an unpointed simplicial set. In this section we introduce two categories of simplicial sets endowed with additional structure given in terms of $X$ and establish properties of these structures that we apply later in this article.  In Section \ref{sec:adjunction}, we describe functors that form an adjunction relating these two categories.

\subsection {$X_{+}$-comodules in $\sset$}\label{sec:comodx}
 We denote by $\comodx$ the \emph{category of right $X_{+}$-comodules} in $\sset$ with respect to the smash product of pointed simplicial sets. An object of $\comodx$ is a pointed simplicial set $Y$ equipped with a morphism $\rho: Y\to Y\wedge X_{+}$ of pointed simplicial sets such that 
$$(\rho\wedge X_{+})\rho=\big(Y\wedge (\Delta_{X}\big)_{+})\rho\quad \text{ and }\quad(Y\wedge \ve)\rho=Y.$$ 
A morphism from $(Y, \rho)$ to $(Y', \rho')$ is a morphism $f:Y\to Y'$ of pointed simplicial sets such that $\rho' f=(f\wedge X_{+})\rho$.

\begin{notn}\label{notn:comodx} We say that an object $(Y, \rho)$ of $\comodx$ is \emph{homotopically finite} if $Y$ is homotopically finite as a simplicial set, i.e., $Y$ is weakly equivalent to a simplicial set with only finitely many nondegenerate simplices.  The full subcategory of homotopically finite $X_{+}$-comodules is denoted $(\comodx)^{\mathrm{hf}}$.
\end{notn}

Note that if $(Y,\rho)$ is an $X_{+}$-comodule, where the basepoint of $Y$ is $y_{0}$, then for any $y\not=y_{0}$ we have $\rho(y)=\big[(y,x)\big]=\{(y,x)\}$  because $(Y\wedge \ve)\rho=Y$.  On the other hand, since $\rho$ is a pointed map, $\rho(y_{0})=\{y_{0}\}\times X$, which is the basepoint of $Y\wedge X_{+}$.  In particular, $\rho: Y \to Y\wedge X_{+}$ is a monomorphism in $\sset$.

\begin{rmk}\label{rmk:adjunct1} There is an adjunction
$$\adjunct{\comodx}{\sset}{U}{F_{X_{+}}},$$
where $F_{X_{+}}$ denotes the \emph{cofree right $X_{+}$-comodule functor}, which is specified on objects by $F_{X_{+}}(Y)=\big(Y\wedge X_{+}, Y\wedge (\Delta_{X})_{+}\big)$, and $U$ is the forgetful functor.
\end{rmk}

For any generalized reduced homology theory $\op E_{*}$, a morphism of right $X_{+}$-comodules is said to be an \emph{$\op E_*$-equivalence} if the underlying morphism of pointed simplicial sets is.

The category $\comodx$ admits rich structure, as the next few lemmas illustrate.

{\begin{lem}\label{lem:comodx-bicomplete}The category $\comodx$ is locally presentable.
\end{lem}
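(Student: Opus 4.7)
The plan is to identify $\comodx$ as the Eilenberg--Moore category of coalgebras for an accessible comonad on a locally presentable category, and then appeal to a standard result from the theory of accessible categories. First, the category $\sset$ is itself locally presentable, being the coslice of the presheaf category $\cat{sSet}$ under the terminal object $\Delta^{0}$. The endofunctor $G:=-\wedge X_{+}:\sset\to\sset$ is a left adjoint (its right adjoint is the pointed mapping space $\map_{*}(X_{+},-)$), so it preserves all small colimits and is in particular $\omega$-accessible. Using the canonical isomorphism $(X\times X)_{+}\cong X_{+}\wedge X_{+}$, the natural transformations $Y\wedge\ve:G(Y)\to Y$ and $Y\wedge(\Delta_{X})_{+}:G(Y)\to G^{2}(Y)$ endow $G$ with the structure of a comonad on $\sset$, and unwinding the definitions its category of coalgebras is precisely $\comodx$.

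Next, I would invoke the classical fact that the Eilenberg--Moore category of coalgebras for a comonad on a locally presentable category whose underlying endofunctor preserves filtered colimits is again locally presentable. Equivalently, one can argue directly: by Remark~\ref{rmk:adjunct1} and Beck's comonadicity theorem, the forgetful functor $U:\comodx\to\sset$ is comonadic; since $G$ preserves all colimits, $U$ creates them, so $\comodx$ is cocomplete and $U$ is accessible; and the essentially small class of $\omega$-presentable $X_{+}$-comodules---those whose underlying pointed simplicial set is $\omega$-presentable---generates $\comodx$ under $\omega$-filtered colimits, by lifting a presentation of the underlying pointed simplicial set along the creation-of-colimits property of $U$.

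The principal subtlety is that, in contrast to the situation for monads, local presentability of Eilenberg--Moore categories of comonads is not automatic; it requires an accessibility hypothesis on the underlying endofunctor. In our setting this hypothesis is immediate because $G$ is itself a left adjoint, so the only real work is the bookkeeping to verify comonadicity and the creation of colimits by $U$.
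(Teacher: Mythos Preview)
Your proposal is correct and follows essentially the same approach as the paper: both argue that $\comodx$ is the category of coalgebras for the comonad $-\wedge X_{+}$ on the locally presentable category $\sset$, and that this endofunctor is accessible because it is a left adjoint. The paper simply cites \cite[Proposition A.1]{ching-riehl} for the fact that coalgebras over an accessible endofunctor on a locally presentable category form a locally presentable category, whereas you spell out the comonad structure and sketch an alternative direct argument.
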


We refer the reader to \cite{adamek-rosicky} for a thorough introduction to locally presentable categories, remarking only that, in particular, locally presentable categories are necessarily bicomplete \cite[2.47]{adamek-rosicky}.

\begin{proof} By \cite[Proposition A.1]{ching-riehl}, since $\sset$ is locally presentable, it is enough to show that  the endofunctor $-\wedge X_{+}:\sset \to \sset $ is accessible, which is obvious since it is a left adjoint.
\end{proof}}

\begin{lem}\label{lem:comodx-simplcat}  The category $\comodx$ admits enrichment, tensoring and cotensoring over $\sset$ such that 
$$\adjunct{\comodx}{\sset}{U}{F_{X_{+}}}$$
is an $\sset$-adjunction.
\end{lem}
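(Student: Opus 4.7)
The strategy is to define the tensor explicitly, then obtain the cotensor from the adjoint functor theorem via local presentability from Lemma~\ref{lem:comodx-bicomplete}, and finally verify the enriched adjunction axioms. For $K \in \sset$ and $(Z, \rho_Z) \in \comodx$, I would define the tensor as
$$K \otimes Z := \bigl(K \wedge Z,\ K \wedge \rho_Z\bigr),$$
where the coaction $K \wedge \rho_Z : K \wedge Z \to K \wedge Z \wedge X_+$ inherits coassociativity and counitality from $\rho_Z$ via the symmetric monoidal structure on $\sset$. Bifunctoriality together with the associativity and unit coherences $(K \wedge K') \otimes Z \cong K \otimes (K' \otimes Z)$ and $S^0 \otimes Z \cong Z$ are inherited strictly from the smash product, so $\comodx$ is tensored over $\sset$. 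I would define the enrichment by
$$\map_{\comodx}(Z, W)_n := \comodx(\Delta[n]_+ \otimes Z, W),$$
with simplicial structure and composition induced from the associativity of the tensor; the isomorphism $\comodx(K \otimes Z, W) \cong \sset(K, \map_{\comodx}(Z, W))$ then holds tautologically when $K = \Delta[n]_+$ and extends to all $K$ by writing $K$ as a colimit of simplices, since both sides send colimits in $K$ to limits.

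For the cotensor, I would argue that $K \otimes (-) : \comodx \to \comodx$ preserves all small colimits. Colimits in $\comodx$ are created in $\sset$ because $\comodx$ is the category of coalgebras over the comonad $(-) \wedge X_+$, and this comonad, being a left adjoint (with right adjoint $\map_{\sset}(X_+, -)$), preserves all colimits. Combined with the identification $U(K \otimes (-)) = K \wedge U(-)$ and the fact that $K \wedge (-)$ preserves colimits on $\sset$, this confirms the claim. Since $\comodx$ is locally presentable by Lemma~\ref{lem:comodx-bicomplete}, the adjoint functor theorem for locally presentable categories supplies a right adjoint $W \mapsto W^K$, which is the desired cotensor. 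The universal isomorphism $\comodx(Z, W^K) \cong \sset(K, \map_{\comodx}(Z, W))$ then follows by composing the two adjunctions.

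For the $\sset$-adjunction, the key observation is that $U$ strictly preserves tensors, $U(K \otimes Z) = K \wedge U(Z)$, so $U$ is automatically an $\sset$-functor. For each $n$, the bijection of Remark~\ref{rmk:adjunct1} yields
$$\comodx(\Delta[n]_+ \otimes Z, F_{X_+}(Y)) \cong \sset\bigl(\Delta[n]_+ \wedge U(Z), Y\bigr),$$
naturally in $n$, and these assemble into an isomorphism $\map_{\comodx}(Z, F_{X_+}(Y)) \cong \map_{\sset}(U(Z), Y)$ of simplicial mapping spaces. The main obstacle is the existence of the cotensor, as a direct construction is complicated by the fact that $\map_{\sset}(K, Y)$ carries no canonical $X_+$-coaction inherited from $Y$. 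Invoking local presentability to apply the adjoint functor theorem sidesteps this difficulty entirely, after which the remaining verifications are routine consequences of the tensor lifting the smash product strictly along $U$.
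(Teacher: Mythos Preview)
Your proof is correct and follows essentially the same strategy as the paper: define the tensor explicitly by lifting the smash product along $U$, then obtain the cotensor by an abstract existence argument. The differences are minor but worth noting. The paper writes the enrichment directly as an equalizer of pointed mapping spaces, whereas you define it via $\map_{\comodx}(Z,W)_n = \comodx(\Delta[n]_+ \otimes Z, W)$; these agree. For the cotensor, the paper invokes the dual of Borceux's adjoint lifting theorem \cite[Theorem 4.5.6]{borceux}, using that $U$ is comonadic and $\comodx$ is complete, to lift the right adjoint of $-\wedge K$ from $\sset$ to $\comodx$. Your route via local presentability (Lemma~\ref{lem:comodx-bicomplete}) and the adjoint functor theorem is equally valid and has the advantage of reusing a result already in hand, while the paper's argument more directly exploits the comonadic structure and would go through even without knowing $\comodx$ is locally presentable, provided one knows it is complete.
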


\begin{proof} {The simplicial enrichment of $\comodx $ is defined to be the following equalizer in $\sset$,
$$\map_{\comodx}\big( (Y, \rho), (Y', \rho')\big) = \text{equal}\big( \map_{*}(Y,Y') \egal {\rho'_{*}}{\rho^{*}}\map_{*}(Y,Y'\wedge X_{+})\big),$$
where $\rho'_{*}$ denotes postcomposition by $\rho'$, and for any $g: Y \wedge \Delta[n]_{+}\to Y'$, $\rho^{*}(g)$ denotes the composite
$$Y \wedge \Delta[n]_{+}\xrightarrow{\rho\wedge \Delta[n]_{+}} Y\wedge X_{+}\wedge \Delta[n]_{+}\cong Y\wedge \Delta[n]_{+}\wedge X_{+}\xrightarrow {g\wedge X_{+}}Y'\wedge X_{+}.$$}
 Tensoring of $\comodx$ over $\sset$,
$$- {\totimes} - : \comodx \times \sset \to \comodx,$$ 
is defined as follows. If $K$ is a pointed simplicial set, and $(Y, \rho)$ is an object in $\comodx$, then
$$(Y, \rho)\totimes K=(Y\wedge K, (Y\wedge \tau)(\rho\wedge K)\big),$$
where $\tau: X_{+}\wedge K \xrightarrow \cong K\wedge X_{+}$ is the symmetry isomorphism.

Since $U\big((Y,\rho)\totimes K\big) =Y\wedge K =U(Y, \rho) \wedge K$ for all objects $(Y, \rho)$ in $\comodx$, $U$ is comonadic, and $\comodx$ is complete, the dual of \cite[Theorem 4.5.6]{borceux} implies that $-\totimes K$ admits a right adjoint, natural in $K$, which is the desired cotensor. 
\end{proof}

\begin{rmk}\label{rmk:pushforward} For any morphism $a:X'\to X$ of simplicial sets, there is a \emph{pushforward} functor $a_{*}: \cat{Comod}_{X'_{+}}\to \comodx$, specified on objects by 
$$a_{*}(Y, \rho)=\big (Y, (Y\wedge a_{+})\rho\big).$$
\end{rmk}

\begin{lem}\label{lem.push.pull.comod} For any morphism $a:X'\to X$ of simplicial sets, the pushforward functor $a_{*}: \comod_{X'_{+}}\to \comodx$ admits a right adjoint $a^{*}:\comodx \to \comod_{X'_{+}}$.  
\end{lem}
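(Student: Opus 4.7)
The approach is to apply the adjoint functor theorem for locally presentable categories. The argument proving Lemma~\ref{lem:comodx-bicomplete} applies verbatim with $X'$ in place of $X$ to show that $\cat{Comod}_{X'_{+}}$ is locally presentable. Since any colimit-preserving functor between locally presentable categories admits a right adjoint (cf.~\cite{adamek-rosicky}), it suffices to verify that $a_{*}$ preserves all small colimits.

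To establish colimit preservation, I would first observe that the forgetful functors $U:\cat{Comod}_{X'_{+}}\to \sset$ and $U:\comodx \to \sset$ each create small colimits: each is left adjoint to the corresponding cofree comodule functor (Remark~\ref{rmk:adjunct1}), and the required coaction on $\colim UD$ for any small diagram $D$ of comodules is uniquely determined by the universal property of the colimit in $\sset$, using that smashing with a pointed simplicial set is cocontinuous. Because $U\circ a_{*}=U$ by the definition of pushforward in Remark~\ref{rmk:pushforward}, the underlying simplicial sets of $a_{*}(\colim D)$ and of $\colim(a_{*}\circ D)$ coincide for every small diagram $D$ in $\cat{Comod}_{X'_{+}}$. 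A direct comparison then shows that the two induced $X_{+}$-coactions agree, each being uniquely characterized by compatibility with the canonical cocone and with the identity $a_{*}\rho_{j}=(UD(j)\wedge a_{+})\rho_{j}$ for $j$ in the indexing category. Hence $a_{*}$ preserves colimits, and the desired right adjoint $a^{*}$ exists.

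A concrete model for $a^{*}$ is also available as a cotensor product: for $(Y,\rho)\in \comodx$, one may take $a^{*}(Y,\rho)$ to be the equalizer in $\sset$ of the two maps $Y\wedge X'_{+}\rightrightarrows Y\wedge X_{+}\wedge X'_{+}$ given respectively by $\rho\wedge X'_{+}$ and by $Y$ smashed with the composite $X'_{+}\xrightarrow{(\Delta_{X'})_{+}} X'_{+}\wedge X'_{+}\xrightarrow{a_{+}\wedge X'_{+}} X_{+}\wedge X'_{+}$, equipped with the right $X'_{+}$-coaction induced by $Y\wedge(\Delta_{X'})_{+}$. The main technical subtlety is the verification that $a_{*}$ is cocontinuous, which is immediate once one uses that comodule forgetful functors create colimits whenever the defining comonad preserves them, so no genuine obstacle arises.
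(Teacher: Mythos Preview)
Your proof is correct, but takes a different route from the paper. The paper constructs $a^{*}$ directly by an explicit pullback: for $(Y,\rho)\in\comodx$, it defines $a^{*}Y$ as the pullback of $\rho:Y\to Y\wedge X_{+}$ along $Y\wedge a_{+}:Y\wedge X'_{+}\to Y\wedge X_{+}$ in $\sset$, then observes that the cofree $X'_{+}$-coaction on $Y\wedge X'_{+}$ restricts to $a^{*}Y$ via the monomorphism $j_{\rho}:a^{*}Y\hookrightarrow Y\wedge X'_{+}$, and verifies the adjunction by hand. Your primary argument instead invokes the adjoint functor theorem for locally presentable categories, reducing the question to colimit preservation of $a_{*}$, which you deduce from the identity $U\circ a_{*}=U$ and creation of colimits by the forgetful functors. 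This is clean and requires no computation. Your supplementary cotensor-product description via the equalizer of $\rho\wedge X'_{+}$ and $Y\wedge\big((a_{+}\wedge X'_{+})(\Delta_{X'})_{+}\big)$ is in fact equivalent to the paper's pullback: since $\rho$ is a split monomorphism, the pullback along $\rho$ embeds into $Y\wedge X'_{+}$, and the image is exactly your equalizer. The paper's approach has the advantage of yielding the explicit pullback formula immediately, which is used concretely later (for instance in the proof of Theorem~\ref{thm:twisted}, where one must compute $q^{*}q_{*}$ on generating acyclic cofibrations); your abstract argument establishes existence efficiently but would still need the explicit model for those applications, which you do supply.
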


\begin{proof} On objects, $a^*(Y, \rho)= (a^*Y, \rho')$, where $a^*Y$ fits into the pullback diagram
$$\xymatrix{a^*Y \ar [d]\ar [r]^(0.4){j_{\rho}}& Y \sm X'_+ \ar [d]^{Y \sm a_+}\\
Y\ar[r]^(0.4){\rho}& Y \sm X_+}$$
 in $\sset$.  Since $\rho$ is a monomorphism, $j_{\rho}$ is as well, so that we can view $a^{*}Y$ as a subspace of $Y\sm X'_{+}$.  It is simple to check that the cofree $X'_{+}$-comodule structure on $Y\wedge X'_{+}$ restricts to a $X'_{+}$-comodule structure $\rho'$ on $a^{*}Y$ and that  this is indeed the right adjoint to $a_*$.
\end{proof}

When $X$ is endowed with the structure of a simplicial monoid, i.e., with an associative multiplication $\mu: X\times X \to X$ that is unital with respect to a basepoint $x_{0}$, the category $\comodx$ admits a monoidal structure.

\begin{lem}\label{lem:comodx-monoid}  If $(X,\mu, x_{0})$ is a simplicial  monoid, then the smash product of pointed simplicial sets lifts to a monoidal product $\otimes$ on $\comodx$, which is symmetric if $\mu$ is commutative, with unit $(S^{0},\rho_{u})$, where 
$$\rho_{u}: S^{0}\to S^{0}\wedge X_{+} \cong X_{+}$$
is specified by $\rho_{u}(0)= +$ and $\rho_{u}(1)=x_{0}$.  Moreover, $\big(\comodx, \otimes, (S^{0},\rho_{u})\big)$ is a monoidal category, which is symmetric if $\mu$ is commutative.
\end{lem}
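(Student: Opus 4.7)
The plan is to lift the monoidal structure $(\sset, \wedge, S^{0})$ to $\comodx$ by using the monoid multiplication on $X$ to combine coactions diagonally, and then to deduce all coherence conditions from those already holding in $\sset$ together with the axioms for $(X, \mu, x_{0})$.

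First, I would define the tensor product on objects: for $(Y, \rho_{Y}), (Y', \rho_{Y'}) \in \comodx$, set $(Y,\rho_{Y}) \otimes (Y',\rho_{Y'}) = (Y \wedge Y', \rho)$, where $\rho$ is the composite
$$Y \wedge Y' \xrightarrow{\rho_{Y} \wedge \rho_{Y'}} Y \wedge X_{+} \wedge Y' \wedge X_{+} \xrightarrow{Y \wedge \tau \wedge X_{+}} Y \wedge Y' \wedge X_{+} \wedge X_{+} \xrightarrow{Y \wedge Y' \wedge \mu_{+}} Y \wedge Y' \wedge X_{+},$$
in which $\tau$ is the symmetry isomorphism of $\sset$ and $\mu_{+}: X_{+} \wedge X_{+} \cong (X \times X)_{+} \to X_{+}$ is induced by $\mu$. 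On morphisms, $f \otimes f' = f \wedge f'$; checking compatibility with the coactions is a direct naturality computation. Coassociativity and counitality of $\rho$ reduce, via the symmetric monoidal structure of $\sset$, to associativity of $\mu$ and the unit axiom for $x_{0}$, respectively.

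Second, I would verify that the associator and unitors of $(\sset, \wedge, S^{0})$ promote to morphisms in $\comodx$. For the associator $\alpha_{Y,Y',Y''}: (Y \wedge Y') \wedge Y'' \xrightarrow{\cong} Y \wedge (Y' \wedge Y'')$, the required square commutes by naturality of $\tau$ and of $\alpha$ in $\sset$ combined with associativity of $\mu_{+}$ (which follows from associativity of $\mu$). For the unitors, one verifies that with $\rho_{u}(1) = x_{0}$, the diagonal coaction on $S^{0} \wedge Y$ reduces, via $\mu(x_{0}, -) = \mathrm{id}$, to $\rho_{Y}$ under the canonical isomorphism $S^{0} \wedge Y \cong Y$, and symmetrically on the right. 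The pentagon and triangle axioms for $(\comodx, \otimes, (S^{0}, \rho_{u}))$ are then automatic, since $U : \comodx \to \sset$ is faithful and sends the structural morphisms to the corresponding coherence isomorphisms in $(\sset, \wedge, S^{0})$, which already satisfy the axioms. When $\mu$ is commutative, the symmetry isomorphism of $\sset$ gives the required symmetry on $\comodx$ after checking compatibility with $\rho$, which follows from commutativity of $\mu_{+}$.

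The main obstacle, such as there is one, is bookkeeping: one must carefully insert the symmetry $\tau$ in the right places when defining $\rho$ and when verifying the associator square, since the combined coaction involves transporting two copies of $X_{+}$ past $Y'$ and then multiplying them. Once this is set up, every verification reduces to an instance of either the coherence axioms of $(\sset, \wedge, S^{0})$ or the monoid (respectively, commutative monoid) axioms for $(X, \mu, x_{0})$, so no genuinely new calculation is needed.
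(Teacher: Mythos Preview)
Your proposal is correct and follows essentially the same approach as the paper: define the coaction on $Y \wedge Y'$ by applying $\rho_{Y} \wedge \rho_{Y'}$, rearranging the $X_{+}$ factors together, and then multiplying via $\mu_{+}$, with unit $(S^{0}, \rho_{u})$ and all coherence inherited from $(\sset, \wedge, S^{0})$. The paper's proof is somewhat terser, leaving the associator/unitor compatibility and pentagon/triangle checks implicit, whereas you spell out that these reduce, via faithfulness of the forgetful functor, to the known coherence in $\sset$ together with the monoid axioms for $(X, \mu, x_{0})$.
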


\begin{proof}  If $(Y, \rho)$ and $(Y', \rho')$ are right $X_{+}$-comodules, let $\rho*\rho'$ denote the composite
{\small $$Y\wedge Y'\xrightarrow {\rho\wedge \rho'} (Y\wedge X_{+})\wedge (Y'\wedge X_{+})\cong (Y\wedge Y')\wedge (X\times X)_{+}\xrightarrow {(Y\wedge Y')\wedge \mu_{+}} (Y\wedge Y')\wedge X_{+},$$}
and let 
$$(Y, \rho)\otimes (Y', \rho')=(Y\wedge Y', \rho*\rho').$$  
It is easy to check $(Y\wedge Y', \rho*\rho')$ is indeed a right $X_{+}$-comodule and that 
$$(Y, \rho)\otimes (S^{0},\rho_{u})\cong (Y, \rho)$$ 
for all $(Y, \rho)$.  Moreover, it is obvious that $(Y, \rho)\otimes (Y', \rho')\cong (Y', \rho')\otimes (Y, \rho)$ if $\mu$ is commutative, since the smash product of pointed simplicial sets is symmetric. That $\big(\comodx, \otimes, (S^{0},\rho_{u})\big)$ satisifies all of the axioms of a
monoidal category, which is symmetric if $\mu$ is commutative, then follows immediately from the fact that $(\sset, \wedge, S^{0})$ does. 
\end{proof}

\begin{rmk} For any simplicial  monoid $(X,\mu,x_{0})$,  the monoidal structure on $\comodx$ can also be described as the composite of two functors, as follows.
 For any pair of simplicial sets $X$ and $X'$, there is an \emph{external product} functor 
 $$-\widetilde \times -:\comodx \times \cat{Comod}_{X'_{+}} \to \cat{Comod}_{(X\times X')_{+}},$$ specified on objects by 
$$(Y,\rho)\widetilde\times  (Y', \rho')= \big(Y \wedge Y', \tau(\rho\wedge \rho')\big),$$ 
where $\tau: (Y\wedge X_{+})\wedge (Y'\wedge X'_{+})\xrightarrow \cong (Y\wedge Y')\wedge (X\times X')_{+}$ is the symmetry isomorphism. 
If $(X,\mu,x_{0})$ is a simplicial monoid, then $(Y, \rho)\otimes (Y',\rho')= \mu_{*}\big((Y,\rho)\widetilde\times  (Y', \rho')\big),$ where $\mu_{*}: \cat {Comod}_{(X\times X)_{+}}\to \comodx$ is the pushforward functor of Remark \ref{rmk:pushforward}.
\end{rmk}

\subsection{Retractive spaces over $X$}

For any category $\C$ and any object $X$ of $\C$, let $\rx(\C)$ denote the category of retractive objects over $X$.   An object of $\rx(\C)$ is an object $Z$ of $\C$ equipped with a pair of morphisms $i:X\to Z$ and $r:Z\to X$ such that $r i = \id_{X}$.  A morphism from $(Z,i,r)$ to $(Z', i',r')$ is a morphism $f:Z\to Z'$ of simplicial sets such that $f i =i'$ and $r'f =r$.  Note that $\rx(\C)$ is a pointed category, with initial/terminal object $(X, \id_{X}, \id_{X})$.

\begin{notn}\label{notn:rx} When $X$ is a simplicial set, we simplify notation, letting $\rx= \rx(\usset)$, the objects of which we call \emph{retractive spaces}. For any retractive space $(Z,i,r)$, let 
$$p_{i}:Z\to Z/i(X):z\mapsto [z]$$ 
denote the natural simplicial quotient map.

A retractive space $(Z,i,r)$ is said to be \emph{homotopically finite} if $Z/i(X)$ is weakly equivalent to a simplicial set with only finitely many nondegenerate simplices. We denote the full subcategory of homotopically finite objects by $(\rx)^{\mathrm{hf}}$.
\end{notn}

\begin{rmk}\label{rmk:adjunct2}
There is an adjunction
$$\adjunct{\rx}{\sset}{V}{\retx},$$
where $\retx: \sset \to \rx$ sends a pointed simplicial set $Y$ with basepoint $y_{0}$ to $(Y\times X, i_{y_{0}}, \operatorname{proj}_{2})$, where 
$$i_{y_{0}}: X \to Y\times X: x\mapsto (y_{0},x).$$  
Its left adjoint $V: \rx\to \sset$ sends a retractive space $(Z,i,r)$ to the pointed simplicial set $Z/i(X)$, where the basepoint is the equivalence class $i(X)$ of all points in the image of $i$.   {Since $\sset\cong \cat R_{\{*\}}$, this adjunction is in fact a special case of that recalled below in Lemma \ref{lem:pushforward}, induced by the unique map $X \to \{*\}$.}
\end{rmk}

\begin{rmk}\label{rmk:pointed} For any simplicial set $X$, let  $\cat{sSet}/X$ denote the corresponding over-category. The adjunction  $V\dashv \retx$ is in fact nothing but the pointed version, in the sense of \cite[Proposition 1.3.5]{hovey},  of the adjunction $$\adjunct{\cat{sSet}/X}{\cat{sSet}} {U_{X}}{P_{X}},$$
where $U_{X}$ is the forgetful functor, and $P_{X}(W)=(W\times X, \operatorname{proj}_{2})$.
\end{rmk}

\begin{rmk}\label{rmk:not-comonadic} The adjunction $V\dashv \retx$ is not comonadic, as the functor underlying the associated comonad on $\sset$ is $-\wedge X_{+}$, for which the category of coalgebras is $\comodx$.  In Theorem \ref{thm:adjunct} we clarify the relationship between $\rx$ and $\comodx$ and observe that they are equivalent as categories if  and only if $X=\{*\}$ (Remark \ref{rmk:equivalence}).
\end{rmk}

For any generalized reduced homology theory $\op E_{*}$, a morphism $f:(Z,i,r)\to (Z',i',r')$ of retractive spaces is said to be an \emph{$\op E_*$-equivalence} if $f:\big(Z,i(x_{0})\big)\to \big(Z', i'(x_{0})\big)$ is an $\op E_*$-equivalence of pointed simplicial sets, for any choice of basepoint $x_{0}\in X$.

The following simple observations turn out to be quite useful for the computations we need to do.

\begin{lem}\label{lem:E-split} Let $\op E_{*}$ be a generalized reduced homology theory.  For any  retractive space $(Z,i,r,)$ over $X$,  and any choice of basepoint in $X$, there is a natural isomorphism of graded abelian groups 
$$\op E_*(Z) \cong \op E_*\big(Z/i(X)\big)\oplus \op E_*(X).$$
\end{lem}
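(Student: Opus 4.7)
The plan is to exploit the cofiber sequence associated with $i$ and the splitting provided by $r$. Fix a basepoint $x_{0}\in X$, so that $X$, $Z$ (with basepoint $i(x_{0})$), and $Z/i(X)$ (with basepoint the collapsed image of $i$) become pointed simplicial sets, and $i$ and $r$ become basepoint-preserving maps.

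First I would observe that $i:X\to Z$ is necessarily a monomorphism of simplicial sets, since $ri=\id_{X}$ forces it to be split injective; in particular $i$ is a cofibration in $\ksset$, so that the cofibration
$$X\xrightarrow{i}Z\xrightarrow{p_{i}} Z/i(X)$$
is a model for the homotopy cofiber. Applying the generalized reduced homology theory $\op E_{*}$ then yields the Puppe long exact sequence
$$\cdots\to \op E_{n+1}\big(Z/i(X)\big)\to \op E_{n}(X)\xrightarrow{i_{*}} \op E_{n}(Z)\xrightarrow{(p_{i})_{*}} \op E_{n}\big(Z/i(X)\big)\to \op E_{n-1}(X)\to\cdots.$$

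Next I would use the retraction to split this sequence. Since $ri=\id_{X}$, functoriality of $\op E_{*}$ gives $r_{*}i_{*}=\id$, so $i_{*}$ is a split monomorphism. A standard diagram chase shows that the connecting homomorphism $\op E_{n+1}\big(Z/i(X)\big)\to \op E_{n}(X)$ must then vanish, so the long exact sequence collapses into short exact sequences
$$0\to \op E_{n}(X)\xrightarrow{i_{*}} \op E_{n}(Z)\xrightarrow{(p_{i})_{*}} \op E_{n}\big(Z/i(X)\big)\to 0$$
that are split by $r_{*}$. The desired isomorphism $\op E_{*}(Z)\cong \op E_{*}\big(Z/i(X)\big)\oplus \op E_{*}(X)$ is induced by $\big((p_{i})_{*}, r_{*}\big)$, with inverse the sum of a chosen section of $(p_{i})_{*}$ and $i_{*}$; or, more canonically, one simply records the splitting via $r_{*}$ as a retraction of $i_{*}$.

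Finally, for naturality, I would check that every morphism $f:(Z,i,r)\to (Z',i',r')$ of retractive spaces induces a commutative diagram between the split short exact sequences above, which is immediate from $f\circ i=i'$ and $r'\circ f=r$ together with the functoriality of the cofiber construction. No step here is really an obstacle: once the cofibration status of $i$ is noted, everything else is formal from the long exact sequence and the split monomorphism $i_{*}$.
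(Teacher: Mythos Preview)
Your proof is correct and follows exactly the same approach as the paper: the paper simply observes that $X\xrightarrow{i}Z\xrightarrow{p_i}Z/i(X)$ is a split cofiber sequence and concludes the splitting in $\op E_*$ immediately, while you spell out the underlying long exact sequence argument in detail. There is no substantive difference.
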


\begin{proof} If $(Z,i,r)$ is a retractive space, then 
$$\xymatrix{ X \ar @<0.8ex>[r]^{i}& Z  \ar @<0.8ex>[l]^{r}\ar [r]^{p_{i}} &Z/i(X)}$$
is a split cofiber sequence, whence
$$\op E_*(Z) \cong \op E_*\big(Z/i(X)\big)\oplus \op E_*(X)$$
as graded abelian groups.
\end{proof}

{The folllowing fact is quite useful later in this article.}

\begin{lem}\label{lem:lim-rx} Let $\C$ be a bicomplete category. Pullbacks {and pushouts} in
$\rx(\C)$ exist and are created in $\C$.
\end{lem}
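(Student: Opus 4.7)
The plan is to show that the forgetful functor $\rx(\C) \to \C$ creates both pullbacks and pushouts by a direct universal-property argument, so the body of the proof is essentially a diagram chase. I would handle pullbacks first and then obtain pushouts by the same method dualized.

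For a cospan $(Z_1,i_1,r_1) \xrightarrow{f_1} (Z,i,r) \xleftarrow{f_2} (Z_2,i_2,r_2)$ in $\rx(\C)$, I would first form the pullback $P$ of $f_1,f_2$ in $\C$, with projections $\pi_j \colon P \to Z_j$. The two candidate maps $i_1 \colon X \to Z_1$ and $i_2 \colon X \to Z_2$ satisfy $f_1 i_1 = i = f_2 i_2$ (since morphisms in $\rx(\C)$ commute with the inclusions), so by the universal property of the pullback in $\C$ they induce a unique $\iota \colon X \to P$ with $\pi_j \iota = i_j$. For the retraction I would set $\pi \coloneqq r_1 \pi_1 = r_2 \pi_2$, where the two expressions agree because $r_j f_j = r$ for $j=1,2$ and both factor as $r$ composed with the common map $P \to Z$. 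The identity $\pi \iota = r_1 \pi_1 \iota = r_1 i_1 = \id_X$ then gives $(P,\iota,\pi) \in \rx(\C)$.

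To check that $(P,\iota,\pi)$ is the pullback in $\rx(\C)$, given any $(W,j,s) \in \rx(\C)$ with morphisms $g_k \colon (W,j,s) \to (Z_k,i_k,r_k)$ satisfying $f_1 g_1 = f_2 g_2$, the underlying maps induce a unique $g \colon W \to P$ in $\C$ with $\pi_k g = g_k$. It remains to check that $g$ is a morphism in $\rx(\C)$, that is, $g j = \iota$ and $\pi g = s$: the first holds because $\pi_k g j = g_k j = i_k = \pi_k \iota$ for both $k$, whence $gj = \iota$ by uniqueness in the pullback; the second is immediate from $\pi g = r_1 \pi_1 g = r_1 g_1 = s$.

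For pushouts I would run the dual argument: given a span $(Z_1,i_1,r_1) \xleftarrow{f_1} (Z,i,r) \xrightarrow{f_2} (Z_2,i_2,r_2)$, form the pushout $P$ in $\C$ with coprojections $\iota_j \colon Z_j \to P$, define $\iota \coloneqq \iota_1 i_1 = \iota_2 i_2$ (equal because $\iota_j f_j$ agree and $f_j i = i_j$), and define $\pi \colon P \to X$ via the universal property of the pushout applied to $r_1, r_2$ (which satisfy $r_j f_j = r$). The identity $\pi \iota = \pi \iota_1 i_1 = r_1 i_1 = \id_X$ gives the retractive structure, and the universal property in $\rx(\C)$ is verified exactly as above. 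There is no real obstacle here; the only minor subtlety is the repeated use of the relations $f_j i = i_j$ and $r_j f_j = r$ to see that the putative maps into and out of $X$ are well defined, which one should flag carefully but otherwise the result is a formal consequence of bicompleteness of $\C$.
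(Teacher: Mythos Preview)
Your proposal is correct and follows essentially the same approach as the paper's own proof: form the (co)limit in $\C$, equip it with the obvious inclusion and retraction, and verify the universal property directly. One minor notational slip: in the pullback case the compatibility relation for retractions reads $r f_j = r_j$ (not $r_j f_j = r$), though your subsequent explanation that both $r_j\pi_j$ factor as $r$ composed with the common map $P \to Z$ is the correct argument.
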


\begin{proof}  Let $f: (Z',i',r') \to (Z,i,r)$ and $g: (Z'', i'', r'') \to (Z,i,r)$ be morphisms of retractive objects with a common codomain.  Let $Z'\times _{Z}Z''$ denote the usual pullback of $g$ along $f$ in $\C$.  Let
$$\hat \imath=(i',i''):X\to Z'\times _{Z}Z''$$ 
and 
$$\hat r= r'\circ \operatorname {proj}_{1}=:r''\circ \operatorname {proj}_{2}: Z'\times _{Z}Z'' \to X$$  
Note that $r'(z')= rf(z')=rg(z'')=r''(z'')$ for all $(z',z'')\in Z'\times _{Z}Z''$ and that $\hat r\hat\imath=\id_{X}$.

An easy calculation shows that both projection maps 
$$\operatorname {proj}_{1}:(Z'\times _{Z}Z'', \hat \imath, \hat r)\to (Z', i', r')\quad\text { and }\quad \operatorname {proj}_{2}:(Z'\times _{Z}Z'', \hat \imath, \hat r)\to (Z'', i'', r'')$$ 
are morphisms of retractive spaces and that $(Z'\times _{Z}Z'', \hat \imath, \hat r)$ satisfies the required universal property. 

{Now let $f: (Z,i,r) \to (Z',i',r')$ and $g: (Z, i, r) \to (Z'',i'',r'')$ be morphisms of retractive spaces with a common domain. Let $\widehat Z=Z'\coprod _{Z}Z''$ denote the usual pushout of $g$ along $f$ in $\cat{sSet}$. Let $\hat \imath: X \to Z'\coprod _{Z}Z''$ denote the composite $$X \xrightarrow i Z \to Z'\coprod _{Z}Z''$$ and $\hat r =r'+r'':  Z'\coprod _{Z}Z'' \to X$.  It is easy to check that $(\widehat Z, \hat \imath, \hat r)$ is a retractive space over $X$ such that the canonical maps $Z' \hookrightarrow \widehat Z$ and $Z'' \hookrightarrow \widehat Z$ are morphisms of retractive spaces.  It is then clear that $(\widehat Z, \hat \imath, \hat r)$ satisfies the required universal property.}
\end{proof}

\begin{ex}
In particular, the {object in $\C$} underlying a pullback of two morphisms 
$$k:(Z,i,r)\to \retx (B)\quad\text { and }\quad \retx(p):\retx (E)\to \retx (B)$$ 
in $\rx$ is $Z\times _{B}E$,  formed by pulling back $\operatorname{proj}_{1}\circ f: Z \to B$ along $p:E\to B$ since
$$Z\times _{B\times X}(E\times X)\cong Z\times _{B}E.$$
\end{ex}

More generally, it is true that if {$\C$ is bicomplete, then  $\rx(\C)$ is bicomplete, since it is a slice category of a bicomplete category, but not all limits and colimits are created in $\C$.}  

Finally, we recall the well known construction of the adjunction between categories of retractive objects that is induced by a morphism \cite{waldhausen}, as a special case of a general categorical construction, which we need later in this paper.  

\begin{lem}\label{lem:pushforward}  Let $\C$ be a category closed under pushouts and pullbacks. For any morphism $a: X'\to X$ in $\cat C$, there is a  pair of adjoint functors 
$$\adjunct{\cat {R}_{X'}(\C)}{\rx(\C),}{a_{*}}{a^{*}}$$
which are specified on objects by $a_{*}(Z',i',r')=(a_{*}Z', a_{*}i', a_{*}r')$, where
$$\xymatrix{X' \ar [d]_{i'}\ar [r]^{a}& X\ar [d]^{a_{*}i'}\\
Z'\ar[r]^{\bar a}& a_{*}Z'}$$
is a pushout diagram in $\C$, and $a_{*}r': a_{*}Z' \to X$ is the unique morphism induced by $ar':Z'\to X$ and $\id _{X}:X\to X$, and  $a^{*}(Z,i,r)=(a^{*}Z, a^{*}i, a^{*}r)$, where
$$\xymatrix{a^{*}Z \ar [d]_{a^{*}r}\ar [r]^{\hat a}& Z\ar [d]^{r}\\
X'\ar[r]^{a}& X}$$
is a pullback diagram in $\C$, and $a^{*}i: X' \to a^{*}Z$ is the unique morphism induced by $ar':Z'\to X$ and $\id _{X}:X\to X$.

{If colimits in $\C$ are stable under base change (e.g., if $\C= \cat {Set}^{\cat D}$ for some small category $\D$), then $a^{*}$ preserves pushouts.}
\end{lem}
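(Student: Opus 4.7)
The proof is essentially a sequence of universal-property checks, so I would proceed as follows. First, I would verify that $a_*$ and $a^*$ are well-defined functors, starting with the objects. For $a_*(Z',i',r')$, the pushout of $i'$ along $a$ gives the object $a_*Z'$ together with its structure map $a_*i':X \to a_*Z'$; the retraction $a_*r'$ is then obtained by the universal property of the pushout, applied to the pair $(ar',\id_X)$, which agree on $X'$ since $ar'i'=a=\id_X \circ a$. The identity $a_*r'\circ a_*i' = \id_X$ falls out of the universal property. Dually, for $a^*(Z,i,r)$, the pullback of $r$ along $a$ gives $a^*Z$ with retraction $a^*r$; the inclusion $a^*i$ comes from the universal property of the pullback applied to $(\id_{X'},ia)$, which agree after composing with $r$ and $a$ respectively. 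Functoriality on morphisms is then standard, again using the universal property of pushout (respectively pullback).

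Next, I would establish the adjunction bijection $\rx(\C)\bigl(a_*(Z',i',r'),(W,j,s)\bigr)\cong \cat{R}_{X'}(\C)\bigl((Z',i',r'),a^*(W,j,s)\bigr)$. The plan is to unpack both sides. A morphism $g$ on the left is, by the universal property of the pushout, equivalent to a pair consisting of $g_1:Z'\to W$ and $g_2:X\to W$ with $g_1 i'=g_2 a$; the condition $g\circ a_*i'=j$ forces $g_2=j$, and the condition $s\circ g=a_*r'$ reduces, after precomposing with $\bar a$, to $sg_1=ar'$. Thus a morphism on the left corresponds exactly to a map $g_1:Z'\to W$ satisfying $g_1 i'=ja$ and $sg_1 = ar'$. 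Similarly, a morphism $h$ on the right corresponds, by the universal property of the pullback, to a pair $(h_1,h_2)$ with $sh_2=ah_1$; the retraction condition forces $h_1=r'$, and the inclusion condition forces $h_2 i'=ja$. Hence morphisms on the right correspond to maps $h_2:Z'\to W$ with the same two equations as for $g_1$, giving the desired bijection. Naturality in both variables is automatic, since everything is produced from universal properties.

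For the final claim, that $a^*$ preserves pushouts when colimits in $\C$ are stable under base change, I would combine Lemma~\ref{lem:lim-rx} with the hypothesis. By Lemma~\ref{lem:lim-rx}, pushouts in $\rx(\C)$ are computed in $\C$, and likewise for $\cat{R}_{X'}(\C)$. The underlying object of $a^*(Z,i,r)$ is the pullback $a^*Z = X'\times_X Z$, and stability of colimits under base change means precisely that the functor $X'\times_X(-):\C/X\to \C/X'$ preserves colimits. Thus $a^*$ preserves the pushout on underlying objects, and inspection of the retraction and inclusion maps (which are induced from the corresponding maps on the summands via universal property) shows that the resulting retractive structure is indeed the pushout in $\cat{R}_{X'}(\C)$.

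The bookkeeping is routine throughout, but the step requiring the most attention is the adjunction bijection: it is important to verify that the same two equations $g_1 i' = ja$ and $s g_1 = ar'$ emerge from translating the retractive-morphism conditions through both the pushout and the pullback, since these encode how the inclusion and retraction data on the two sides of the adjunction match.
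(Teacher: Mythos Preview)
Your proof is correct and follows essentially the same approach as the paper: you reduce the adjunction bijection to the observation that morphisms on either side correspond to a single map $Z'\to W$ satisfying $g_1 i' = ja$ and $sg_1 = ar'$, and you handle pushout preservation via Lemma~\ref{lem:lim-rx} together with stability of colimits under base change, exactly as the paper does. Your account is more detailed than the paper's, but the underlying argument is the same.
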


\begin{proof} The universal property of the pushout (respectively, the pullback) enables us to define $a_{*}$ (respectively, $a^{*}$) on morphisms in a manner compatible with composition and preserving identities.

To see that $a_{*}$ and $a^{*}$ are indeed adjoint, observe that the existence of either a morphism $(Z',i',r')\to a^{*}(Z,i,r)$ in $\cat{R}_{X'}(\C)$ or a morphism $a_{*}(Z',i',r') \to (Z,i,r)$ in $\rx(\C)$ is equivalent to the existence of a morphism $g: Z' \to Z$ such that $gi'=ia$ and $rg=ar'$.

{Given that pushouts in $\rx(\C)$ are formed in the underlying category $\C$, proving that $a^{*}$ preserves pushouts  reduces to showing that 
$$(Z' \times _{X}X') \coprod_{(Z \times _{X}X')}(Z'' \times _{X}X') \cong (Z'\coprod _{Z} Z'') \times _{X}X'$$
in $\C$, whenever both sides make sense, which follows from stability under base change.}
\end{proof}

{The category of retractive spaces interests homotopy theorists primarily because of its essential role in the definition of Waldhausen $K$-theory of spaces. The $K$-theory spectrum $A(X)$ of a simplicial set $X$ was originally defined in \cite[\S 2.1]{waldhausen}  to be the Waldhausen $K$-theory, built using the $S_{\bullet}$-construction, of the Waldhausen category $(\rx)_{\mathrm{Kan}}^{\mathrm{hf}}$, where $f:(Z,i,r)\to (Z',i',r')$ is a cofibration (respectively, weak equivalence) if $f:Z\to Z'$ is a cofibration (respectively, weak homotopy equivalence) in $\usset$.    

For any generalized reduced homology theory $\op E_{*}$,  it is easy to see that there is a modified Waldhausen category structure $(\rx)_{\op E}^{\mathrm{hf}}$, with the same cofibrations as above, but where $f:(Z,i,r) \to (Z',i',r')$ is a weak equivalence if $f: Z\to Z'$ is an $\op E_{*}$-equivalence.  We denote the corresponding Waldhausen $K$-theory by $A(X;\op E_{*})$.  The identity functor 
$$(\rx)_{\mathrm{Kan}}^{\mathrm{hf}} \to (\rx)_{\op E}^{\mathrm{hf}}$$
is an exact functor of Waldhausen categories and therefore induces a morphism $j: A(X) \to A(X;\op E_{*})$. 
We see below that $j$ is a weak equivalence for well-chosen $\op E_{*}$ and $X$.}

\begin{lem}\label{lem:A-HZ}
If $X$ is simply connected, then
 $j: A(X) \xrightarrow{\simeq} A(X, {\op H\mathbb Z}_*)$ is a weak equivalence.  
 \end{lem}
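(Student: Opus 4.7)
The plan is to apply Waldhausen's fibration theorem to the Waldhausen category $\mathcal{C} := (\rx)^{\mathrm{hf}}$, endowed with its two nested classes of weak equivalences $\WE_{\mathrm{Kan}} \subseteq \WE_{\op H\mathbb{Z}}$ that share the same cofibrations. Since $\rx$ carries a cylinder functor (the simplicial mapping cylinder, suitably adapted to retractive spaces) satisfying the cylinder axiom for both classes of weak equivalences, and both classes are saturated, the fibration theorem produces a homotopy fiber sequence
$$K(\mathcal{C}^w, \WE_{\mathrm{Kan}}) \longrightarrow A(X) \xrightarrow{\;j\;} A(X; \op H\mathbb{Z}_*),$$
where $\mathcal{C}^w$ is the full Waldhausen subcategory of retractive spaces $(Z,i,r)$ for which $i: X \to Z$ is an $\op H\mathbb{Z}$-equivalence, equipped with $\WE_{\mathrm{Kan}}$. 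It therefore suffices to show $K(\mathcal{C}^w, \WE_{\mathrm{Kan}}) \simeq \ast$.

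By Lemma~\ref{lem:E-split}, the objects of $\mathcal{C}^w$ are precisely the retractive spaces whose cofiber $Y := Z/i(X)$ is a homotopically finite, integrally acyclic pointed simplicial set. The simple connectivity of $X$ should enter as follows: using $\pi_1(X) = 0$ together with Waldhausen's approximation theorem, one reduces to the full Waldhausen subcategory of $\mathcal{C}^w$ on which $Z$ itself is simply connected. On that reduced subcategory, the cofibration $X \hookrightarrow Z$ being an $\op H\mathbb{Z}$-equivalence implies, via relative Hurewicz applied to the pair $(Z,X)$, that $Y$ is simply connected and acyclic, hence contractible by Whitehead's theorem. Consequently $(Z,i,r)$ is Kan-equivalent to the zero object $(X, \id_X, \id_X)$, which in combination with the approximation reduction gives $K(\mathcal{C}^w, \WE_{\mathrm{Kan}}) \simeq \ast$ as required.

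The principal obstacle is justifying the approximation-theoretic reduction to simply connected $Z$. A priori $\mathcal{C}^w$ contains retractive spaces whose total space is far from simply connected: the classical Higman-type acyclic finite $2$-complex, viewed as a retractive space over $X = \ast$, shows that even over a point $\mathcal{C}^w$ has exotic objects with non-trivial perfect $\pi_1$. Thus the reduction is not formal, and is precisely where the simple connectivity of $X$ is genuinely used. The approach is to show that any $(Z,i,r) \in \mathcal{C}^w$ admits, inside its Kan-weak-equivalence class, a representative obtained from $X$ by attaching only simply-connected cells; the fact that $\pi_1(X) = 0$ means each attaching map $S^{n-1} \to X \hookrightarrow Z$ can be homotoped to land in a simply connected subcomplex, and this cellular modification is compatible with both cofibrations and the cylinder functor, so that it passes through Waldhausen's approximation theorem and does not disturb the $K$-theoretic computation.
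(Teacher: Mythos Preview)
Your overall strategy matches the paper's: apply Waldhausen's fibration theorem to identify the fiber of $j$ with $K(\mathcal{C}^w, \WE_{\mathrm{Kan}})$ and then argue this is contractible. The gap is in the second step. Your own Higman example defeats the proposed reduction: over $X=\ast$ (which is simply connected), the Higman acyclic $2$-complex lies in $\mathcal{C}^w$ and is \emph{not} Kan-weakly equivalent to any simply connected space, so no cellular modification within its weak equivalence class can bring it into the simply-connected subcategory. The approximation theorem would require every object of $\mathcal{C}^w$ to receive a Kan weak equivalence from a simply connected object, and this is exactly what fails. Your sketch about homotoping attaching maps into simply connected subcomplexes does not address this: the attaching maps that build the Higman complex from a point have nowhere simply connected to land. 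The same obstruction persists over any simply connected $X$; for instance take $Z = X \vee H$ with $H$ the Higman complex, $i$ the inclusion of $X$, and $r$ collapsing $H$.

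The paper circumvents the $\pi_1$ obstacle by suspending once inside $\rx$ rather than trying to control $\pi_1$ of individual objects. The relative suspension $\Sigma_X Z = CZ \cup_Z X$ is simply connected by Seifert--van Kampen (this is where $\pi_1 X = 0$ enters, since both $CZ\simeq X$ and $X$ are then simply connected), and since $\Sigma_X Z \to X$ remains an $\op H\mathbb Z_*$-equivalence, Whitehead's theorem makes it a weak homotopy equivalence; that is, $\Sigma_X Z$ is homotopically trivial in $\rx$. Now invoke the stability of Waldhausen $K$-theory \cite[1.6.2]{waldhausen}: since suspension induces an equivalence on $K(\mathcal{C}^w)$ and annihilates every object, $K(\mathcal{C}^w) \simeq \ast$. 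No approximation argument or object-by-object simplification is needed.
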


 \begin{proof}
 Waldhausen's fibration theorem~\cite[1.6.4]{waldhausen} implies that the fiber of the map $j: A(X) \to A(X, {\op H\mathbb Z}_*)$ is the $K$-theory of the subcategory $\op F$ of retractive spaces {over $X$} that are ${\op H \mathbb Z_*}$-acyclic, where the weak equivalences are the weak homotopy equivalences.  Note that after suspension all objects in $\op F$ are homotopically trivial, by Whitehead's theorem.  Since Waldhausen $K$-theory is a stable invariant by~\cite[1.6.2]{waldhausen}, it follows that the $K$-theory of $\op F$ is trivial and that $j$ is a weak equivalence.

The details of the proof require a bit of care though, since the initial/final object in $\rx$ is $X$, so that  ``acyclic," ``suspension," and 
``homotopically trivial" 
should be interpreted relative to $X$.  That is, $(Z, i, r)$ is ${\op H \mathbb Z_*}$-acyclic in $\rx$ if $r$ is an ${\op H \mathbb Z_*}$-equivalence, and a model of the suspension is {$\Sigma_X(Z) = CZ \cup_Z X$ where $CZ$ is the cone on $Z$ in $\rx$ given by the quotient of the mapping cylinder of $r$ by identifying $X \times I$ with $X \times 0$ via the inclusion $i \times I$. Note, this agrees with the cylinder and suspension functors for $\rx$ given in~\cite[1.6]{waldhausen}.} By the Seifert - van Kampen theorem, if $X$ is simply connected, then so is $\Sigma_X(Z)$. {Note that Seifert - van Kampen is applicable because $X$ is connected and therefore $Z$ is connected, since $r$ is an ${\op H \mathbb Z_*}$-equivalence.}   It follows that $\Sigma_X(Z) \to X$ is a weak homotopy equivalence for any ${\op H \mathbb Z_*}$-acyclic object $Z$; that is, $\Sigma_X(Z)$ is 
{ homotopically trivial. }
\end{proof}

{We can improve this result to include all connected spaces by considering a different class of weak equivalences. Let $q: \tX \to X$ be a universal cover of $X$. In Section~\ref{sec.twisted}, we build a model category structure on $\rx$ where $f: (Z, i, r) \to (Z', i', r')$ is a weak equivalence if and only if $q^*Z \to q^* Z'$ is an ${\op H\mathbb Z}_*$-equivalence.  Let $(\rx)_{{\op H}q^*}^{\mathrm{hf}}$ denote the associated Waldhausen category and $A(X, {\op H}q^*)$ the associated $K$-theory.  The identity functor $$(\rx)_{\mathrm{Kan}}^{\mathrm{hf}} \to (\rx)_{{\op H}q^*}^{\mathrm{hf}}$$
again induces a map  $j: A(X) \xrightarrow{} A(X, {\op H}q^*)$.  

\begin{prop}\label{prop:A-Hq}
For any connected $X$, with universal cover $q: \tX \to X$, the map
 $j: A(X) \xrightarrow{\simeq} A(X, {\op H}q^*)$ is a weak equivalence. 
\end{prop}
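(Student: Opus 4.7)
The plan is to mirror the proof of Lemma \ref{lem:A-HZ}, replacing $\HZ_{*}$-equivalences with $\Hq$-equivalences and exploiting the universal cover $q:\tX\to X$ to recover the simple connectivity used in the Whitehead step. First I would apply Waldhausen's fibration theorem \cite[1.6.4]{waldhausen} to the two Waldhausen category structures on $(\rx)^{\mathrm{hf}}$ sharing the same cofibrations but with weak equivalences $v\subset w$, where $v$ is weak homotopy equivalence and $w$ is $\Hq$-equivalence. The inclusion $v\subset w$ holds by right properness of the Kan model structure on $\usset$, since $q$ is a Kan fibration. The fibration theorem then identifies the fiber of $j$ with $K(\op F,v)$, where $\op F\subseteq (\rx)^{\mathrm{hf}}$ is the full subcategory of $w$-acyclic retractive spaces, i.e., those $(Z,i,r)$ for which $q^{*}r:q^{*}Z\to \tX$ is an $\HZ_{*}$-equivalence.

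Next I would reduce to showing that the suspension $\Sigma_{X}$ in $\rx$ sends every object of $\op F$ to a $v$-trivial object, i.e., one whose retraction is a weak homotopy equivalence; by the stability of Waldhausen $K$-theory \cite[1.6.2]{waldhausen} this suffices to conclude $K(\op F,v)\simeq *$ and hence that $j$ is a weak equivalence. The key structural input is that the right adjoint $q^{*}:\rx\to \cat{R}_{\tX}$ preserves the suspension, giving a natural isomorphism $q^{*}\Sigma_{X}Z\cong \Sigma_{\tX}q^{*}Z$. Indeed, by Lemma \ref{lem:pushforward}, $q^{*}$ preserves pushouts (since $\cat{sSet}$ is a presheaf category) and it is simplicial, so it commutes with the cone, mapping cylinder, and pushout operations that define $\Sigma_{X}$.

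Granted this, for any $(Z,i,r)\in \op F$ the pullback $q^{*}Z$ is connected (since $q^{*}r$ is an $\HZ_{*}$-equivalence and $\tX$ is connected), so the Seifert--van Kampen theorem, applied to the pushout $\Sigma_{\tX}q^{*}Z=C(q^{*}Z)\cup_{q^{*}Z}\tX$ with $\tX$ simply connected, shows $\Sigma_{\tX}q^{*}Z$ is simply connected. The suspension isomorphism on reduced homology, combined with the splitting of Lemma \ref{lem:E-split}, shows that $\Sigma_{\tX}q^{*}Z\to \tX$ is still an $\HZ_{*}$-equivalence. Whitehead's theorem then upgrades this to a weak homotopy equivalence, so $q^{*}$ applied to the retraction $\Sigma_{X}Z\to X$ is a weak homotopy equivalence.

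The remaining step, and the main obstacle I expect, is descent: deducing that $\Sigma_{X}Z\to X$ is itself a weak homotopy equivalence from the fact that its pullback along $q$ is. Conceptually this is the classical observation that pullback along a covering map reflects weak equivalences, but a careful simplicial justification requires some work. My preferred approach is to replace $\Sigma_{X}Z\to X$ by a Kan fibration; its pullback along the Kan fibration $q$ is then a Kan fibration with the same fibers, which are contractible by the previous paragraph, so the original map is a weak equivalence. Alternatively, a direct homotopy-group argument is available: the covering projection $q^{*}\Sigma_{X}Z\to \Sigma_{X}Z$ induces isomorphisms on $\pi_{\geq 2}$, the retraction structure forces $(r_{\Sigma})_{*}:\pi_{1}(\Sigma_{X}Z)\to \pi_{1}(X)$ to be surjective, and simple connectivity of $q^{*}\Sigma_{X}Z$ lets any kernel element be lifted to a null-loop upstairs and hence be null downstairs, yielding injectivity as well; $\pi_{0}$ is controlled similarly using connectedness and surjectivity of $q$.
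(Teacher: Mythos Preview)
Your approach is essentially the paper's: apply Waldhausen's fibration theorem to identify the fiber as the $K$-theory of the $\Hq$-acyclic subcategory, use $q^{*}\Sigma_{X}\cong \Sigma_{\tX}q^{*}$ together with the argument of Lemma~\ref{lem:A-HZ} to see that $q^{*}\Sigma_{X}Z\to \tX$ is a weak equivalence, and then descend along the cover. One hypothesis you omit is the Extension Axiom for $\Hq$-equivalences, which Waldhausen's fibration theorem \cite[1.6.4]{waldhausen} requires; the paper checks this explicitly, noting that $q^{*}$ preserves monomorphisms and pushouts and hence cofiber sequences, so $\Hq$-equivalences inherit the extension property from $\HZ$-equivalences.

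For the descent step the paper's argument is more direct than either of your alternatives: since $q:\tX\to X$ is already a Kan fibration with discrete fiber $\pi_{1}X$, the pullback square
\[
\xymatrix{q^{*}\Sigma_{X}Z \ar[r]\ar[d] & \tX \ar[d]^{q}\\ \Sigma_{X}Z \ar[r] & X}
\]
has both vertical maps fibrations with fiber $\pi_{1}X$, and the five lemma on the associated long exact sequences (using connectedness of $X$) gives the conclusion immediately, without any fibrant replacement or ad~hoc $\pi_{1}$ analysis.
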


\begin{proof}
Observe that $\Hq$-equivalences satisfy the Extension Axiom~\cite[1.2]{waldhausen}, since $\HZ$-equivalences satisfy the Extension Axiom, and $q^*$ preserves cofiber sequences because it preserves monomorphisms and commutes with {pushouts  (Lemma~\ref{lem:pushforward})}.  We can therefore again apply Waldhausen's fibration theorem~\cite[1.6.4]{waldhausen}. This time the fiber of the map $j: A(X) \to A(X, {\op H}q^*)$ is the $K$-theory of the subcategory $\op F$ of retractive spaces that are ${\op H}q^*$-acyclic, where the weak equivalences are the weak homotopy equivalences. We again show that after suspension all objects in $\op F$ are homotopically trivial, implying, as before, by~\cite[1.6.2]{waldhausen}, that the $K$-theory of $\op F$ is trivial and thus that $j$ is a weak equivalence.

If $(Z, i, r)$ is an ${\op H}q^*$-acyclic object $(Z, i, r)$ in $\rx$, then $(q^*Z, q^*i, q^*r)$ is ${\op H \mathbb Z_*}$-acyclic in $\cat {R}_{\tX}$, by definition. The argument in the proof of Lemma~\ref{lem:A-HZ} implies that $\Sigma_{\tX} (q^*Z) \to \tX$ is thus a weak equivalence.  Since $q^*$ commutes with colimits, $\Sigma_{\tX} (q^*Z) = q^*\Sigma_{X} Z$.  There is therefore a pullback of fibrations such that the fibers of the vertical maps are the discrete set $\pi_1{X}$.

$$\xymatrix{q^*\Sigma_{X} Z \ar [d]\ar [r]&\tX \ar [d]_q\\
\Sigma_{X} Z\ar[r]& X}$$
\noindent
Since the top map is a weak equivalence, and $X$ is connected, it follows from the five lemma that the bottom map is also a weak equivalence.  That is, $\Sigma_X Z$ is homotopically trivial in $\rx$, as required.
\end{proof}
}

\section{The adjunction theorem}\label{sec:adjunction}

The main goal  of this section is to prove the theorem below, describing the close relationship between $\comodx$ and $\rx$.

\begin{thm}\label{thm:adjunct}  There is an adjoint pair of functors
$${\xymatrix{\rx\ar @<1.25ex>[rr]^{-/X}&\perp&\comodx\ar @<1.25ex>[ll]^{-\star X}},}$$
both of which preserve $\op E_{*}$-equivalences and such that the counit map 
$$\big((Y,\rho)\star X\big)/X\to (Y,\rho)$$ 
is a natural isomorphism, while the unit map 
$$(Z,i,r)\to \big((Z,i,r)/X\big)\star X$$ 
is a natural $\op E_*$-equivalence for every generalized reduced homology theory $\op E_{*}$.  Moreover, for every simplicial map $a:X'\to X$, the  diagram 
$$\xymatrix{\cat R_{X'}\ar [d]_{a_{*}}\ar [r]^(0.4){-/X'}&\cat{Comod}_{X'_{+}}\ar [d]^{a_{*}} \\ \rx\ar [r]^(0.4){-/X}&\comodx}$$
commutes, where $a_{*}$ denotes the pushforward functor of either Remark \ref{rmk:pushforward} or Lemma \ref{lem:pushforward}.
\end{thm}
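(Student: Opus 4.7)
My plan is to proceed in four steps: construct the two functors explicitly, verify their adjointness via a hom-set bijection, check the counit and unit, and finally establish preservation of $\op E_{*}$-equivalences together with the pushforward compatibility.

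\emph{Step 1 (Functors).} I would define $-/X : \rx \to \comodx$ on objects by $(Z, i, r) \mapsto (Z/i(X), \rho_{r})$, where $Z/i(X)$ is pointed at the class $[i(X)]$ and $\rho_{r}$ is induced by $(p_{i}, r): Z \to (Z/i(X)) \times X$ followed by the canonical quotient to $(Z/i(X)) \wedge X_{+}$; this factors through $p_{i}$ because $r(i(X)) = X$ lies in the subspace that gets collapsed. Coassociativity and counitality of $\rho_{r}$ follow immediately from $ri = \id_{X}$ and the definition of $\pi$. For $-\star X: \comodx \to \rx$, I would set $(Y, \rho) \star X = P_{Y}$, where $P_{Y}$ is the pullback in $\cat{sSet}$
\[
\xymatrix{P_{Y} \ar [d] \ar [r] & Y \times X \ar [d]^{\pi_{Y}} \\ Y \ar [r]_(.4){\rho} & Y \wedge X_{+}}
\]
endowed with inclusion $i_{Y}: X \to P_{Y},\ x \mapsto (y_{0}, x)$, and retraction $r_{Y}: P_{Y} \to X$ given by projection onto the second factor. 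Using the observation preceding Remark~\ref{rmk:adjunct1} that $\rho(y)$ is a singleton $\{(y, x_{y})\}$ for each $y \neq y_{0}$, the pullback admits the explicit levelwise description
\[
(P_{Y})_{n} = (\{y_{0}\} \times X_{n}) \sqcup \{(y, x_{y}) : y \in Y_{n} \setminus \{y_{0}\}\}.
\]

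\emph{Step 2 (Adjointness, counit, unit).} A map $f: (Z, i, r) \to P_{Y}$ in $\rx$ is a pair $(f_{1}, f_{2})$ with $f_{1}: Z \to Y$, $f_{2}: Z \to X$ satisfying $\pi_{Y}(f_{1}, f_{2}) = \rho \circ f_{1}$. Compatibility with the retractive structure forces $f_{2} = r$ and $f_{1} \circ i \equiv y_{0}$, so $f_{1}$ descends to a pointed map $\tilde f_{1}: Z/i(X) \to Y$; the pullback equation then translates precisely to the coaction-compatibility of $\tilde f_{1}$, yielding the natural bijection $\rx((Z,i,r), P_{Y}) \cong \comodx((Z/i(X), \rho_{r}), (Y, \rho))$. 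For the counit $P_{Y}/i_{Y}(X) \to Y$, the levelwise formula above collapses $\{y_{0}\} \times X$ to a point and bijects the remaining simplices $(y, x_{y})$ with $Y \setminus \{y_{0}\}$, and the coaction induced by $(p_{i_{Y}}, r_{Y})$ on this quotient sends $y \mapsto y \wedge (x_{y})_{+}$, which is exactly $\rho$. For the unit $\eta: Z \to P_{Z/i(X)}$, given by $z \mapsto (p_{i}(z), r(z))$, the same levelwise description makes $\eta_{n}$ a bijection for every $n$, so $\eta$ is in fact a natural isomorphism, hence a fortiori a natural $\op E_{*}$-equivalence.

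\emph{Step 3 ($\op E_{*}$-preservation and pushforward.)} Preservation of $\op E_{*}$-equivalences by $-/X$ follows from Lemma~\ref{lem:E-split}: the splitting $\op E_{*}(Z) \cong \op E_{*}(Z/i(X)) \oplus \op E_{*}(X)$ is natural with respect to morphisms of retractive spaces, and such morphisms restrict to the identity on the $\op E_{*}(X)$ summand, so any $\op E_{*}$-equivalence in $\rx$ necessarily induces one on the quotients. For $-\star X$, the counit isomorphism identifies $P_{Y}/i_{Y}(X) \cong Y$, and the same splitting argument then reduces $\op E_{*}$-preservation for $g \star X$ to the hypothesis on $g$. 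For the pushforward square with $a: X' \to X$, the pushout defining $a_{*}Z'$ identifies $i'(X')$ with $a(X') \subseteq X$, so there is a natural iso $a_{*}Z' / (a_{*}i')(X) \cong Z'/i'(X')$ of pointed simplicial sets; a direct check on a non-basepoint class $[z']$ shows that both induced coactions send it to $[z'] \wedge a(r'(z'))_{+}$, confirming commutativity of the square. The main conceptual obstacle is identifying the correct definition of $-\star X$ as a pullback in $\cat{sSet}$ rather than some more obvious wedge-like construction, together with extracting and then verifying the explicit levelwise description of $P_{Y}$; once that description is in place, all of the remaining claims --- adjointness, the counit isomorphism, the unit being a levelwise bijection, the $\op E_{*}$-computations, and the pushforward square --- reduce to mechanical calculations on $n$-simplices.
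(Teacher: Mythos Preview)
Your approach is essentially identical to the paper's: the same pullback construction of $-\star X$ (cf.\ Remark~\ref{rmk:star} and Definition~\ref{defn:starx}), the same explicit hom-set bijection for adjointness, the counit via Lemma~\ref{lem:star-cofib}, the same appeal to Lemma~\ref{lem:E-split} for $\op E_{*}$-preservation, and a direct simplex-level check of the pushforward square.

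One point deserves comment. You claim the unit $\eta_{(Z,i,r)}: z \mapsto ([z], r(z))$ is actually a levelwise bijection, whereas the theorem asserts only that it is an $\op E_{*}$-equivalence (which the paper proves by a split cofiber-sequence argument). Your argument is in fact correct: since $i$ is a split monomorphism, $p_{i}$ restricts to a bijection $Z_{n}\setminus i(X_{n}) \xrightarrow{\cong} (Z/i(X))_{n}\setminus\{y_{0}\}$, and your levelwise description of $(P_{Y})_{n}$ then shows that $\eta_{n}$ matches $Z_{n}\setminus i(X_{n})$ bijectively with the non-basepoint part of $(P_{Z/i(X)})_{n}$ while $i(x)\mapsto (y_{0},x)$ bijects $i(X_{n})$ with $\{y_{0}\}\times X_{n}$. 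Combined with the counit isomorphism, this shows $-/X$ and $-\star X$ are inverse equivalences of categories. Be aware that this directly contradicts Remark~\ref{rmk:equivalence} (and implicitly Remark~\ref{rmk:not-comonadic}); your computation indicates those remarks are in error, so it would be worth double-checking carefully and flagging this explicitly.
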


{\begin{rmk}\label{rmk:ALT}  Since  $\rx$ is complete and therefore admits all coreflexive equalizers,  and $U$ and $\id_{\rx}$ are comonadic functors, we can apply the dual of the Adjoint Lifting Theorem \cite[\S4.5]{borceux}  to the diagram
\begin{equation*}
\xymatrix{
\rx \ar@<1ex>[rr]^-{-/X} \ar@<-1ex>[dd]_-{\id_{\rx}} && \comodx  \ar@<1ex>[dd]^-{U} \\ \\
\rx \ar@<-1ex>[rr]_{V} &&\sset  \ar@<-1ex>[ll]_-{\retx}}
\end{equation*}
and conclude that the functor $-/X$ admits a right adjoint.  Because we need to know how it behaves homotopically, we provide below an explicit formula for this right adjoint.
\end{rmk}}

\begin{rmk}\label{rmk:equivalence}
When $X=*$ is the one point space, this is an equivalence of categories.  Both $\cat{R}_*$ and $\cat{Comod}_{S^0}$ are equivalent to $\sset$, and $ - / *$ and $- \star *$ induce the identity functors.  On the other hand, the adjunction above is not an equivalence if $X\not= *$.  For example,  consider $(Z,i,r)=(X\times \Delta[1], i_{0}, pr_{1})$, where $i_{0}$ denotes the inclusion at the bottom of the cylinder, and $pr_{1}$ is the projection on the first coordinate. Definitions \ref{defn:starx} and \ref{defn:slash} below imply that the simplicial set underlying $\big((Z,i,r)/X\big) \star X$ is $(X_{+}\wedge \Delta[1])\star X$, which is easily seen not to be isomorphic to $X\times \Delta[1]$ if $X\not=\{*\}$.
\end{rmk}

We also establish below a number of useful properties of the functors $-/X$ and $-\star X$.  These properties, together with Theorem  \ref{thm:adjunct}, imply the formula below for pullbacks in $\comodx$.  

\begin{rmk}  Our original strategy for proving Theorem \ref{thm:main} required explicit computations of pullbacks in $\comodx$, which in turn led us to study the adjunction of Theorem \ref{thm:adjunct}.  Pullbacks in $\comodx$ play no role in the final version of our proof of Theorem \ref{thm:main}, but we think it is nevertheless worthwhile to record the formula below, as explicit formulas for limits in categories of coalgebras can be hard to obtain.  Limits in a category of coalgebras over a comonad are generally not created in the underlying category, so that even if one knows for abstract reasons that limits exist, it is not necessarily easy to compute them.
\end{rmk}

\begin{cor}\label{cor:pullback}  For any pair $(Y', \rho')\xrightarrow f(Y, \rho)\xleftarrow g (Y'', \rho'')$ of morphisms of right $X_{+}$-comodules with a common target, the pullback  $(Y',\rho')\times _{(Y,\rho)}(Y'', \rho'')$ in $\comodx$ exists, and there is an isomorphism of right $X_{+}$-comodules
$$(Y',\rho')\times _{(Y,\rho)}(Y'', \rho'') \cong \Big( \big((Y', \rho')\star X\big)\times _{(Y, \rho)\star X}\big((Y'', \rho'')\star X\big)\Big)/X.$$  
In particular, if there is a morphism $h:W''\to W$ of pointed simplicial sets such that $g=F_{X_{+}}(h)$, then 
$$(Y',\rho') \times_{F_{X_{+}}W}F_{X_{+}}W''\cong \Big( \big((Y', \rho')\star X\big)\times _{W}W''\Big)/X,$$
where the pullback inside the parentheses on the right is computed in the category of unpointed simplicial sets.
\end{cor}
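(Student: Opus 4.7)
The plan is to leverage the adjunction $-/X \dashv -\star X$ from Theorem \ref{thm:adjunct} together with the bicompleteness of $\comodx$ (Lemma \ref{lem:comodx-bicomplete}) to reduce pullbacks in $\comodx$ to pullbacks in $\rx$, which by Lemma \ref{lem:lim-rx} are in turn created in $\usset$.

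First, Lemma \ref{lem:comodx-bicomplete} guarantees that the pullback $(Y',\rho')\times_{(Y,\rho)}(Y'',\rho'')$ exists in $\comodx$. The key observation is that $-\star X$ is a right adjoint and hence preserves all limits, producing a natural isomorphism in $\rx$
\begin{equation*}
\big((Y',\rho')\times_{(Y,\rho)}(Y'',\rho'')\big)\star X \;\cong\; \big((Y',\rho')\star X\big)\times_{(Y,\rho)\star X}\big((Y'',\rho'')\star X\big).
\end{equation*}
Applying the functor $-/X$ to both sides, and using the counit isomorphism of Theorem \ref{thm:adjunct} to rewrite the left-hand side as $(Y',\rho')\times_{(Y,\rho)}(Y'',\rho'')$, yields the first displayed formula of the corollary. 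The pullback inside the parentheses on the right is a priori formed in $\rx$ but is created in $\usset$ by Lemma \ref{lem:lim-rx}.

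For the second statement, suppose $g=F_{X_+}(h)$ for some $h:W''\to W$ in $\sset$. Since $V=U\circ(-/X)$ by Remark \ref{rmk:ALT}, uniqueness of right adjoints gives $\retx\cong(-\star X)\circ F_{X_+}$, and hence by Remark \ref{rmk:adjunct2} the retractive space $F_{X_+}(W)\star X$ is isomorphic to $(W\times X,i_{w_0},\operatorname{proj}_2)$, and similarly for $W''$. Substituting these identifications into the first formula reduces the problem to showing that
\begin{equation*}
\big((Y',\rho')\star X\big)\times_{W\times X}(W''\times X)\;\cong\;\big((Y',\rho')\star X\big)\times_W W''
\end{equation*}
in $\usset$. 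This is a short diagram chase: because every map in the pullback diagram over $W\times X$ commutes with the retractions to $X$, the second coordinate of a point in the left-hand pullback is determined by the element of $(Y',\rho')\star X$, leaving only the condition over $W$ to be imposed.

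The main technical obstacle I anticipate is keeping careful track of the adjunction data: verifying that $F_{X_+}(W)\star X\cong\retx(W)$ as a retractive space over $X$ (not merely as a simplicial set), and confirming that the composite $(Y',\rho')\star X\to F_{X_+}(W)\star X\cong W\times X\to W$ appearing in the reduced pullback is the map corresponding to $f$ under the $V\dashv\retx$ adjunction. Both points should follow formally from naturality of the counits and uniqueness of adjoints.
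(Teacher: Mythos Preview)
Your proposal is correct and follows essentially the same argument as the paper: apply the right adjoint $-\star X$ to the pullback, invoke the counit isomorphism after applying $-/X$, and for the cofree case identify $F_{X_+}(W)\star X$ with $\retx(W)$. The only minor difference is that you obtain this last identification abstractly via uniqueness of right adjoints (from $V=U\circ(-/X)$), whereas the paper cites the explicit computation in Example~\ref{lem:composite}; both routes are valid, and your formal argument is arguably cleaner.
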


{Another useful consequence of Theorem \ref{thm:adjunct} is the following property of the pullback functor $a^{*}: \comodx \to \cat {Comod}_{X'_{+}}$.

\begin{cor}\label{cor:astar-pushout}  For any simplicial map $a: X'\to X$, the pullback functor 
$$a^{*}: \comodx \to \cat {Comod}_{X'_{+}}$$ 
commutes with pushouts.
\end{cor}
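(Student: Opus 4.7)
The plan is to transport the pushout-preservation property from retractive spaces (where it is supplied by Lemma~\ref{lem:pushforward}) to comodules, using the reflective embedding provided by Theorem~\ref{thm:adjunct}. Since the counit $((Y,\rho)\star X)/X\to(Y,\rho)$ is a natural isomorphism, $-\star X$ is fully faithful and exhibits $\comodx$ as a reflective subcategory of $\rx$ with reflection $-/X$; likewise for the primed versions. Colimits in $\comodx$ can therefore be computed as $\colim_{\comodx}D\cong\bigl(\colim_{\rx}\,(-\star X)\circ D\bigr)/X$ for any small diagram $D$.

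The key technical step will be to establish, natural in $W\in\rx$, an isomorphism $a^{*}(W/X)\cong (a^{*}W)/X'$, where the left-hand $a^{*}$ is the pullback of comodules from Lemma~\ref{lem.push.pull.comod} and the right-hand $a^{*}$ is the pullback of retractive spaces from Lemma~\ref{lem:pushforward}. Using the chains of adjunctions $a_{*}\dashv a^{*}$ and $-/X\dashv -\star X$ (and their primed analogues), both sides of the desired isomorphism are seen to represent the same functor in $(V,\tau)\in\cat{Comod}_{X'_{+}}$ once we know the Beck-Chevalley isomorphism $a_{*}(V,\tau)\star X\cong a_{*}\bigl((V,\tau)\star X'\bigr)$. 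The main obstacle is verifying this Beck-Chevalley condition, since such identifications between mixed compositions of left and right adjoints are not formally automatic. In our setting, however, it can be checked directly from the explicit descriptions: both retractive spaces are obtained by ``expanding the basepoint of $V$ into $X$'' using the composite grading $V\setminus\{*\}\to X'\xrightarrow{\,a\,} X$ coming from $\tau$ post-composed with $a$.

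With these ingredients in hand, the rest is assembly. Given a pushout diagram $D$ in $\comodx$, let $Q$ denote the pushout in $\rx$ of $(-\star X)\circ D$, so that the pushout in $\comodx$ is $Q/X$. Applying $a^{*}$ and the compatibility isomorphism of the previous paragraph yields $a^{*}(Q/X)\cong(a^{*}Q)/X'$. Now $a^{*}\colon\rx\to\cat R_{X'}$ preserves pushouts by Lemma~\ref{lem:pushforward} (using that $\sset$ is a topos, hence colimits are universal), so $a^{*}Q$ is the pushout in $\cat R_{X'}$ of $a^{*}\circ(-\star X)\circ D\cong (-\star X')\circ a^{*}\circ D$, the latter isomorphism being the commutative square of right adjoints obtained by dualizing the square of pushforwards from Theorem~\ref{thm:adjunct}. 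Finally, applying the reflection $-/X'$ (which preserves colimits as a left adjoint) and invoking the counit isomorphism $\bigl((-\star X')V\bigr)/X'\cong V$ identifies $a^{*}(Q/X)$ with the pushout of $a^{*}\circ D$ in $\cat{Comod}_{X'_{+}}$, as required.
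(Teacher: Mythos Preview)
Your strategy is sound and, once streamlined, coincides with the paper's proof. The paper simply observes that the commuting square of left adjoints from Theorem~\ref{thm:adjunct} yields a commuting square of right adjoints, $(-\star X')\circ a^{*}\cong a^{*}\circ(-\star X)$, and then applies $-/X'$ on the left together with the counit isomorphism $(-/X')\circ(-\star X')\cong\id$ to obtain the factorisation
\[
a^{*}\;=\;(-/X')\circ a^{*}\circ(-\star X)\colon \comodx\longrightarrow\cat{Comod}_{X'_{+}}.
\]
Since each of $-/X'$ (a left adjoint), $a^{*}\colon\rx\to\cat R_{X'}$ (Lemma~\ref{lem:pushforward}), and $-\star X$ (Lemma~\ref{lem:star-pushout}) preserves pushouts, so does their composite. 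Your argument unwinds to exactly this once one notices that the pushout $Q$ you form in $\rx$ already lies in the essential image of $-\star X$, by Lemma~\ref{lem:star-pushout}.

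There is, however, a genuine gap in your justification of the intermediate isomorphism $a^{*}(W/X)\cong(a^{*}W)/X'$ for \emph{arbitrary} $W\in\rx$. You assert that it follows ``by chains of adjunctions'' from the Beck--Chevalley isomorphism $a_{*}\bigl((V,\tau)\star X'\bigr)\cong (a_{*}(V,\tau))\star X$, but this does not go through: the two squares in question are related by taking mates with respect to both the horizontal adjunctions $(-/)\dashv(-\star)$ and the vertical adjunctions $a_{*}\dashv a^{*}$, and mate correspondences do not in general carry isomorphisms to isomorphisms. If you try to run a Yoneda argument, you will find that after using full faithfulness of $-\star X'$ and your Beck--Chevalley condition you are reduced to showing $a^{*}\bigl((W/X)\star X\bigr)\cong\bigl((a^{*}W)/X'\bigr)\star X'$, which is equivalent to what you started with. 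Fortunately you do not need the general statement: for $W$ in the image of $-\star X$ (which is the only case you use, since $Q\cong(\text{pushout in }\comodx)\star X$ by Lemma~\ref{lem:star-pushout}), the isomorphism is immediate from the right-adjoint square and the counit identity---and that is precisely the paper's argument.
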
}

{We begin below by defining and studying first the functor $-\star X$, then the functor $-/X$.  Having established the necessary properties of these two functors, we then prove Theorem \ref{thm:adjunct}, Corollary \ref{cor:pullback} 
{and Corollary \ref{cor:astar-pushout}}.}

\subsection{From comodules to retractive spaces}

The definition of the functor from right $X_{+}$-comodules to retractive spaces over $X$ 
begins with a construction in $\cat{sSet}$. 

\begin{defn} For any right $X_{+}$-comodule $(Y,\rho)$, let $Y\star X$ denote the pullback of 
$$Y \xrightarrow \rho Y\wedge X_{+}\xleftarrow {\pi_{Y}} Y\times X$$ 
in the category of unpointed simplicial sets.
\end{defn}

\begin{rmk}\label{rmk:star}  For any right $X_{+}$-comodule $(Y, \rho)$, the simplicial set $Y\star X$ looks something like $Y\vee X$, though this does not actually make sense, as $X$ has no basepoint.  In fact, easy calculation shows that, up to isomorphism, the set of $n$-simplices of $Y\star X$ is
{\small $$(Y\star X)_{n}=\{ (y,x)\in Y_{n}\times X_{n} \mid \text{ either } y\not= y_{0} \text{ and } \rho(y)=(y,x) \text{ or } y=y_{0}\text{ and } x\in X_{n}\},$$}
where $y_{0}$ denotes (an iterated degeneracy of) the basepoint of $Y$.
\end{rmk}

\begin{defn}\label{defn:starx} Let $-\star X: \comodx \to \rx$ denote the functor defined on objects by
$(Y,\rho)\star X=(Y\star X, i_{\rho}, r_{\rho})$, where 
$$i_{\rho}:X\to Y\star X: x\mapsto (y_{0},x) \quad\text{ and }\quad r_{\rho}:Y\star X\to X:(y,x) \mapsto x.$$
\end{defn}

\begin{notn}\label{notn:map-name} For any right $X_{+}$-comodule $(Y, \rho)$, it is useful to give names to all the maps in the pullback defining $Y\star X$, which we do as follows.
$$\xymatrix{Y \star X \ar [d]_{\pi_{\rho}}\ar [r]^(0.4){\bar\rho}&Y\times X\ar [d]^{\pi_{Y}}\\
Y\ar[r]^(0.4){\rho}&Y\wedge X_{+}}$$
Note that $\pi_{\rho}(y,x)=y$ for all $(y,x)\in Y\star X$, while $\bar \rho$ is simply an inclusion.
\end{notn}

\begin{ex}\label{lem:composite}  The calculation of $(Y,\rho)\star X$ is particularly simple when $(Y,\rho)$ is a cofree comodule. For all pointed simplicial sets $Y$, there is a natural isomorphism of retractive spaces
$$F_{X_{+}}(Y)\star X\cong \retx(Y).$$
We establish this isomorphism as follows. Let $\mathbf y_{0}$ denote the basepoint of $Y\wedge X_{+}$. By the computation in Remark \ref{rmk:star}, the set of $n$-simplices of the simplicial set underlying $F_{X_{+}}(Y)\star X$ is
$$\big((Y\wedge X_{+})\star X\big)_{n}=\big((Y_{n}\smallsetminus \{y_{0}\})\times \Delta_{X}(X_{n})\big) \cup\big( \{\mathbf y_{0}\} \times X_{n}\big),$$
which is in bijection with
$$\big((Y_{n}\smallsetminus \{y_{0}\})\times X_{n}\big) \cup\big( \{ y_{0}\} \times X_{n}\big)=Y_{n}\times X_{n}.$$
One checks easily that this bijection respects faces and degeneracies, and that under this identification of $\big((Y\wedge X_{+})\star X\big)_{n}$ with $Y_{n}\times X_{n}$, the injections $i_{\rho}$ and $r_{\rho}$ correspond to $i_{y_{0}}$ and $\operatorname{proj}_{2}$, completing the proof.
\end{ex}

The lemma below follows easily from the formulas in the Definition \ref{defn:starx}. 

\begin{lem}\label{lem:star-cofib} For any right $X_{+}$-comodule $(Y, \rho)$, the natural map of pointed simplicial sets
$$\widehat\pi_{\rho}:(Y\star X)/i_{\rho}(X)\to Y: \big[(y,x)\big]\mapsto y$$
is an isomorphism.
\end{lem}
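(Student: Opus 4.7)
The plan is to show that $\widehat\pi_\rho$ is a levelwise bijection of simplicial sets and that it is automatically a pointed simplicial map, hence an isomorphism.

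First I would use the explicit description of $(Y\star X)_n$ from Remark \ref{rmk:star}: an $n$-simplex is a pair $(y,x)\in Y_n\times X_n$ with either $y=y_0$ (an iterated degeneracy of the basepoint), in which case $x$ is arbitrary, or $y\neq y_0$, in which case $\rho(y)=[(y,x)]$. The observation made in Section \ref{sec:comodx} right after Notation \ref{notn:comodx} shows that the counit axiom $(Y\wedge\varepsilon)\rho=Y$ forces $\rho(y)$ to be the singleton $\{(y,x_y)\}$ for a uniquely determined $x_y\in X_n$ whenever $y\neq y_0$. Consequently, the fiber of the projection $\pi_\rho:Y\star X\to Y$, $(y,x)\mapsto y$, over $y\neq y_0$ is the single element $(y,x_y)$, while the fiber over $y_0$ is exactly $\{y_0\}\times X_n=i_\rho(X)_n$.

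Next I would deduce from this that after collapsing $i_\rho(X)$ to the basepoint in pointed simplicial sets, the induced map $\widehat\pi_\rho:(Y\star X)/i_\rho(X)\to Y$ is bijective on $n$-simplices for every $n$: the basepoint of the quotient maps to $y_0$, and each class $[(y,x_y)]$ with $y\neq y_0$ maps to the non-basepoint simplex $y$. Surjectivity follows because every $y\in Y_n$ is hit (the basepoint by the basepoint, and $y\neq y_0$ by $[(y,x_y)]$), and injectivity follows because $x_y$ is uniquely determined by $y$.

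Finally, since $\pi_\rho$ is a morphism in $\cat{sSet}$ by construction of the pullback $Y\star X$, and the quotient $(Y\star X)/i_\rho(X)$ is formed in $\sset$, the induced map $\widehat\pi_\rho$ is automatically a morphism of pointed simplicial sets. A pointed simplicial map that is bijective in every degree is an isomorphism, so $\widehat\pi_\rho$ is an isomorphism, and naturality in $(Y,\rho)$ is immediate from the universal properties used to define the pullback and quotient. There is no serious obstacle here; the only subtlety is extracting the uniqueness of $x_y$ from the counit axiom, which is precisely the content of the remark following Notation \ref{notn:comodx}.
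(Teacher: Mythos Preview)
Your proposal is correct and follows essentially the same approach as the paper, which simply states that the lemma ``follows easily from the formulas in Definition \ref{defn:starx}'' without writing out details. Your argument spells out exactly what that verification amounts to: using the explicit description of $(Y\star X)_n$ from Remark \ref{rmk:star} together with the uniqueness of $x_y$ coming from the counit axiom to check that $\widehat\pi_\rho$ is a levelwise bijection.
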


Preservation and reflection of homology equivalences by the functor $-\star X$ is an immediate consequence Lemmas \ref{lem:E-split} and \ref{lem:star-cofib}.

\begin{lem}\label{lem:star-pres-equiv} For any generalized reduced homology theory $\op E_{*}$, the functor $$-\star X: \comodx\to \rx$$ preserves and reflects $\op E_*$-equivalences.
\end{lem}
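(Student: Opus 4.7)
The plan is to reduce both preservation and reflection of $\op E_*$-equivalences to the splitting of $\op E_*$-homology provided by Lemma~\ref{lem:E-split}, using Lemma~\ref{lem:star-cofib} to identify the quotient by the image of the retraction inclusion with the original comodule.

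Given a morphism $f\colon (Y,\rho)\to (Y',\rho')$ in $\comodx$, applying the functor $-\star X$ yields a morphism of retractive spaces
$$f\star X\colon (Y\star X, i_\rho, r_\rho) \to (Y'\star X, i_{\rho'}, r_{\rho'}).$$
I would first observe, using the explicit description in Remark~\ref{rmk:star} together with Lemma~\ref{lem:star-cofib}, that the induced map $\overline{f\star X}$ on quotients fits into a commutative square
$$\xymatrix{(Y\star X)/i_\rho(X) \ar[r]^(0.55){\widehat\pi_\rho}_(0.55){\cong} \ar[d]_{\overline{f\star X}} & Y \ar[d]^f \\ (Y'\star X)/i_{\rho'}(X) \ar[r]^(0.55){\widehat\pi_{\rho'}}_(0.55){\cong} & Y',}$$
so that, on passage to $\op E_*$-homology, $\overline{f\star X}_*$ is an isomorphism if and only if $f_*$ is.

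Next, fix any basepoint $x_0\in X$. Lemma~\ref{lem:E-split} supplies natural isomorphisms
$$\op E_*(Y\star X)\cong \op E_*\bigl((Y\star X)/i_\rho(X)\bigr)\oplus \op E_*(X),\qquad \op E_*(Y'\star X)\cong \op E_*\bigl((Y'\star X)/i_{\rho'}(X)\bigr)\oplus \op E_*(X),$$
and with respect to these splittings $(f\star X)_*$ acts as $\overline{f\star X}_*\oplus \id_{\op E_*(X)}$, since $f\star X$ is a morphism of retractive spaces and hence commutes with the inclusions and retractions to $X$. Combining the two previous steps, $(f\star X)_*$ is an isomorphism if and only if $f_*$ is, which proves both preservation and reflection of $\op E_*$-equivalences.

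The only subtlety, which should be a routine check rather than a genuine obstacle, is the compatibility of basepoints: the basepoint of $Y$ inherited from the quotient isomorphism $\widehat\pi_\rho$ coincides with the original basepoint $y_0$ of $Y$, because $i_\rho(X)$ is mapped to $y_0$ by $\widehat\pi_\rho$ by its definition in Lemma~\ref{lem:star-cofib}. Once this is noted, the argument above is immediate.
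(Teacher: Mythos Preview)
Your proof is correct and follows exactly the approach the paper indicates: the paper simply states that the result is an immediate consequence of Lemmas~\ref{lem:E-split} and~\ref{lem:star-cofib}, and you have spelled out precisely how those two lemmas combine to give both preservation and reflection.
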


{As we show later in this section, $-\star X$ is a right adjoint and so commutes with all limits. It also commute with pushouts.

\begin{lem}\label{lem:star-pushout} For any pair of morphisms in $\comodx$ with common domain, $(Y, \rho)\to (Y', \rho')$ and $(Y, \rho)\to (Y'', \rho'')$,
$$\big((Y', \rho')\coprod_{(Y, \rho)}(Y'', \rho'')\big) \star X \cong  \big((Y', \rho')\star X\big)\coprod_{(Y, \rho)\star X}\big((Y'', \rho'')\star X\big)$$
{in $\rx$.}
\end{lem}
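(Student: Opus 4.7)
My plan is to show that the canonical comparison morphism
$$\phi\colon \big((Y', \rho') \star X\big) \coprod_{(Y, \rho) \star X} \big((Y'', \rho'') \star X\big) \longrightarrow \big((Y', \rho') \coprod_{(Y, \rho)} (Y'', \rho'')\big) \star X$$
in $\rx$, induced by functoriality of $-\star X$ together with the universal property of the pushout, is an isomorphism, by verifying that it is bijective at every simplicial level.

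First, I reduce both sides to underlying (unpointed) simplicial sets. Since $U\colon\comodx\to\sset$ is a left adjoint (Remark \ref{rmk:adjunct1}), the pushout in $\comodx$ is created in $\sset$: the underlying pointed simplicial set is $W := Y'\coprod_Y Y''$, equipped with the coaction $\rho_W$ uniquely induced by $\rho'$ and $\rho''$. The underlying unpointed simplicial set is the same, since in a pointed pushout the basepoints of $Y'$ and $Y''$ are identified via the basepoint of $Y$. By Lemma \ref{lem:lim-rx}, the pushout on the left of $\phi$ is likewise computed in $\usset$.

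Next, I exploit the explicit description in Remark \ref{rmk:star}: an $n$-simplex of $Z \star X$, for a comodule $(Z,\sigma)$, is a pair $(z,x)\in Z_n\times X_n$ such that either $z$ is the degenerate basepoint (in which case $x$ is arbitrary) or $\sigma(z)=[(z,x)]$ (in which case $x$ is uniquely determined by $\sigma$). Under $\phi$, an element $(y',x)\in(Y'\star X)_n$ maps to $(\iota_{Y'}(y'),x)\in (W\star X)_n$, where $\iota_{Y'}\colon Y'\to W$ is the canonical map, and analogously for $Y''$. Surjectivity is immediate: every $n$-simplex of $W$ lifts to a simplex of $Y'$ or $Y''$, whose coaction matches $\rho_W$ restricted to that piece, forcing the $x$-coordinate into $Y'\star X$ or $Y''\star X$ respectively. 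For injectivity, two simplices of $Y'_n\sqcup Y''_n$ identified in $W_n$ are connected by a zigzag of relations $f(y)\sim g(y)$ with $y\in Y_n$; naturality of the coactions places each intermediate $(y,x)$ into $(Y\star X)_n$, thereby producing precisely the identifications required in the pushout on the left of $\phi$.

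The main bookkeeping obstacle occurs in the ``collapse'' case, in which a non-basepoint simplex $y'\in Y'_n$ satisfies $\iota_{Y'}(y')=w_0$. Such a collapse must be witnessed by some $y\in Y_n$ with $f(y)=y'$ and $g(y)=y''_0$, and then $(y,\rho(y)_2)\in (Y\star X)_n$ provides the identification chain $(y',\rho'(y')_2)\sim (y''_0,\rho(y)_2)\sim (y'_0,\rho(y)_2)$ in the pushout (the last step via the basepoint $y_0$), matching the single element $(w_0,\rho(y)_2)\in (W\star X)_n$ on the target side. Once bijectivity at each simplicial level is established, compatibility of $\phi$ with the inclusion $i_\rho$ and retraction $r_\rho$ of Definition \ref{defn:starx} is immediate from the formulas there, yielding the desired isomorphism in $\rx$.
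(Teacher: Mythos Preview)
Your element-by-element approach is essentially correct and will work, but it differs from the paper's argument, and your collapse case is stated a bit too narrowly.

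On the latter point: you write that the collapse ``must be witnessed by some $y\in Y_n$ with $f(y)=y'$ and $g(y)=y''_0$.'' That is not true in general.  A non-basepoint $y'\in Y'_n$ with $\iota_{Y'}(y')=w_0$ may reach the basepoint only through a longer zigzag, e.g.\ $y'=f(y_1)$, $g(y_1)=g(y_2)$ with $g(y_1)$ a non-basepoint of $Y''$, and then $f(y_2)=y'_0$.  The fix is easy: run your general zigzag argument until you first hit a basepoint; all earlier steps involve only non-basepoints and hence carry a common $x$-coordinate, and at the first basepoint step your displayed identification $(z_{j-1},x)\sim(y''_0,x)\sim(y'_0,x)$ applies.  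So the argument closes, but the sentence as written is inaccurate.

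As for the comparison: the paper does not argue element by element.  Instead it isolates a general principle in $\usset$: given a $3\times 3$ commuting diagram with rows $A'\leftarrow A\to A''$, $B'\leftarrow B\to B''$, $C'\leftarrow C\to C''$, where the vertical maps $i,i',i''\colon A^{(\bullet)}\to B^{(\bullet)}$ are split monomorphisms (with compatible retractions), one has
\[
(A'\coprod_A A'')\times_{B'\coprod_B B''}(C'\coprod_C C'')\;\cong\;(A'\times_{B'}C')\coprod_{A\times_B C}(A''\times_{B''}C'').
\]
This is then applied with $A=Y$, $B=Y\wedge X_+$, $C=Y\times X$, the split mono being $\rho$ retracted by the counit $Y\wedge X_+\to Y$.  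The paper's route is cleaner conceptually and makes transparent \emph{why} the lemma holds (it is the retraction of $\rho$ that forces the pullback to commute with the pushout); your route is more concrete and makes the simplicial bookkeeping explicit.  Either is fine, but the paper's formulation would spare you the case analysis around basepoints.
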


\begin{proof}  Recall that for any $X_{+}$-comdule $(Y, \rho)$, the counit $Y\sm X_{+}\to Y$ is a retraction for the coaction map $\rho: Y \to Y\sm X_{+}$ {and that pushouts in $\rx$ are created in $\usset$ (Lemma \ref{lem:lim-rx}).} A straightforward computation shows that if 
$$\xymatrix{C'\ar [d]_{p'}&C\ar [d]_{p}\ar [l]\ar [r]&C''\ar [d]_{p''}\\
B'&B\ar [l]\ar [r]&B''\\
A'\ar [u]^{i'}&A\ar[l]\ar[r]\ar[u]^{i}&A''\ar [u]^{i''}
}$$
is a commutative diagram in $\usset$, where $i$, $i'$ and $i''$ all admit retractions that make the diagram commute, then 
$$(A'\coprod_{A}A'')\underset {B'\coprod_{B}B''}{\times} (C'\coprod_{C}C'') \cong (A'\times_{B'}C')\coprod_{A\times _{B}C}(A''\times _{B''}C'').$$
When making this computation, it is helpful to observe that the pullback of a map $p: C\to B$ along an inclusion $A\hookrightarrow B$ is simply $p^{-1}(A)$. 
\end{proof}}

\subsection{From retractive spaces to comodules}

The functor assigning a right $X_{+}$-comodule to any retractive space over $X$ is constructed as follows.

\begin{defn}\label{defn:slash} Let $-/X: \rx\to \comodx$ denote the functor specified on objects by
$$(Z,i,r)/X=\big(Z/i(X), \rho_{(i,r)}\big),$$
where $\rho_{(i,r)}: Z/i(X) \to \big(Z/i(X)\big) \wedge X_{+}$ is the unique pointed simplicial map such that 
$$\xymatrix{Z \ar[rr]^(0.4){(p_{i}\times r)\Delta_{Z}}\ar [drr]_{p_{i}}&&\big(Z/i(X)\big) \times X \ar[rr]^{\pi_{Z/i(X)}} &&\big(Z/i(X)\big) \wedge X_{+}\\
&&Z/i(X)\ar [urr]_{\rho_{(i,r)}}}$$
commutes. In other words, if $z \in Z_{n}\smallsetminus i(X_{n})$ for some $n$, then $[z]=\{z\}\in Z_{n}/i(X_{n})$, and 
$$\rho_{(i,r)}\big([z]\big)=\big[\big([z], r(z)\big)\big]= \big\{ \big([z], r(z)\big)\big\}.$$
\end{defn}

The lemma below is an immediate consequence of Lemma \ref{lem:E-split}.  

\begin{lem}\label{lem:slash-pres-equiv} For any generalized reduced homology theory $\op E_{*}$, the functor 
$$-/X: \rx\to \comodx$$ 
preserves and reflects $\op E_*$-equivalences.
\end{lem}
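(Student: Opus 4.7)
The plan is to deduce this directly from the natural splitting provided by Lemma \ref{lem:E-split}. First, I would observe that for any morphism $f:(Z,i,r)\to (Z',i',r')$ of retractive spaces, the conditions $f i = i'$ and $r' f = r$ imply that the split cofiber sequence
$$\xymatrix{ X \ar @<0.8ex>[r]^{i}& Z  \ar @<0.8ex>[l]^{r}\ar [r]^{p_{i}} &Z/i(X)}$$
is natural in $(Z,i,r)$, so that applying $\op E_{*}$ yields a commuting diagram in which the top and bottom rows are the natural direct sum decompositions
$$\op E_{*}(Z)\cong \op E_{*}\big(Z/i(X)\big)\oplus \op E_{*}(X)\quad\text{and}\quad \op E_{*}(Z')\cong \op E_{*}\big(Z'/i'(X)\big)\oplus \op E_{*}(X)$$
from Lemma \ref{lem:E-split}, and under which $\op E_{*}(f)$ corresponds to $\op E_{*}(f/X)\oplus \id_{\op E_{*}(X)}$.

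From this decomposition, the conclusion is immediate: $\op E_{*}(f)$ is an isomorphism if and only if $\op E_{*}(f/X)$ is an isomorphism, since the second summand is always the identity. Hence $f$ is an $\op E_{*}$-equivalence in $\rx$ precisely when $f/X$ is an $\op E_{*}$-equivalence in $\comodx$, giving both preservation and reflection. Since $\op E_{*}$ is reduced, the resulting characterization is independent of the choice of basepoint in $X$ used to define $\op E_{*}$-equivalences in $\rx$, so no additional care is needed there.

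There is no real obstacle here; the only point requiring minor attention is verifying naturality of the splitting, which follows directly from the defining relations of a morphism of retractive spaces. Everything else reduces to the observation that a direct sum of maps is an isomorphism if and only if each summand is.
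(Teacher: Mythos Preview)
Your proposal is correct and follows exactly the approach the paper intends: the paper simply states that the lemma is an immediate consequence of Lemma~\ref{lem:E-split}, and you have spelled out precisely that argument by using the naturality of the splitting to identify $\op E_*(f)$ with $\op E_*(f/X)\oplus \id_{\op E_*(X)}$.
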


In Section \ref{sec:model-cat} we need to know that the functors $-/X$ and $-\star X$ preserve and reflect underlying monomorphisms.

\begin{lem}\label{lem:slash-mono}  The simplicial map underlying $f: (Z,i,r)\to (Z',i',r')$ is a monomorphism if and only if the simplicial map underlying $$f/X: \big(Z/i(X), \rho_{i,r}\big)\to \big(Z'/i'(X), \rho_{i',r'})$$ is a monomorphism.
\end{lem}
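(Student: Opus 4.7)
The plan is to prove both directions by unpacking the definition of the quotient $Z/i(X)$ levelwise, observing that a simplicial map is a monomorphism iff it is injective on $n$-simplices for every $n$, and then using the retraction equation $r'f = r$ together with $fi = i'$ to shuttle back and forth between $Z$ and $Z/i(X)$.

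For the forward direction, assume $f$ is a monomorphism and suppose $[f(z_1)] = [f(z_2)]$ in $Z'/i'(X)$ for some $z_1, z_2 \in Z_n$. There are two cases: either $f(z_1) = f(z_2)$, in which case injectivity of $f$ immediately gives $z_1 = z_2$ hence $[z_1] = [z_2]$; or $f(z_1), f(z_2) \in i'(X_n)$. In this second case, writing $f(z_1) = i'(x_1)$, the relation $r'f = r$ yields $r(z_1) = r'i'(x_1) = x_1$, and then $f(i(r(z_1))) = i'(r(z_1)) = i'(x_1) = f(z_1)$, so injectivity of $f$ forces $z_1 = i(r(z_1)) \in i(X_n)$, and similarly $z_2 \in i(X_n)$, giving $[z_1] = [z_2]$.

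For the reverse direction, assume $f/X$ is a monomorphism and suppose $f(z_1) = f(z_2)$ for some $z_1, z_2 \in Z_n$. Then $[f(z_1)] = [f(z_2)]$, so injectivity of $f/X$ gives $[z_1] = [z_2]$. Either $z_1 = z_2$ and we are done, or $z_1, z_2 \in i(X_n)$, in which case writing $z_j = i(x_j)$ and applying $r$ to the equation $f(z_1) = f(z_2)$ gives $x_1 = r i(x_1) = r(z_1) = r'f(z_1) = r'f(z_2) = r(z_2) = x_2$, so $z_1 = z_2$.

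Neither direction presents any real obstacle; the main point is simply to keep track of when an element of $Z$ (or $Z'$) lies in the image of $i$ (or $i'$), for which the retraction identity $r'f = r$ is exactly the tool needed. The argument is entirely formal and requires no homotopical input.
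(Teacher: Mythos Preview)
Your proof is correct and follows essentially the same elementary levelwise argument as the paper's own proof. The paper's version is terser—it dismisses the forward direction as ``obvious'' and, for the reverse direction, decomposes $Z$ as $i(X)\cup\big(Z\smallsetminus i(X)\big)$ and checks injectivity on each piece—whereas you spell out the case analysis in full and make systematic use of the retraction identity $r'f=r$ where the paper instead appeals to $fi=i'$ and injectivity of $i'$; these are equivalent moves.
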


\begin{proof}  It is obvious that if $f:Z\to Z'$ is a monomorphism of simplicial sets such that $fi=i'$, then the induced morphism of pointed simplicial sets $\hat f: Z/i(X) \to Z'/i'(X)$ is also a monomorphism.

Suppose now that $f:Z\to Z'$ satisfies $fi=i'$ and that the induced map $\hat f: Z/i(X) \to Z'/i'(X)$ is a monomorphism. Since $i'$ is a monomorphism, the restriction of $f$ to $i(X)$ must be a monomorphism.  On the other hand, the (purely set-theoretic) restriction of $f$ to $Z\smallsetminus i(X)$ must also be a monomorphism, since $\hat f$ is a monomorphism.  As $Z= i(X) \cup \big(Z\smallsetminus i(X)\big)$, we can conclude that $f$ itself is a monomorphism.
\end{proof}

By Lemma \ref{lem:star-cofib} it follows that $-\star X$ also preserves and reflects underlying monomorphisms.

\begin{cor}\label{cor:star-mono}
The map $g: (Y, \rho) \to (Y', \rho')$ of comodules is a monomorphism if and only if the simplicial map  $g \star X: Y \star X \to Y' \star X$ is a monomorphism.
\end{cor}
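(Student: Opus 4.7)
The plan is to deduce this corollary directly by chaining Lemma~\ref{lem:slash-mono} with Lemma~\ref{lem:star-cofib}, treating $-\star X$ followed by $-/X$ as the identity on comodules (up to natural isomorphism).

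First, I would verify that the isomorphism of Lemma~\ref{lem:star-cofib}, $\widehat\pi_\rho : (Y\star X)/i_\rho(X) \xrightarrow{\cong} Y$, is natural in $(Y,\rho)$. Given a morphism $g : (Y,\rho) \to (Y',\rho')$ of comodules, the morphism of retractive spaces $g \star X$ sends $(y,x)$ to $(g(y),x)$, so it passes to the quotient as a map $(g\star X)/X$, and the explicit formula $\widehat\pi_\rho([(y,x)]) = y$ makes the square
\[
\xymatrix{
(Y\star X)/i_\rho(X) \ar[r]^{(g\star X)/X} \ar[d]_{\widehat\pi_\rho}^{\cong} & (Y'\star X)/i_{\rho'}(X) \ar[d]^{\widehat\pi_{\rho'}}_{\cong} \\
Y \ar[r]_{g} & Y'
}
\]
commute on the nose. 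In particular, the underlying simplicial map of $(g\star X)/X$ is a monomorphism if and only if that of $g$ is.

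Next, I would apply Lemma~\ref{lem:slash-mono} to the morphism of retractive spaces $g \star X : (Y\star X, i_\rho, r_\rho) \to (Y'\star X, i_{\rho'}, r_{\rho'})$: its underlying simplicial map is a monomorphism if and only if the underlying simplicial map of $(g\star X)/X$ is a monomorphism. Combined with the previous step, this yields the equivalence: the underlying simplicial map of $g \star X$ is a monomorphism iff $g : Y \to Y'$ is a monomorphism, which is exactly the statement of the corollary.

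There is essentially no obstacle here, since the corollary reduces to a routine concatenation of two previously established results together with the (evident) naturality of the formula in Lemma~\ref{lem:star-cofib}; the only thing to be careful about is making explicit that Lemma~\ref{lem:star-cofib} is natural, as stated it is phrased object-wise.
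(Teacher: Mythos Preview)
Your proof is correct and follows the same approach as the paper, which simply states ``By Lemma~\ref{lem:star-cofib} it follows that $-\star X$ also preserves and reflects underlying monomorphisms'' as a one-line deduction from Lemma~\ref{lem:slash-mono}. You have merely made explicit the naturality of $\widehat\pi_\rho$ and the chaining of the two lemmas that the paper leaves implicit.
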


The following observation, which is an immediate consequence of Lemma \ref{lem:star-cofib}, is important for our application to Waldhausen $K$-theory in the introduction.  {Recall the definition of homotopical finiteness in $\comodx$ and in $\rx$ from  Notation \ref{notn:comodx} and \ref{notn:rx}.}

\begin{lem}\label{lem:hf}  Let $(Y, \rho)$ be a $X_{+}$-comodule. If $(Y,\rho)\star X$ is homotopically finite as a retractive space, then $(Y, \rho)$ is homotopically finite.
\end{lem}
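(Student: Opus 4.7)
The proof is essentially immediate from Lemma \ref{lem:star-cofib}. Recall that a retractive space $(Z,i,r)$ is homotopically finite, by Notation \ref{notn:rx}, precisely when the pointed simplicial set $Z/i(X)$ is weakly equivalent to a simplicial set with only finitely many nondegenerate simplices. Applied to the retractive space $(Y,\rho)\star X = (Y\star X, i_{\rho}, r_{\rho})$, this means that homotopical finiteness of $(Y,\rho)\star X$ amounts to homotopical finiteness of the quotient $(Y\star X)/i_{\rho}(X)$.

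The plan is then to invoke Lemma \ref{lem:star-cofib}, which supplies a natural isomorphism of pointed simplicial sets
\[
\widehat\pi_{\rho}\colon (Y\star X)/i_{\rho}(X)\xrightarrow{\cong} Y.
\]
An isomorphism is in particular a weak equivalence, so $Y$ is weakly equivalent to a simplicial set with finitely many nondegenerate simplices whenever $(Y\star X)/i_{\rho}(X)$ is. By the definition of homotopical finiteness in $\comodx$ given in Notation \ref{notn:comodx}, which only refers to the underlying pointed simplicial set, we conclude that $(Y,\rho)$ is homotopically finite. There is no real obstacle here, as the content of the lemma has already been packaged into Lemma \ref{lem:star-cofib}; the statement simply records the consequence needed for the Waldhausen $K$-theoretic applications in the introduction.
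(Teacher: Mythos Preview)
Your proof is correct and matches the paper's approach exactly: the paper simply states that the lemma is an immediate consequence of Lemma~\ref{lem:star-cofib}, and you have spelled out precisely why, unwinding the definitions from Notation~\ref{notn:rx} and Notation~\ref{notn:comodx}.
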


\subsection{Proof of Theorem \ref{thm:adjunct} and of Corollaries \ref{cor:pullback} and \ref{cor:astar-pushout}}

We first prove that $-\star X$ is indeed the right adjoint ot $-/X$, {which we know exists for category-theoretic reasons (Remark \ref{rmk:ALT}).}

\begin{proof}[Proof of Theorem \ref{thm:adjunct}]  We already established, in Lemmas \ref{lem:star-pres-equiv} and \ref{lem:slash-pres-equiv} , that both $-/X$ and $-\star X$ preserve $\op E_{*}$-equivalences.

Our first step is therefore to show that there are natural bijections
$$\xymatrix{\alpha: \rx\big( (Z,i,r), (Y, \rho)\star X\big)\ar@<0.8ex>[r]& \comodx\big( (Z/i(X), \rho_{i,r}), (Y, \rho)\big):\beta \ar @<0.8ex>[l].}$$
Let $f: (Z,i,r)\to (Y,\rho)\star X=(Y\star X, i_{\rho},r_{\rho})$ be a morphism of retractive spaces over $X$. Since $fi=i_{\rho}$, there is an induced morphism of pointed simplicial sets 
$$Z/i(X) \xrightarrow{\hat f} (Y\star X)/i_{\rho}(X)\xrightarrow {\widehat \pi_{\rho}}Y,$$
where the morphism on the right is the isomorphism of Lemma \ref{lem:star-cofib}.  Let $f^{\flat}=\widehat \pi_{\rho}\hat f$, so that
$$f^{\flat}\big([z]\big)=  \widehat\pi_{\rho}\big[f(z)\big].$$
An easy calculation shows that 
$$\rho f^{\flat}= (f^{\flat}\wedge X_{+})\rho_{i,r},$$
i.e., that $f^{\flat}: \big(Z/i(X), \rho_{i,r}\big)\to (Y, \rho)$ is a morphism of right $X_{+}$-comodules, so we can set $\alpha (f)=f^{\flat}$.

Let $g:\big(Z/i(X), \rho_{i,r}\big)\to (Y, \rho)$ be a morphism of right $X_{+}$-comodules. Recall that $p_{i}:Z\to Z/i(X)$ is the quotient map. It follows from the definition of $\rho_{i,r}$  that the diagram 
$$\xymatrix{Z\ar [rr]^(0.45){(gp_{i}\times r)\Delta_{Z}}\ar [d]_{gp_{i}}&& Y\times X \ar [d]^{\pi_{Y}}\\ 
Y\ar [rr]^{\rho}&& Y\wedge X_{+}
}$$
commutes, since $\rho g p_{i}=(g\wedge X_{+})\pi_{Z/i(X)}(p_{i}\times r)\Delta_{Z}=\pi_{Y}(gp_{i}\times r)\Delta_{Z}$.  There is therefore a unique simplicial map $g^{\sharp}: Z \to Y\star X$ such that 
$$\xymatrix{Z\ar [rr]^(0.45){(gp_{i}\times r)\Delta_{Z}}\ar [d]_{gp_{i}}\ar [drr]^{g^{\sharp}}&& Y\times X \\ 
Y&& Y\star X\ar [u]_{\bar \rho}\ar [ll]_{\pi_{\rho}}
}$$
commutes (cf.~Notation \ref{notn:map-name}).  Since $g$ is a pointed map, 
$$\pi_{\rho}i_{\rho}=gp_{i}i\quad \text{ and }\quad \bar \rho i_{\rho}=(gp_{i}\times r)\Delta_{Z}i,$$
whence the universal property of pullbacks implies that $g^{\sharp}i=\iota_{\rho}$.  On the other hand, 
$$r_{\rho}g^{\sharp}(z)=r_{\rho}\big(gp_{i}(z), r(z)\big)= r(z)$$
for all $z\in Z$. We conclude that 
$$g^{\sharp}: (Z,i,r)\to (Y\star X, i_{\rho}, r_{\rho})=(Y,\rho)\star X: z\mapsto \big(gp_{i}(z), r(z)\big)$$ 
is a morphism of retractive spaces over $X$, so that we can set $\beta(g)=g^{\sharp}$.

It is clear that both $\alpha$ and $\beta$ are natural in all variables.  Moreover for all $f\in \rx\big( (Z,i,r), (Y, \rho)\star X\big)$ and all $z\in Z$,
$$\beta\alpha(f)(z)=(f^{\flat})^{\sharp}(z)= \big( f^{\flat}p_{i}(z), r(z)\big)=\big(\widehat \pi_{\rho} p_{i_{\rho}}f(z), r(z)\big)=f(z),$$
where the last equality follows from the facts that $r_{\rho}f=r$ and that 
$$\widehat \pi_{\rho}p_{i_{\rho}}: Y\star X \to Y: (y,x) \mapsto y.$$
Finally, for all $g\in \comodx\big( (Z/i(X), \rho_{i,r}), (Y, \rho)\big)$,
$$\beta\alpha (g)\big([z]\big)=(g^{\sharp})^{\flat}\big([z]\big)=\widehat \pi_{\rho}\Big(\big[g^{\sharp}(z)\big]\Big)=\widehat \pi_{\rho}\Big( \big[gp_{i}(z), r(z)\big]\Big)=g\big([z]\big),$$
whence $\alpha$ and $\beta$ are indeed mutually inverse bijections, as desired.

Lemma \ref{lem:star-cofib} implies that the counit 
$$\ve_{(Y,\rho)}=\widehat \pi_{\rho}: \big((Y,\rho)\star X\big)/X\to (Y, \rho)$$ 
of this adjunction is a natural isomorphism.  To see that the unit map 
$$\eta_{(Z,i,r)}: (Z,i,r) \to \big((Z,i,r)/X\big)\star X: z \mapsto \big([z], r(x)\big)$$ 
is an $\op E_*$-equivalence for all retractive spaces over $X$, apply $\op E_*$ to  the following commuting diagram of split cofiber sequences.
$$\xymatrix{X\ar [d]_{i}\ar [rr]^{=}&&X\ar [d]^{i_{\rho_{i,r}}}\\
Z\ar [d]\ar [rr]^{\eta_{(Z,i,r)}}&&\big(Z/i(X)\big)\star X\ar [d]\\
Z/i(X)\ar [rr]^{=}&& Z/i(X)
}$$

To conclude, a simple computation shows that $a_{*}\circ (-/X')=(-/X)\circ a_{*}$ for all simplicial maps $a:X'\to X$. 
\end{proof}

Having proved Theorem \ref{thm:adjunct}, it is easy to establish the formula for pullbacks in $\comodx$.

\begin{proof}[Proof of Corollary \ref{cor:pullback}] Consider $(Y', \rho')\xrightarrow f(Y, \rho)\xleftarrow g (Y'', \rho'')$, a pair of  morphisms of right $X_{+}$-comodules with a common target.  
Since $-\star X: \comodx\to \rx$ is a right adjoint by Theorem \ref{thm:adjunct} and thus preserves limits, 
\begin{equation}\label{eqn:iso}
\Big((Y',\rho')\times _{(Y,\rho)}(Y'', \rho'')\Big)\star X \cong  \big((Y', \rho')\star X\big)\times _{(Y, \rho)\star X}\big((Y'', \rho'')\star X\big)
\end{equation}
in $\rx$.  According to Theorem \ref{thm:adjunct}, the unit of the $(-/X, -\star X)$-adjunction is a natural isomorphism, implying that the desired formula for the pullback in $\comodx$ can be obtained by applying the functor $-/X$ to (\ref{eqn:iso})

If $g=F_{X_{+}}h: F_{X_{+}}W'' \to F_{X_{+}}W$, then by Lemma \ref{lem:composite}, the righthand side of (\ref{eqn:iso}) is 
\begin{equation}\label{eqn:special-iso}
\big((Y', \rho')\star X\big)\times_{W\times X}(W''\times X) \cong \big((Y', \rho')\star X\big)\times_{W}W''.
\end{equation}
We obtain the formula for the pullback in this special case by applying the functor $-/X$ to the righthand side of (\ref{eqn:special-iso}).
\end{proof}

{To conclude this section, we prove that pullback functors on comodule categories preserve pushouts.

\begin{proof}[Proof of Corollary \ref{cor:astar-pushout}]  Because 
$$\xymatrix{\cat R_{X'}\ar [d]_{a_{*}}\ar [r]^(0.4){-/X'}&\cat{Comod}_{X'_{+}}\ar [d]^{a_{*}} \\ \rx\ar [r]^(0.4){-/X}&\comodx}$$
commutes, the corresponding diagram of right adjoints
$$\xymatrix{\cat R_{X'}&\cat{Comod}_{X'_{+}}\ar [l]_{-\star X'} \\ \rx\ar [u]^{a^{*}}&\comodx\ar [u]_{a^{*}}\ar [l]_{-\star X}}$$
also commutes. Since $(-/X')\circ (-\star X') = \id$ (Lemma \ref{lem:star-cofib}), it follows that
$$a^{*}=-/X' \circ a^{*}\circ (-\star X)
: \comodx \to \cat {Comod}_{X'_{+}}.$$
We conclude that $a^{*}: \comodx \to \cat {Comod}_{X'_{+}}$ preseves pushouts, since $-/X'$, {$a^{*}: \rx \to \cat R_{X'}$  (Lemma \ref{lem:pushforward}), and $-\star X$ (Lemma \ref{lem:star-pushout}) all preserve pushouts.}
\end{proof}}

\section{Model category structures on $\comodx$}\label{sec:model-cat}

Our goal in this section is to prove that both $\rx$ and $\comodx$ admit the model category structures left-induced (cf.~Appendix \ref{appendix}) from $\ksset$ and from $\esset$ and that the adjunction of Theorem \ref{thm:adjunct} is a Quillen equivalence with respect to the structure induced from $\esset$.    After stating our main result, we reduce its proof to establishing the existence of the desired model category structure on $\comodx$, which we prove in two ways.  We then realize the Koszul duality between modules over $\G X$, the Kan loop groups on $X$, and comodules over $X_{+}$, as a Quillen equivalence between the respective model categories.  {Finally, we consider the case of simplicial sets $X$ that are not necessarily simply connected, proving the existence of additional model category structures on $\rx$ and $\comodx$ that are defined in terms of twisted homology. These are the model category structures that we need in order to obtain a correct model for $A(X)$ when $X$ is not necessarily simply connected}

Recall the adjunctions of Remarks \ref{rmk:adjunct1} and \ref{rmk:adjunct2}.  Let $U': \rx \to \usset$ denote the forgetful functor, i.e., $U'(Z,i,r)=Z$.

\begin{thm}\label{thm:main}  Let $X$ be a simplicial set. Let $\WE_{\mathrm{Kan}}$ denote the class of weak homotopy equivalences of pointed simplicial sets, and $\Cof_{\mathrm{Kan}}$ the class of levelwise monomorphisms of pointed simplicial sets.  

There are cofibrantly generated, {left} proper
model category structures $(\rx )_{\mathrm{Kan}}$ and $(\comodx)_{\mathrm{Kan}}$  such that
\begin{enumerate}
\item $\WE_{\comodx}= U^{-1}(\WE_{\mathrm{Kan}})$ and $\Cof_{\comodx}=U^{-1}(\Cof_{\mathrm{Kan}})$,  and
\smallskip

\item $\WE_{\rx}= (U')^{-1}(\WE_{\mathrm{Kan}})$ and $\Cof_{\rx}= (U')^{-1}(\Cof_{\mathrm{Kan}})$. 
\end{enumerate}
\end{thm}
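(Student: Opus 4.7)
The plan is to handle the two parts separately: part (2) will follow from classical slice and pointing constructions, while part (1) requires the left-induction machinery recalled in Appendix \ref{appendix}. For part (2), I would first observe that $\rx$ is canonically isomorphic to the under-slice $(X,\id_{X})/(\usset/X)$ of the terminal object of the over-slice category $\usset/X$. Both slicing and under-slicing preserve cofibrantly generated, left proper model structures, with weak equivalences, cofibrations, and fibrations all inherited from the base. Thus $\rx$ automatically acquires a cofibrantly generated, left proper model structure in which $U'$ creates all three classes of maps, yielding (2); in fact, the generating (acyclic) cofibrations of $(\rx)_{\mathrm{Kan}}$ may be obtained by applying $\retx$ to those of $\ksset$.

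For part (1), the plan is to apply the left-induction theorem (Appendix \ref{appendix}, based on \cite{bhkkrs, hkrs}) to the adjunction $U\dashv F_{X_{+}}$ between $\comodx$ and $\ksset$. Local presentability of $\comodx$ is furnished by Lemma \ref{lem:comodx-bicomplete}, and $\ksset$ is combinatorial and left proper, so the only nontrivial hypothesis to verify is the acyclicity condition: every morphism in $\comodx$ with the right lifting property against the (would-be) cofibrations must be a weak equivalence. Verifying this acyclicity is the main obstacle. I would attempt it via a functorial path-object argument built on the cotensor structure of Lemma \ref{lem:comodx-simplcat}: for each $X_{+}$-comodule $(Y,\rho)$, the cotensor $(Y,\rho)^{\Delta[1]_{+}}$ furnishes a candidate path object, and the key technical step is to show that the factorization of the diagonal $(Y,\rho)\to (Y,\rho)\times (Y,\rho)$ through $(Y,\rho)^{\Delta[1]_{+}}$ is sent by $U$ to a weak equivalence followed by a fibration in $\ksset$. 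Cofibrant generation of $(\comodx)_{\mathrm{Kan}}$ is then immediate from the left-induction theorem.

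Left properness of $(\comodx)_{\mathrm{Kan}}$ will follow from two observations: first, the comonad $-\wedge X_{+}$ preserves colimits (being a left adjoint), so $U$ creates colimits and pushouts of cofibrations in $\comodx$ are computed in $\ksset$; second, $\ksset$ is itself left proper. As foreshadowed in the introduction, an alternative route bypassing the explicit path-object analysis is to right-induce from $(\rx)_{\mathrm{Kan}}$ along the adjunction $-/X\dashv -\star X$ of Theorem \ref{thm:adjunct}, exploiting that $-\star X$ preserves and reflects weak equivalences (Lemma \ref{lem:star-pres-equiv}) and underlying monomorphisms (Corollary \ref{cor:star-mono}), and then identifying the resulting right-transferred structure with the left-induced one via the characterization of cofibrations as $U$-detected monomorphisms.
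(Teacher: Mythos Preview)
Your treatment of part~(2) is essentially the paper's (Remark~\ref{rmk:rx-model}), though your parenthetical claim that the generating (acyclic) cofibrations of $(\rx)_{\mathrm{Kan}}$ are obtained by applying $\retx$ is incorrect: the generators are the maps $i_n\coprod\id_X:\partial\Delta[n]\coprod X\to\Delta[n]\coprod X$ together with a choice of structure map $\Delta[n]\to X$ (see Section~\ref{subsec:right}), not the products $i_n\times\id_X$ that $\retx$ would produce.

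For part~(1), however, there is a genuine gap. You propose to verify acyclicity via a \emph{path-object} argument using the cotensor $(Y,\rho)^{\Delta[1]_+}$, but this is the wrong dual. Theorem~\ref{thm:quillen-path} for left-induction requires good \emph{cylinder} objects, and that is what the paper constructs: the tensor $(Y,\rho)\totimes\Delta[1]_+$ of Lemma~\ref{lem:comodx-simplcat}. The asymmetry matters because $U$ is a \emph{left} adjoint and a simplicial functor, so it commutes with colimits and tensors---hence $U\big((Y,\rho)\totimes\Delta[1]_+\big)\cong Y\wedge\Delta[1]_+$ is visibly a good cylinder in $\ksset$---but $U$ does \emph{not} preserve limits or cotensors. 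In particular $U\big((Y,\rho)^{\Delta[1]_+}\big)$ is not $Y^{\Delta[1]_+}$, and $U\big((Y,\rho)\times(Y,\rho)\big)$ is not $Y\times Y$ (cf.~Corollary~\ref{cor:pullback} for how intricate limits in $\comodx$ are). So the ``key technical step'' you isolate cannot be carried out as stated.

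Your proposed alternative, right-inducing from $(\rx)_{\mathrm{Kan}}$ along $-/X\dashv -\star X$, also fails here: Lemma~\ref{lem:star-pres-equiv} concerns $\op E_*$-equivalences, not weak homotopy equivalences, and the paper explicitly notes (opening of Section~\ref{subsec:right}) that it is not known whether $(\comodx)_{\mathrm{Kan}}$ can be right-induced from $(\rx)_{\mathrm{Kan}}$, precisely because weak homotopy equivalences do not interact well enough with cofiber sequences for that argument to go through.
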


A localized version of the theorem above also holds, in which the relationship between the homotopy theory of comodules and of retractive spaces is even closer and in which the naturality of the homotopy theory of comodules is assured.

\begin{thm}\label{thm:main-local}  Let $X$ be a simplicial set, and let $\op E_{*}$ be a generalized reduced homology theory. Let $\WE_{\op E}$ denote the class of $\op E_{*}$-equivalences of pointed simplicial sets, and $\Cof_{\op E}$ the class of levelwise monomorphisms of pointed simplicial sets. 

There are cofibrantly generated, {left} proper
model category structures $(\rx )_{\op E}$ and $(\comodx)_{\op E}$ with respect to which the adjunction
$$\adjunct {(\rx)_{\op E}}{(\comodx)_{\op E}}{-/X}{-\star X}$$
is a Quillen equivalence and such that
\begin{enumerate}
\item $\WE_{\comodx, \op E}= U^{-1}(\WE_{\op E})$ and $\Cof_{\comodx, \op E}=U^{-1}(\Cof_{\op E})$,  and
\smallskip

\item $\WE_{\rx, \op E}= V^{-1}(\WE_{\op E})$ and $\Cof_{\rx, \op E}= V^{-1}(\Cof_{\op E})$.
\end{enumerate}
 
\smallskip

\noindent Moreover, if $a: X'\to X$ is a simplicial map, then the adjunctions
$$\adjunct{(\cat {R}_{X'})_{\op E}}{(\rx)_{\op E},}{a_{*}}{a^{*}}\text{ and }\adjunct{(\cat {Comod}_{X'_{+}})_{\op E}}{(\comodx)_{\op E},}{a_{*}}{a^{*}}$$
are Quillen pairs that are Quillen equivalences if $a$ is an $\op E_{*}$-equivalence.
\end{thm}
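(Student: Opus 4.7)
The plan is to construct the two model category structures by left-induction from $\esset$ along the left adjoints $V: \rx \to \esset$ and $U: \comodx \to \esset$, and then to leverage Theorem~\ref{thm:adjunct} to obtain the Quillen equivalence almost for free. Both $\rx$ and $\comodx$ are locally presentable (the former as a pointed over-category of $\cat{sSet}$, the latter by Lemma~\ref{lem:comodx-bicomplete}), so the essential content of the existence claim is the acyclicity condition of Appendix~\ref{appendix}. This is the standard main obstacle for left-induction; I would dispatch it by constructing explicit path objects in $\comodx$ (respectively $\rx$) via $F_{X_{+}}$ (respectively $\retx$) applied to functorial path objects in $\esset$, or more cleanly by first treating the Kan case and then applying Proposition~\ref{prop:left.local} to left-induce from the left Bousfield localization $\esset$ of $\ksset$. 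Cofibrant generation follows from the accessibility packaging in the Appendix, and left properness is inherited from $\esset$ because cofibrations in both structures are underlying monomorphisms (see Lemma~\ref{lem:slash-mono} and Corollary~\ref{cor:star-mono}).

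Granted the model structures, the Quillen pair $(-/X)\dashv(-\star X)$ is essentially automatic: a direct check using the identification $U\big((Z,i,r)/X\big) = Z/i(X) = V(Z,i,r)$ yields $U\circ(-/X)=V$, so $-/X$ sends $V$-created cofibrations and weak equivalences to their $U$-created counterparts. Promoting to a Quillen equivalence requires no cofibrant or fibrant replacement, since Theorem~\ref{thm:adjunct} already provides that the counit is a natural isomorphism and the unit is a natural $\op E_{*}$-equivalence on every object, while Lemmas~\ref{lem:star-pres-equiv} and~\ref{lem:slash-pres-equiv} ensure that both functors preserve all weak equivalences. In particular, the derived unit and counit are honest weak equivalences computed without replacement.

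For the naturality along $a: X' \to X$, the identity $U \circ a_{*} = U$ on the comodule side (since $a_{*}$ modifies only the coaction, leaving the underlying pointed simplicial set fixed by Remark~\ref{rmk:pushforward}) and the isomorphism $V \circ a_{*} \cong V$ on the retractive-space side (from the short computation that quotienting the pushout $Z'\cup_{X'}X$ by $a_{*}i'(X)$ collapses the adjoined copy of $X$ and identifies $i'(X')$ with the basepoint, recovering $Z'/i'(X')$) immediately make both pushforwards into left Quillen functors. When $a$ is itself an $\op E_{*}$-equivalence, I would deduce that the adjunctions are Quillen equivalences by verifying that the derived unit and derived counit are weak equivalences: on retractive spaces this reduces, via the split cofiber sequence decomposition of Lemma~\ref{lem:E-split} and left properness, to the statement that pushouts and pullbacks of $\op E_{*}$-equivalences along cofibrations are again $\op E_{*}$-equivalences, and the comodule version then transports across the Quillen equivalence $(-/X)\dashv(-\star X)$ established in the previous paragraph.
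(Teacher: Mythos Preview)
Your overall strategy matches the paper's closely, particularly for $\comodx$ and for the Quillen equivalence $(-/X)\dashv(-\star X)$ (which the paper dispatches exactly as you do, in Remark~\ref{rmk:quillen}). Two points deserve comment.

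First, a technical slip: for left-induction along a \emph{left} adjoint, the dual Quillen Path Object argument (Theorem~\ref{thm:quillen-path}) requires good \emph{cylinder} objects in the source category, not path objects. Applying the right adjoint $F_{X_{+}}$ (or $\retx$) to a path object in $\esset$ does not produce what is needed, since fibrations in a left-induced structure are not described levelwise and you cannot verify that the second leg is a fibration. The paper instead constructs cylinder objects in $\comodx$ via the tensoring of Lemma~\ref{lem:comodx-simplcat}: the sequence $(Y,\rho)\totimes\partial\Delta[1]_{+}\to (Y,\rho)\totimes\Delta[1]_{+}\to (Y,\rho)\totimes\Delta[0]_{+}$ is an underlying monomorphism followed by an underlying simplicial homotopy equivalence, hence works for both the Kan and $\op E$-local structures simultaneously. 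Your alternative of handling the Kan case first and then invoking Proposition~\ref{prop:left.local} is sound, but the Kan case still needs cylinder objects.

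Second, your proposal to obtain $(\rx)_{\op E}$ by left-induction along $V$ differs from the paper, which instead takes the standard slice model structure on $\rx$ inherited from $\uesset$ (Remark~\ref{rmk:rx-model}), with weak equivalences, cofibrations, \emph{and fibrations} all created by the forgetful functor $U'$ to $\uesset$. The paper then checks after the fact, via the splitting of Lemma~\ref{lem:E-split} and Lemma~\ref{lem:slash-mono}, that this structure satisfies $\WE_{\rx,\op E}=V^{-1}(\WE_{\op E})$ and $\Cof_{\rx,\op E}=V^{-1}(\Cof_{\op E})$, so the two descriptions coincide. The slice approach bypasses the acyclicity verification for $\rx$ entirely and, more importantly, makes \emph{right} properness of $(\rx)_{\op E}$ immediate. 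This is what actually drives Lemma~\ref{lem:quillen-rx}: the map $\hat a\colon a^{*}Z\to Z$ is a pullback of $a$ along the fibration $r\colon Z\to X$ (for $(Z,i,r)$ fibrant), not along a cofibration, so your phrase ``pullbacks of $\op E_{*}$-equivalences along cofibrations'' should be replaced by the genuine content of properness.
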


The existence of $(\rx)_{\mathrm{Kan}}$ and $(\rx)_{\op E}$ is established in Remark \ref{rmk:rx-model}; {see also Proposition~\ref{prop:cof.gen.comod}}.We give two proofs to establish the existence of $(\comodx)_{\mathrm{Kan}}$ and $(\comodx)_{\op E}$; Section \ref{sec:left-ind} contains a proof by left induction and Section \ref{subsec:right} by right induction.  The two proofs illuminate complementary aspects of the model category structure on $\comodx$, both of which come into play when we stabilize $(\comodx)_{\mathrm{Kan}}$ and $(\comodx)_{\op E}$  in  Section \ref{sec:stabilization}.

\begin{rmk}\label{rmk:cofwe-in-rx} Lemma \ref{lem:E-split} implies that $f\in \WE_{\rx, \op E}$ if and only if $\op E_*(f)$ is an isomorphism for any choice of basepoint in $X$.  Observe also that a morphism of either $X_{+}$-comodules or retractive spaces over $X$ is a cofibration, with respect to the Kan structure or the localized structure, if and only if the underlying simplicial map is a levelwise injection of simplicial sets (cf.~Lemma \ref{lem:slash-mono}).
\end{rmk}

\begin{rmk}\label{rmk:quillen} Note that once we have established the existence of $\op E_{*}$-local model category structures on $\rx$ and $\comodx$ such that conditions (1) and (2) of Theorem \ref{thm:main-local} hold, it follows immediately from Lemmas \ref{lem:slash-pres-equiv} and \ref{lem:slash-mono}  that the adjunction of Theorem \ref{thm:adjunct} is a Quillen pair.   

To see that the adjunction is even a Quillen equivalence, let $(Z,i,r)$ be a retractive space over $X$ and $(Y,\rho)$ a right $X_{+}$-comodule. Let $f: (Z,i,r)\to (Y,\rho)\star X$ be a morphism of retractive spaces, with transpose $f^{\flat}: (Z,i,r)/X \to (Y,\rho)$. Consider the commuting diagram of split cofiber sequences.
$$\xymatrix{X\ar [d]_{i}\ar [rr]^{=}&&X\ar [d]^{i_{\rho}}\\
Z\ar [d]\ar [rr]^{f}&&Y\star X\ar [d]\\
Z/i(X)\ar [rr]^{ f^{\flat}}&& Y
}$$
It is clear that $\op E_*(f)$ is an isomorphism for any choice of basepoint in $X$ if and only if $\op E_*(f^{\flat})$ is an isomorphism, whether or not $(Y, \rho)$ is a fibrant object of $\comodx$.  Thus, to prove Theorem \ref{thm:main-local}, it suffices to establish the existence of the desired model category structures.
\end{rmk}

\begin{rmk}\label{rmk:rx-model}  By a standard argument (cf.~\cite[Proposition 1.1.8]{hovey}), for any model category $\M$ and any object $X$ in $\cat M$, the category $\rx(\M)$ of retractive objects in $\M$ over $X$ inherits a model category structure from $\M$, in which a morphism $f: (Z,i,r)\to (Z', i', r')$ is a fibration (respectively, cofibration or weak equivalence) if and only if the underlying morphism of simplicial sets $f:Z\to Z'$ is of the same type. 

 In particular, when $\M=\usset$, both of the model category structures on $\rx$ are   left proper and simplicial, since $(\usset)_{\mathrm{Kan}}$ and $(\usset)_{\op E}$ are both left proper and simplicial.  By Remark \ref{rmk:cofwe-in-rx}, the $\op E_{*}$-local model structure satisfies $\WE_{\rx, \op E}= V^{-1}(\cat {WE}_{\op E})$ and $\cat{Cof}_{\rx}= V^{-1}(\cat {Cof}_{\op E})$.  Moreover, the adjunction $V\dashv \retx$ is a Quillen adjunction with respect to this induced model structure, by \cite[Proposition 1.3.5]{hovey}.
\end{rmk}

 To see that $a_{*}: (\cat R_{X'})_{\op E}\to (\rx)_{\op E}$ is the left member of a Quillen equivalence when $a:X'\to X$ is an $\op E_{*}$-equivalence, we apply the following general result.  
Recall the adjunction of Lemma \ref{lem:pushforward}.

\begin{lem}\label{lem:quillen-rx} Let $\cat M$ be a proper model category.  If $a:X'\to X$ is a morphism in $\cat M$, then the adjunction
 $$\adjunct{\cat {R}_{X'}(\cat M)}{\rx(\cat M),}{a_{*}}{a^{*}}$$
 is a Quillen pair that is a Quillen equivalence if $a$ is a weak equivalence.
\end{lem}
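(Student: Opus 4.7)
The plan is to establish the lemma in two stages: first to verify that $(a_{*}, a^{*})$ forms a Quillen pair (which requires no properness hypothesis), then to upgrade this to a Quillen equivalence under the assumption that $a$ is a weak equivalence, using left and right properness of $\M$.

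For the Quillen pair, I would use that fibrations and acyclic fibrations in both $\cat{R}_{X'}(\M)$ and $\rx(\M)$ are created by the forgetful functor to $\M$ (see Remark \ref{rmk:rx-model}). By the construction of $a^{*}$ in Lemma \ref{lem:pushforward}, the underlying object of $a^{*}(Z,i,r)$ is the pullback $Z\times_{X} X'$ of $r:Z\to X$ along $a$. Since pullbacks in any model category preserve fibrations and acyclic fibrations, $a^{*}$ is a right Quillen functor, so its left adjoint $a_{*}$ is left Quillen.

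For the Quillen equivalence when $a$ is a weak equivalence, I would apply the standard criterion: for every cofibrant $(Z', i', r')$ in $\cat{R}_{X'}(\M)$ and every fibrant $(W, j, s)$ in $\rx(\M)$, a morphism $f: a_{*}(Z', i', r') \to (W, j, s)$ is a weak equivalence if and only if its transpose $f^{\flat}: (Z', i', r') \to a^{*}(W, j, s)$ is. Note that cofibrancy in $\cat{R}_{X'}(\M)$ means $i':X'\to Z'$ is a cofibration in $\M$, while fibrancy in $\rx(\M)$ means $s:W\to X$ is a fibration. Applying left properness of $\M$ to the defining pushout of $a_{*}Z'$ shows that $\bar a: Z' \to a_{*}Z'$ is a weak equivalence, being the pushout of the weak equivalence $a$ along the cofibration $i'$. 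Dually, right properness applied to the defining pullback of $a^{*}W$ shows that $\hat a: a^{*}W \to W$ is a weak equivalence, being the pullback of $a$ along the fibration $s$. Moreover, $f^{\flat}$ is by construction the unique map making $\hat a\circ f^{\flat}=f\circ \bar a$, so the square
$$\xymatrix{Z' \ar[r]^{f^{\flat}} \ar[d]_{\bar a}^{\sim} & a^{*}W \ar[d]^{\hat a}_{\sim} \\ a_{*}Z' \ar[r]^{f} & W}$$
commutes, and 2-out-of-3 applied to weak equivalences in $\M$ yields the desired equivalence on underlying objects, hence in both retractive categories.

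The one subtlety to verify is that the square above is indeed commutative as asserted, which follows directly from the universal property of the pullback $a^{*}W$ used to define $f^{\flat}$ (the relation $s f\bar a=a_{*}r'\bar a=a r'$ guarantees the existence and uniqueness of $f^{\flat}$ with $\hat a f^{\flat}=f\bar a$). No other steps present genuine difficulty: the argument is essentially a packaged application of properness together with the definitions of $a_{*}$ and $a^{*}$ as pushout and pullback constructions.
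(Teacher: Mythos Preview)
Your proposal is correct and follows essentially the same route as the paper. The only cosmetic difference is that for the Quillen pair you verify that $a^{*}$ preserves fibrations and acyclic fibrations (via stability of these classes under pullback), whereas the paper verifies that $a_{*}$ preserves cofibrations and acyclic cofibrations (via lifting properties); for the Quillen equivalence both arguments use left and right properness to make $\bar a$ and $\hat a$ weak equivalences, then invoke 2-out-of-3 on the commuting square relating $f$ and its transpose.
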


\begin{proof}  We begin by checking that $a_{*}$ is indeed a left Quillen functor, for any morphism $a:X'\to X$ in $\M$.  
Let $f:(Z_{1},i_{1},r_{1})\to (Z_{2},i_{2},r_{2})$ be a cofibration in $\rx(\M)$, i.e., $f:Z_{1}\to Z_{2}$ is a cofibration in $\M$. 
$$
\xymatrix {X\ar @{=}[d] &X'\ar [l]_{a}\ar [r]^{i_{1}}\ar @{=}[d]&Z_{1}\ar [d]^{f}\\ X&X'\ar [l]_{a}\ar [r]^{i_{2}} &Z_{2},}
$$
It is straightforward to check that $a_{*}f: a_{*}Z_{1} \to a_{*}Z_{2}$ is also a cofibration, using the characterization of cofibrations via the left lifting property with respect to acyclic fibrations. Similarly, if $f$ is an acyclic cofibration, $a_* f$ is also, since it lifts with respect to fibrations.

Now suppose that $a:X'\to X$ is a weak equivalence in $\M$.  Let $(Z',i',r')$  be a cofibrant object in  $\cat R_{X'}(\M)$ and $(Z,i,r)$ a fibrant object $\cat R_{X}(\M)$, i.e., $i':X'\to Z'$ and $r:Z\to X$ are a cofibration and a fibration in $\M$, respectively.   Since $\M$ is proper, both $\bar a: Z'\to a_{*}Z'$ and $\hat a: a^{*}Z \to Z$ are weak equivalences in $\M$.  

Let $f:(Z',i',r') \to a^{*}(Z,i,r)$ be a morphism in $\cat R_{X'}(\M)$.  There is a commuting diagram in $\M$
$$\xymatrix {Z'\ar [d]_{f}\ar [r]^{\bar a}_{\simeq}&a_{*}Z'\ar [d]^{a_{*}f}\ar @/^1pc/[ddr]^{f^{\flat}}\\
a^{*}Z\ar[r]^{\bar a}\ar @/_1pc/ [drr]^{\hat a}_{\simeq}&a_{*}a^{*}Z\ar [dr]^{\ve_{Z}}\\
&&Z,}$$
where $\ve_{Z}$ is the counit of the adjunction.  It follows that $f$ is a weak equivalence if and only if $f^{\flat}$ is a weak equivalence, i.e., $a_{*}\dashv a^{*}$ is indeed a Quillen equivalence.
\end{proof}

Since $(\usset)_{\op E}$ is a proper model category, once we have established the existence of the model category structure $(\comodx)_{\op E}$, and therefore its Quillen equivalence with $(\rx)_{\op E}$ (cf.~Remark \ref{rmk:quillen}),  we obtain the next result as an immediate consequence of Lemma \ref{lem:quillen-rx}.  Note that it is easy to check that $a_{*}:(\cat {Comod}_{X'_{+}})_{\op E}\to (\cat {Comod}_{X_{+}})_{\op E}$ is left Quillen.

\begin{cor}\label{cor:we} If $a:X'\to X$ is a simplicial map, then the adjunction
 $$\adjunct{(\cat {Comod}_{X'_{+}})_{\op E}}{(\comodx)_{\op E},}{a_{*}}{a^{*}}$$
 is a Quillen pair that is a Quillen equivalence if $a$ is an $\op E_{*}$-equivalence.
\end{cor}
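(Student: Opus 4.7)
The plan is to follow the sketch indicated immediately before the statement: first verify that $a_* \dashv a^*$ is a Quillen pair, then use the naturality square from Theorem~\ref{thm:adjunct} to transport the Quillen equivalence from retractive spaces (Lemma~\ref{lem:quillen-rx}) to comodules via a two-out-of-three argument.

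For the Quillen pair claim, I would observe that the pushforward $a_{*}\colon \cat{Comod}_{X'_{+}} \to \comodx$, specified on objects by $(Y,\rho) \mapsto (Y, (Y\wedge a_{+})\rho)$, is the identity on underlying pointed simplicial sets and on underlying simplicial maps. Since both cofibrations and weak equivalences in $(\cat{Comod}_{X'_{+}})_{\op E}$ and in $(\comodx)_{\op E}$ are created by the forgetful functors $U$ to $\esset$ (Theorem~\ref{thm:main-local}(1), applied at both $X'$ and $X$), it is immediate that $a_{*}$ preserves cofibrations and acyclic cofibrations. Hence $a_{*} \dashv a^{*}$ is a Quillen adjunction for any $a$.

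For the Quillen equivalence claim, suppose now that $a\colon X' \to X$ is an $\op E_{*}$-equivalence. The last assertion of Theorem~\ref{thm:adjunct} gives a commuting square of left adjoints
$$\xymatrix{(\cat R_{X'})_{\op E}\ar [d]_{a_{*}}\ar [r]^(0.45){-/X'}&(\cat{Comod}_{X'_{+}})_{\op E}\ar [d]^{a_{*}} \\ (\rx)_{\op E}\ar [r]^(0.45){-/X}&(\comodx)_{\op E}.}$$
The two horizontal functors are left Quillen equivalences by Theorem~\ref{thm:main-local}. Since $(\usset)_{\op E}$ is proper, and $(\cat R_{X'})_{\op E}$ and $(\rx)_{\op E}$ inherit this properness (Remark~\ref{rmk:rx-model}), Lemma~\ref{lem:quillen-rx} applies to show that the left vertical functor $a_{*}\colon (\cat R_{X'})_{\op E} \to (\rx)_{\op E}$ is a Quillen equivalence. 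Consequently, the composite around either route of the square is a composite of two Quillen equivalences and is itself a Quillen equivalence. Applying two-out-of-three for Quillen equivalences to the composite $a_{*}\circ(-/X') = (-/X)\circ a_{*}$, in which $-/X'$ is known to be a Quillen equivalence, yields that the right vertical functor $a_{*}\colon (\cat{Comod}_{X'_{+}})_{\op E} \to (\comodx)_{\op E}$ is a Quillen equivalence, as required.

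I do not foresee any real obstacle here: the verification that $a_{*}$ is left Quillen is immediate from the fact that it does not alter underlying simplicial sets, and the Quillen equivalence reduces formally to the retractive-space case via the square from Theorem~\ref{thm:adjunct}. The only point requiring some care is confirming properness of the $\op E_{*}$-local structures to invoke Lemma~\ref{lem:quillen-rx}, which the paper has already granted.
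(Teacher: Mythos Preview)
Your proposal is correct and matches the paper's own approach exactly: the paper also notes that $a_{*}$ is left Quillen because it is the identity on underlying simplicial sets, then deduces the Quillen equivalence by combining Lemma~\ref{lem:quillen-rx} on retractive spaces with the Quillen equivalences $-/X'$ and $-/X$ via the commuting square of Theorem~\ref{thm:adjunct} and two-out-of-three.
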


{In the next two sections we provide two proofs of the existence of the desired model category structures on $\comodx$, which illuminate complementary aspects of its nature, both of which we need in Section \ref{sec:stabilization}, when we consider spectra of comodules.}

\subsection{Model category structures on $\comodx$: proof by left-induction}\label{sec:left-ind}

Starting from the adjunction $U\vdash F_{X_{+}}$, we cannot call upon the standard methods of left-to-right transfer of model category structure for cofibrantly generated model categories to prove the existence of the desired model category structure on $\comodx$ from that of $\ksset$ or $\esset$, as  $\sset$  is the target of the left adjoint rather than the right adjoint. We therefore apply {Theorem \ref{thm:quillen-path}} instead, to obtain a left-induced model category structure.

Recall the monoidal structure on $\comodx$ of Lemma \ref{lem:comodx-monoid}. 

\begin{thm}\label{thm:comodx} Let $\op E_{*}$ be any generalized reduced homology theory. The adjunction $U\dashv F_{X_{+}}$  left-induces a {left} proper, simplicial, {cofibrantly generated} model category structure on the category $\comodx$ of right $X_{+}$-comodules, {with weak equivalences either the weak homotopy equivalences or the $\op E_{*}$-equivalences.}  Moreover, if $(X, x_{0}, \mu)$ is a simplicial monoid, then $\big((\comodx)_{\mathrm{Kan}}, \otimes, (S^{0}, \rho_{u})\big)$ and $\big((\comodx)_{\op E}, \otimes, (S^{0}, \rho_{u})\big)$  are both monoidal model categories.
\end{thm}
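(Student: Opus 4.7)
The plan is to invoke the left-induction theorem from the appendix (drawn from \cite{bhkkrs, hkrs}) applied to the adjunction $U\dashv F_{X_+}$, with $\comodx$ the domain of the left adjoint $U$. The required hypotheses are that $\comodx$ be locally presentable, provided by Lemma~\ref{lem:comodx-bicomplete}, that $\ksset$ and $\esset$ be combinatorial model categories, both standard, and, crucially, an acyclicity condition that rules out having too many weak equivalences in the candidate left-induced structure.

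To verify acyclicity, I would exploit the adjunction $-/X\dashv -\star X$ of Theorem~\ref{thm:adjunct} together with the model structure on $\rx$ from Remark~\ref{rmk:rx-model}. The functor $-\star X$ preserves and reflects both weak equivalences (Lemma~\ref{lem:star-pres-equiv}) and underlying monomorphisms (Corollary~\ref{cor:star-mono}), while $-/X$ preserves and reflects weak equivalences (Lemma~\ref{lem:slash-pres-equiv}) and monomorphisms (Lemma~\ref{lem:slash-mono}); moreover, the unit of the adjunction is an $\op E_*$-equivalence (Theorem~\ref{thm:adjunct}). A good path object on $(Y,\rho)\star X$ built from the simplicial cotensor with $\Delta[1]_+$ in $\rx$ therefore transports along $-/X$ to yield a good path object for $(Y,\rho)$ in $\comodx$ with the properties required by the left-induction acyclicity criterion. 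This transport is the main obstacle, since all of the geometric content of the left-induction is concentrated here; once it is accomplished, the remaining items are structural.

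Left properness follows from the facts that cofibrations in $\comodx$ are levelwise monomorphisms and that $U$ preserves pushouts, so pushouts along cofibrations are computed in $\sset$ where left properness is known. Simpliciality is inherited via Lemma~\ref{lem:comodx-simplcat}, whose construction makes $U$ a simplicial functor commuting with tensor and cotensor; hence the pushout-product axiom for the tensoring reduces, after applying $U$, to the SM7 axiom in $\ksset$ or $\esset$. Cofibrant generation is supplied directly by the left-induction theorem, which produces generating sets by applying a left adjoint $L$ of $U$ to the generating (acyclic) cofibrations of $\sset$; such an $L$ exists because $U$ preserves colimits and $\comodx$ is locally presentable.

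Finally, for the monoidal statement, assume $(X,x_0,\mu)$ is a simplicial monoid and endow $\comodx$ with the monoidal structure $\otimes$ of Lemma~\ref{lem:comodx-monoid}. The key observation is that $U$ is strong monoidal, since by construction $U\big((Y,\rho)\otimes(Y',\rho')\big)=Y\wedge Y'$. Consequently, $U$ sends the pushout-product of any two morphisms in $\comodx$ to the pushout-product of the underlying maps in $\sset$, so the pushout-product axiom for $(\comodx)_{\mathrm{Kan}}$ and $(\comodx)_{\op E}$ reduces cleanly to the corresponding axiom for $\ksset$ and $\esset$, using the characterizations of cofibrations and weak equivalences via $U$. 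This compatibility of a monoidal structure with a left-induced model structure is precisely the content of Proposition~\ref{prop:monoid-axiom} of the appendix, which I would cite to close the argument.
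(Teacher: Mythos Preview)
Your proposal contains a genuine gap at the acyclicity step. For a \emph{left}-induced model structure, the relevant criterion (Theorem~\ref{thm:quillen-path}, the dual of the Quillen Path Object Argument) requires good \emph{cylinder} objects, not path objects. You propose building a path object on $(Y,\rho)\star X$ in $\rx$ via the simplicial cotensor and transporting it along $-/X$; but $-/X$ is a left adjoint, so it does not preserve the product $[(Y,\rho)\star X]\times[(Y,\rho)\star X]$ forming the target of the path-object factorization, nor does it have any reason to preserve fibrations. Thus the transported object is not a path object for $(Y,\rho)$ in $\comodx$, and in any case a path object is not what the left-induction criterion asks for.

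The paper's proof is both simpler and avoids $\rx$ entirely at this stage: it uses the simplicial tensoring of Lemma~\ref{lem:comodx-simplcat} to build the cylinder directly, namely
\[
(Y,\rho)\totimes \partial\Delta[1]_+\xrightarrow{Y\wedge j_+}(Y,\rho)\totimes\Delta[1]_+\xrightarrow{Y\wedge q_+}(Y,\rho)\totimes\Delta[0]_+,
\]
which is manifestly a good cylinder since $U$ takes it to $Y\wedge\partial\Delta[1]_+\hookrightarrow Y\wedge\Delta[1]_+\xrightarrow{\sim}Y$. Had you dualized your idea and transported a \emph{cylinder} from $\rx$ via $-/X$, the argument would in fact go through: $-/X$ preserves coproducts, monomorphisms (Lemma~\ref{lem:slash-mono}), and weak equivalences (Lemma~\ref{lem:slash-pres-equiv}), and the counit is an isomorphism. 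But this detour through $\rx$ is unnecessary once one has the simplicial tensoring on $\comodx$ itself. The remaining parts of your proposal (left properness via all objects being cofibrant, simpliciality via Lemma~\ref{lem:comodx-simplcat}, and the monoidal statement via Proposition~\ref{prop:monoid-axiom}) are correct and match the paper.
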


 \begin{proof} It is well known that $\sset$ is locally presentable, since it is a presheaf category, and $\cat {Set}$ is locally finitely presentable, and that $\ksset$ is cofibrantly generated \cite[Lemma 2.1.21]{hovey}. Bousfield showed that $\esset $ is cofibrantly generated as well ~\cite{bousfield}.  

Recall that $\comodx $ is locally presentable (Lemma \ref{lem:comodx-bicomplete}). Since all objects in $\ksset$ and $\esset$ are cofibrant, Theorem \ref{thm:quillen-path} implies that in order to conclude that the desired model category structures exist, it suffices to show that there exist good cylinder objects in $\comodx$, with respect to weak equivalences and cofibrations created in $\ksset$ or $\esset$.

We use the tensoring of $\comodx$ over $\sset$ (Lemma \ref{lem:comodx-simplcat}) to define the required cylinder.  For any $X_{+}$-comodule $(Y, \rho)$, consider the morphisms in $\comodx$
\begin{equation}\label{eqn:cylinder} (Y,\rho)\totimes \partial \Delta [1]_{+} \xrightarrow {Y\wedge j_{+}} (Y,\rho)\totimes \Delta [1]_{+} \xrightarrow {Y\wedge q_{+}} (Y,\rho)\totimes \Delta [0]_{+}
\end{equation}
induced by the obvious sequence of simplicial maps
$$\partial \Delta [1] \xrightarrow j  \Delta [1]\xrightarrow q \Delta [0].$$
It is clear that $Y\wedge j_{+}$ is  a levelwise monomorphism and therefore a cofibration in both $\ksset$ and $\esset$, while $Y\wedge q_{+}$ is a simplicial homotopy equivalence and therefore a weak equivalence in both $\ksset$ and $\esset$.  It follows that (\ref{eqn:cylinder}) is a good cylinder with respect to either structure and therefore that the left-induced model category structures on $\comodx$ do exist.

Now suppose that $(X, x_{0}, \mu)$ is a simplicial (commutative) monoid.  It is well known that  $\big (\ksset, \wedge , S^{0}\big)$ is a symmetric monoidal model category \cite[Corollary 4.2.10]{hovey}. Moreover, if  $\op E_{*}$ is any generalized reduced homology theory, then  $\big (\esset, \wedge , S^{0}\big)$ is a 
symmetric monoidal model category as well.  Indeed, for all monomorphisms $i:A\to X$, $j:B\to Y$, the induced map  
$${i\hatsquare j: (A\wedge Y)\coprod_{A\wedge B} (X\wedge B) \to X\wedge Y}$$
is clearly also a monomorphism, of which the cofiber is $X/A \wedge Y/B$.  If $i\in \mathsf{WE}_{\op E}$, then $\op E_*(X/A)=0$, whence $\op E_*(X/A \wedge Y/B)=0$, and so ${i\hatsquare j} \in \mathsf{WE}_{\op E}$.  Since $U: \comodx \to \sset$ is strong monoidal,  Proposition \ref{prop:monoid-axiom} implies that $\big( (\comodx)_{\mathrm{Kan}}, \otimes, (S^{0}, \rho_{u})\big)$ and $\big( (\comodx)_{\op E}, \otimes, (S^{0}, \rho_{u})\big)$ are also  
(symmetric) monoidal model categories, as desired.
\end{proof}

\subsection{The $\op E_*$-local structure on $\comodx$: description via right-induction}\label{subsec:right}

For any generalized reduced homology theory $\op E_*$, we provide in this section an explicit description of the generating cofibrations and generating acyclic cofibrations in $\op E_*$-local model structures on both $\rx$ and $\comodx$.  We show in particular that  $(\comodx)_{\op E}$ can be right-induced from $(\rx)_{\op E}$, which is in turn lifted from  $(\usset)_{\op E}$.  We do not know whether $(\comodx)_{\mathrm{Kan}}$ can be right-induced from $(\rx)_{\mathrm{Kan}}$, as key steps in this argument rely on good interactions between weak equivalences and cofiber sequences (cf.~Lemma~\ref{lem:star-pres-equiv}), for which weak homtopy equivalences are not well adapted.

We begin by recalling the generating (acyclic) cofibrations for $\uesset$. The cofibrations in $\uesset$ are exactly the monomorphisms and are generated by the set 
$$I_{\partial} = \{ i_n: \partial \Delta[n] \to \Delta[n]\mid n\geq 0\}.$$ 
Fix an infinite cardinal $c_{\op E}$ that is at least equal to the cardinality of $\op E_*(pt)$.  By~\cite[11.3]{bousfield}, the acyclic cofibrations are generated by the set $J_{\op E}$ of all monomorphisms $j: A \to B$ such that $j$ is an ${\op E}_*$-equivalence, and the number of non-degenerate simplices in $B$ is at most $c_{\op E}$.  

The existence of the $\op E_{*}$-local model structure on $\rx$ is discussed in Remark~\ref{rmk:rx-model}.  To see how the generating (acyclic) cofibrations lift from $\uesset$, note that
the category $\rx$ is the pointed category (in the sense of \cite[1.1.8]{hovey}) of the over category 
$\usset / X$.   The cofibrations in $\usset / X$ are generated by the set 
$$\I_{/X} = \{ (i_n, g): \partial \Delta[n] \to \Delta[n]\mid g: \Delta[n]\to X, n\geq 0\},$$
with $g$ providing the structure over $X$.  
The cofibrations in $\rx$ are generated by the set 
$$\I_{X, \op E} = \{(i_n \du id_X, g): \partial \Delta[n] \du X \to \Delta[n] \du X\mid g: \Delta[n]\to X, n\geq 0\},$$ where $g \du \id_X$ provides the structure over $X$.    Similarly, the acyclic cofibrations are generated by the set 
$$\J_{X, \op E} = \{ (j \du \id_X, g): A \du X \to B \du X \mid g: B\to X, j:A\to B \in J_{\op E}\}.$$  
See \cite{hh.over} for more details.

It turns out that standard transfer methods allow us to right-induce $(\comodx)_{\op E}$ from $(\rx)_{\op E}$, which has the benefit of enabling us to describe the generating cofibrations and generating acyclic cofibrations explicitly.

\begin{prop}\label{prop:cof.gen.comod}
The adjunction $(-/X)\dashv (-\star X)$ right-induces from $(\rx)_{\op E}$ a model category category stucture on $\comodx$, which is exactly the structure $(\comodx)_{\op E}$ of Theorem \ref{thm:comodx}.
\end{prop}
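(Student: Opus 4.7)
The plan is to apply Kan's transfer theorem for right-induced model structures to the adjunction $(-/X) \dashv (-\star X)$, starting from the cofibrantly generated structure on $(\rx)_{\op E}$ with generating sets $\I_{X,\op E}$ and $\J_{X,\op E}$ recalled just above the proposition. The candidate generating (acyclic) cofibrations for the right-induced structure on $\comodx$ are $-/X(\I_{X,\op E})$ and $-/X(\J_{X,\op E})$, which can be described concretely as the maps $(\partial\Delta[n]_{+},\rho_{g})\to (\Delta[n]_{+},\rho_{g})$, where $\rho_{g}$ is the comodule structure induced by a simplicial map $g:\Delta[n]\to X$, and analogously for the acyclic generators coming from $J_{\op E}$. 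The smallness hypothesis is immediate, since the underlying pointed simplicial sets of these domains are small and smallness transfers across the comonadic forgetful adjunction $U\dashv F_{X_{+}}$ (Remark~\ref{rmk:adjunct1}).

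The crux is verifying that every relative $-/X(\J_{X,\op E})$-cell complex in $\comodx$ is an $\op E_{*}$-equivalence. Given such a cell complex $f:(Y,\rho)\to (Y',\rho')$, I would apply $-\star X$ and analyze it in $(\rx)_{\op E}$. Lemma~\ref{lem:star-pushout} guarantees that $-\star X$ preserves pushouts, and the explicit pullback formula defining $Y\star X$ as $Y\times_{Y\wedge X_{+}}(Y\times X)$ commutes with filtered colimits in $\usset$ (finite limits commute with filtered colimits in a presheaf category), so $-\star X$ also preserves transfinite compositions. Consequently $f\star X$ is a relative cell complex in $\rx$ built from maps of the form $(j/X)\star X$ with $j\in\J_{X,\op E}$. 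Each such map is a monomorphism by Corollary~\ref{cor:star-mono} applied to $j/X$ (which is a monomorphism by Lemma~\ref{lem:slash-mono}) and an $\op E_{*}$-equivalence by Lemmas~\ref{lem:slash-pres-equiv} and~\ref{lem:star-pres-equiv}, hence an acyclic cofibration in $(\rx)_{\op E}$. It follows that $f\star X$ is an $\op E_{*}$-equivalence in $\rx$, and since $-\star X$ reflects $\op E_{*}$-equivalences (Lemma~\ref{lem:star-pres-equiv}), $f$ itself is an $\op E_{*}$-equivalence. This completes the verification of Kan's transfer hypotheses, establishing the existence of the right-induced structure.

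To identify the right-induced structure with $(\comodx)_{\op E}$ from Theorem~\ref{thm:comodx}, observe that a morphism in $\comodx$ is a weak equivalence in the right-induced structure if and only if $-\star X$ sends it to an $\op E_{*}$-equivalence in $\rx$, which by Lemma~\ref{lem:star-pres-equiv} is equivalent to its underlying pointed simplicial map being an $\op E_{*}$-equivalence, i.e., to being a weak equivalence in $(\comodx)_{\op E}$. Similarly, the cofibrations of the right-induced structure are the retracts of relative $-/X(\I_{X,\op E})$-cell complexes; running through the same pushout/transfinite-composition analysis with $-\star X$ together with Corollary~\ref{cor:star-mono} shows these are precisely the underlying monomorphisms, which are the cofibrations of $(\comodx)_{\op E}$. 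Since a model structure is determined by its cofibrations and weak equivalences, the two structures coincide.

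The main obstacle I expect is the verification that $-\star X$ preserves the transfinite compositions needed to analyze the generated saturated class, since $-\star X$ is only known to be a right adjoint; however, this is resolved by the explicit pullback description of $-\star X$ together with the compatibility of filtered colimits with finite limits in $\usset$. Everything else is a routine application of the lemmas already assembled in Sections~\ref{sec:retract-comod} and~\ref{sec:adjunction}.
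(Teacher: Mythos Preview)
Your approach is broadly correct and establishes the right-induced structure, but it diverges from the paper in two places worth noting.

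For the acyclicity condition, you route everything through $-\star X$, invoking Lemma~\ref{lem:star-pushout} and a filtered-colimit argument to show $-\star X$ preserves cell complexes. This works, but the paper's argument is far simpler: colimits in $\comodx$ are created in $\sset$ (since $U$ is comonadic), and the maps in $\J_{c}=-/X(\J_{X,\op E})$ are underlying acyclic cofibrations in $\esset$. Closure of acyclic cofibrations under pushouts and transfinite composition in $\esset$ then gives the acyclicity condition immediately, with no need to analyze $-\star X$ on colimits at all. The same observation handles smallness directly.

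For the cofibration identification, your argument has a genuine gap. The direction ``$\I_{c}$-cofibrations are monomorphisms'' does follow from your analysis (or, again, more simply from colimits being created in $\sset$). But the converse, that every monomorphism $f$ in $\comodx$ is an $\I_{c}$-cofibration, does \emph{not} follow from ``running through the same pushout/transfinite-composition analysis with $-\star X$'': that analysis takes $\I_{c}$-cell complexes and shows their images under $-\star X$ are monomorphisms, which is the wrong direction. The paper handles this with a retract argument: factor $f=pi$ with $i$ an $\I_{c}$-cofibration and $p$ an acyclic fibration in the newly established structure; apply $-\star X$; since $f\star X$ is a monomorphism (Corollary~\ref{cor:star-mono}) and hence a cofibration in $(\rx)_{\op E}$, while $p\star X$ is an acyclic fibration, lift to exhibit $f\star X$ as a retract of $i\star X$; then apply $-/X$ and use that the counit is an isomorphism to conclude $f$ is a retract of $i$. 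You should supply this (or an equivalent) argument.
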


\begin{proof}
Following \cite[11.3.2]{hirschhorn}, the generating cofibrations for the right-induced model structure on 
$\comodx$, if it exists, are the image under the functor $- / X$ of the generators $\I_{X, \op E}$ for $\rx$.   Since $(A \du X) / X \iso A_+$,
the set of generating cofibrations in $\comodx$ should then be the set 
$$\I_{c} = \{ \widetilde{(i_n, g)}: \partial \Delta[n]_+ \to \Delta[n]_+\mid g: \Delta[n] \to X , n\geq 0\}.$$ 
Here a map $g: B \to X$ induces a comodule structure on $B_+$ given by
$$(B ,g)_+: B_+ \to (B \times X)_+ \iso B_+ \sm X_+.$$  Similarly, the set of generating acyclic cofibrations in $\comodx$ should be the set 
$$\J_{c} = \{\widetilde{(j, g)}: A_+ \to B_+ \mid g: B \to X, j:A\to B \in \J_{\op E}\}.$$ 
Note that all of the maps $\widetilde{(j, g)}$ are monomorphisms and $\op E_*$-equivalences.

By \cite[11.3.2]{hirschhorn}, to check that the adjunction between $\rx$ and $\comodx$ induces a cofibrantly generated model structure on $\comodx$, we must check that every map built from $\J_{c}$ by pushouts and directed colimits is a weak equivalence.   Since all colimits in $\comodx$ are created in $\sset$, and the maps in $\J_c$ are underlying acyclic cofibrations in $\esset$, this follows.  We must also check that the domains of the generating sets $\I_c$ and $\J_c$ are small with respect to $\I_c$ and $\J_c$, respectively, which is clear since colimits in $\comodx$ are created in $\sset$.   

In the model category stucture right-induced from $(\rx)_{\op E}$, a map $f$ in $\comodx$ is defined to be a weak equivalence if $f \star X$ is a weak equivalence ($\op E_*$-equivalence) in $\rx$.  In other words, since $ - \star X$ preserves and reflects $\op E_*$-equiva\-len\-ces by Lemma~\ref{lem:star-pres-equiv},  the weak equivalences in $\comodx$ are the $\op E_*$-equivalences, as desired.

Finally, we show that the cofibrations are exactly the monomorphisms.  Since the maps in $\I_c$ are monomorphisms, it is clear than any $\I_c$-cofibration is a monomorphism.  To show the opposite inclusion, let $f: A \to B$ be a monomorphism in $\comodx$.  Using the model structure just established, factor $f$ as $i p$ with $i$ an $\I_c$-cofibration and $p$ an acyclic fibration.  Next, apply $- \star X$.  Since $f$ is a monomorphism, $f \star X$ is a monomorphism by Corollary \ref{cor:star-mono} and hence a cofibration in $\rx$.   
Since $-\star X$ preserves acyclic fibrations by definition, $p \star X$ is an acyclic fibration in $\rx$.
Thus, there exists a lift in the following square.
$$\xymatrix{A \star X \ar [d]_{f\star X}\ar [r]^(0.5)
{i\star X}&Z \star X \ar [d]^{p\star X}\\
B \star X \ar[r]^(0.5){id}\ar@{-->}[ur] &B\star X}$$
This shows that $f \star X$ is a retract of $i \star X$.  Applying $ - / X$, we see that $f$ is a retract of $i$ and hence an $I_c$-cofibration.  Thus, the $I_c$-cofibrations are exactly the monomorphisms.
\end{proof}

\subsection{Localization with respect to twisted homology}\label{sec.twisted}

We now consider connected spaces $X$ that are not necessarily simply connected, together with their universal covers $q:\tX \to X$.  If $X$ is simply connected, then $q$ is the identity, and everything in this section recovers material from the previous section.  We build new model category structures on $\rx$ and $\comodx$ by defining the weak equivalences and fibrations via the induced pullback functors $q^*$, as detailed in the theorem below. {We need these model structures in order to obtain a comodule model for $A(X)$ {(without the $\op E_*$-localization)} when $X$ is not simply connected (Corollary \ref{cor.connected}).}

\begin{thm}\label{thm:twisted}  Let $X$ be a simplicial set with universal cover $q:\tX \to X$.  
There are {left proper,} cofibrantly generated model category structures $(\rx )_{\Hq}$ and $(\comodx)_{\Hq}$ with respect to which the adjunction
$$\adjunct {(\rx)_{\Hq}}{(\comodx)_{\Hq}}{-/X}{-\star X}$$
is a Quillen equivalence and {such that weak equivalences and fibrations are induced from  via the adjunctions $q_{*}\dashv q^{*}$ from $(\cat {R}_{\tX})_{\HZ}$ and $(\cat {Comod}_{\tX_{+}})_{\HZ}$, respectively, {and the cofibrations are the underlying monomorphisms in both cases. Moreover,  $(\comodx)_{\Hq}$ is simplicial. This gives}  
 rise to Quillen adjunctions}
$$\adjunct{(\cat {R}_{\tX})_{\HZ}}{(\rx)_{\Hq}}{q_{*}}{q^{*}}\text{ and }\adjunct{(\cat {Comod}_{\tX_{+}})_{\HZ}}{(\comodx)_{\Hq}}{q_{*}}{q^{*}}.$$
\end{thm}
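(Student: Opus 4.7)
The plan is to build the model structures $(\rx)_{\Hq}$ and $(\comodx)_{\Hq}$ by right-inducing through $q^{*}$ from the already-constructed $\HZ$-local structures $(\cat R_{\tX})_{\HZ}$ and $(\cat{Comod}_{\tX_{+}})_{\HZ}$ of Theorem~\ref{thm:main-local}, which are available because the universal cover $\tX$ is simply connected. The Quillen adjunction $q_{*}\dashv q^{*}$ is then automatic, and the remaining assertions follow by comparison with the simply connected case.

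First I would apply the standard right-induction (transfer) theorem (as in \cite[11.3.2]{hirschhorn}) to the adjunctions $q_{*}\dashv q^{*}$ of Lemmas~\ref{lem:pushforward} and~\ref{lem.push.pull.comod}, taking as generating (acyclic) cofibrations the images under $q_{*}$ of the generating sets described in (and just before) Proposition~\ref{prop:cof.gen.comod}, applied over $\tX$. Smallness is clear because colimits in $\rx$ and $\comodx$ are computed in $\sset$; the non-formal condition — that every transfinite composition of pushouts of maps in $q_{*}J$ is an $\Hq$-equivalence — reduces to the analogous statement over $\tX$ via the fact that $q^{*}$ commutes with pushouts (Lemma~\ref{lem:pushforward} for retractive spaces, since $\sset$ has colimits stable under base change, and Corollary~\ref{cor:astar-pushout} for comodules) and with directed colimits.

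To identify the right-induced cofibrations with the underlying monomorphisms, I would first note that every $q_{*}I$-cell is a monomorphism, since monomorphisms are closed under pushouts, transfinite composition, and retracts. For the converse, I would adapt the final paragraph of the proof of Proposition~\ref{prop:cof.gen.comod}: given a monomorphism $f$, factor it using the right-induced structure as a cofibration followed by an acyclic fibration, apply $q^{*}$ to land in the $\HZ$-local world over $\tX$ where cofibrations are known to coincide with monomorphisms, and use the resulting lifting square together with Lemma~\ref{lem:slash-mono} and Corollary~\ref{cor:star-mono} to realize $f$ as a retract of the generating cofibration.

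For the Quillen adjunction $(-/X)\dashv(-\star X)$, the crucial input is the commuting square of Theorem~\ref{thm:adjunct} applied with $a = q$, giving $(-/X)\circ q_{*} = q_{*}\circ (-/\tX)$; passing to right adjoints yields $q^{*}\circ(-\star X) \cong (-\star \tX)\circ q^{*}$, so that preservation of (acyclic) fibrations by $-\star X$ over $X$ reduces to the corresponding preservation by $-\star \tX$ over $\tX$, established in Theorem~\ref{thm:main-local}. For the Quillen equivalence, I would verify directly from Definition~\ref{defn:slash} and Lemma~\ref{lem.push.pull.comod} the dual identity $q^{*}\circ(-/X)\cong(-/\tX)\circ q^{*}$ and transport the unit-counit argument of Remark~\ref{rmk:quillen} from $\tX$ down to $X$. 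The remaining properties follow formally: left properness transports along $q^{*}$ since it preserves pushouts and $\Hq$-equivalences; the simplicial enrichment of Lemma~\ref{lem:comodx-simplcat} satisfies the pushout-product axiom because it does over $\tX$ and $q^{*}$ is compatible with the tensoring over $\sset$; and $q_{*}\dashv q^{*}$ is Quillen by construction. I expect the main obstacle to be the careful identification of the right-induced cofibrations with monomorphisms, together with the direct verification of $q^{*}\circ(-/X)\cong(-/\tX)\circ q^{*}$ from the pullback-and-quotient definitions — both technical but essential for tying the $\Hq$-structures over $X$ to the $\HZ$-structures over the simply connected $\tX$.
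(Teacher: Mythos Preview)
Your overall strategy coincides with the paper's: both build the $\Hq$-structures by right-inducing along $q_{*}\dashv q^{*}$ via \cite[11.3.2]{hirschhorn}, and both deduce the Quillen equivalence from the identities $q_{*}(-/\tX)=(-/X)q_{*}$, $q^{*}(-\star X)\cong(-\star\tX)q^{*}$, and $q^{*}(-/X)\cong(-/\tX)q^{*}$. There are, however, two places where your sketch underestimates what has to be checked.

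\textbf{The acyclicity condition.} You write that the condition ``reduces to the analogous statement over $\tX$'' because $q^{*}$ preserves pushouts and filtered colimits. That reduction only shows that it suffices to prove each $q^{*}q_{*}j$ is an $\HZ$-acyclic cofibration in $\cat R_{\tX}$ (or $\cat{Comod}_{\tX_{+}}$) for $j$ a generating acyclic cofibration; it does \emph{not} show this last statement, which is the actual content. The paper computes $q^{*}q_{*}$ explicitly: a generator over $\tX$ has the form $j\coprod\id_{\tX}\colon A\coprod\tX\to B\coprod\tX$ together with an auxiliary map $g\colon B\to\tX$, and one finds $q^{*}q_{*}(j\coprod\id_{\tX})\cong (j\times_{X}\id_{\tX})\coprod\id_{\tX}$. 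The point is that because $g$ already factors through $\tX$ and $\tX\times_{X}\tX\cong\tX\times\pi_{1}X$, one gets $B\times_{X}\tX\cong B\times\pi_{1}X$ (and similarly for $A$), so $q^{*}q_{*}j$ is $j\times\pi_{1}X$ up to a disjoint copy of $\tX$, hence still an $\HZ$-equivalence. This computation is where the universal cover hypothesis is actually used and cannot be skipped.

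\textbf{Identifying the cofibrations with monomorphisms.} Your proposed retract argument does not transport from Proposition~\ref{prop:cof.gen.comod}. There the key was that the counit of $(-/X)\dashv(-\star X)$ is an isomorphism, so a lift constructed after applying $-\star X$ can be pushed back down. For $q_{*}\dashv q^{*}$ there is no such inverse: a lift of $q^{*}f$ against $q^{*}p$ in $\cat R_{\tX}$ does not yield a lift of $f$ against $p$ in $\rx$. The paper instead observes directly that the generating cofibrations $q_{*}\I_{\tX}$ coincide with the generating cofibrations $\I_{X}$ for $(\rx)_{\HZ}$ (and likewise for comodules): every auxiliary map $\Delta[n]\to X$ lifts to $\tX$ since $q$ is a fibration and $\Delta[n]$ is contractible, so the two sets of generators are literally equal. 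Hence the $\Hq$-cofibrations agree with the $\HZ$-cofibrations, which are already known to be the monomorphisms.
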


{\begin{rmk}\label{rmk:lift}  Let $X$ and $X'$ be connected simplicial sets with universal covers $q:\tX \to X$ and $q':\tX'\to X'$. Let $a:X'\to X$ be a simplicial map, lifting to a map $\tilde a : \tX'\to \tX$ such that $q\tilde a= aq'$.  If $X$ and $X'$ are nice enough, then their universal covers can be constructed functorially, implying the existence of such a lift, which also exists if $\tX'$ is contractible. If $X$ and $X'$ were topological spaces rather than simplicial sets, then such a lift would always exist, at least if $\tX'$  had been chosen to be path-connected and locally path-connected.

The existence of the lift $\tilde a$ implies that the adjunction
$$\adjunct{(\comod_{X'_{+}})_{\Hq}}{(\comodx)_{\Hq}}{a_{*}}{a^{*}}$$
is in fact a Quillen pair.    If $p$ is a fibration in $(\comodx)_{\Hq}$, then $q^{*}(p)$ is a fibration in $(\comod_{\tX_{+}})_{\HZ}$, by definition. Since $\tilde a^{*}: (\comod_{\tX_{+}})_{\HZ} \to (\comod_{\tX'_{+}})_{\HZ}$ is right Quillen (Theorem \ref{thm:main}), $q'^{*}a^{*}(p)$, which is equal to $\tilde a^{*}q^{*}(p)$, is a fibration in $(\comod_{\tX'_{+}})_{\HZ}$.  By definition, therefore, $a^{*}(p)$ is a fibration in $(\comod_{X'_{+}})_{\Hq}$.  Identical reasoning implies that $a^{*}$ preserves acyclic fibrations as well  and is thus a right Quillen functor.
\end{rmk}}

\begin{proof}
We first apply the usual lifting for cofibrantly generated model categories \cite[Theorem 11.3.2]{hirschhorn} to show that the desired model structures $(\rx)_{\Hq}$ and $(\comodx)_{\Hq}$ exist.   The potential generating acyclic cofibrations are therefore $q_*\J_{\tX, \HZ}$ and $q_* \J_c$ in $\rx$ and $\comodx$ respectively, where we are using the notation established in {the beginning of Section \ref{subsec:right} and the proof of Proposition \ref{prop:cof.gen.comod}.}  We must first check that the maps in these classes are $\Hq$-equivalences, i.e., that
$q^* q_*\J_{\tX, \HZ} \subseteq \cat{WE}_{\HZ}$ and $q^* q_* \J_c \subseteq \cat{WE}_{\HZ}$. We must then show that the same holds for pushouts of the generating maps as well, for which the fact that $q^*$ commutes with {pushouts} {(Lemma~\ref{lem:pushforward})} is very useful.

We start with $\rx$. Any map in $\J_{\tX, \HZ}$ consists of a simplicial map 
$$j'=j \coprod \id_{\tX}: A\coprod \tX \to B\coprod \tX,$$ 
where $j: A \to B \in \J_{\HZ}$, together with an auxiliary map $g: B\to \tX$.   Applying Lemma~\ref{lem:pushforward}, we calculate that $q^* q_*  j'$ consists of the map 
$$j''=(j\times _{X}\id_{\tX})\coprod \id_{\tX}:(A \times_X \tX )\coprod \tX \to (B \times_X \tX) \coprod \tX,$$  
together with the obvious auxiliary map $B\times_{X} \tX \to \tX$. To show that $j''$ is an $\HZ$-equivalence, {note that since we began with a map $g: B \to \tX$,  and $\tX$ is a twisted cartesian product of $X$ and $\pi_{1}X$,
$$B\times_X \tX \cong B \times_{\tX} (\tX \times_X \tX)\cong B\times_{\tX}(\tX\times \pi_{1}X)\cong B\times \pi_1 X.$$
Similarly, 
$$A\times_X \tX \cong A\times_{\tX}(\tX\times \pi_{1}X) \cong A \times \pi_1 X.$$  
The $\HZ$-equivalence $j: A \to B$ induces a homology isomorphism on these products and hence $q^* q_* j' = j''$ is an $\HZ$-equivalence as required. } To see that pushouts of maps of the form $q_* j'$ are also $\Hq$-equivalences, observe that  $q_* j'$ is a $\Hq$-equivalence and a monomorphism and recall that $q^*$ preserves pushouts (Lemma~\ref{lem.push.pull.comod}).

The arguments for $\comodx$ are similar.  We begin with a map $j_{+}: A_+ \to B_+$ in $\J_c$ for $\comodtx$, where $j: A \to B \in \J_{\HZ}$, and an auxiliary map $g: B \to \tX$ inducing the $\tX_{+}$-coaction.  Lemma~\ref{lem.push.pull.comod} implies that {$q^* q_*  j$} is the map 
$$j'_{+}=(j\times _{X}\tX)_{+}:(A \times_X \tX)_+ \to (B \times_X \tX)_+, $$ 
together with the obvious auxiliary map $B \times_X \tX\to \tX$.  Exactly the same argument as in the case of $\rx$ implies that $j'_{+}$ is an $\HZ$-equivalence, as desired.  Moreover, since $q^*$ preserves pushouts (Corollary \ref{cor:astar-pushout}), pushouts of $q_* j$ are also $\Hq$-equivalences.

To prove that the cofibrations in $(\rx)_{\Hq}$ and $(\comodx)_{\Hq}$ are exactly the monomorphisms, note that the sets of  generating cofibrations in both cases are identical to those in $(\rx)_{\HZ}$ and $(\comodx)_{\HZ}$. Hence the cofibrations agree as well and are thus exactly the monomorphisms. The only subtlety in showing that the sets of generating cofibrations are exactly the same lies in verifying that the auxiliary maps are the same.  This amounts to checking that every auxiliary map $\Delta[n] \to X$ factors through $\tX$, which holds since $\tX \to X$ is a fibration, and $\Delta[n]$ is contractible.     

Left properness of $(\rx)_{\Hq}$ and $(\comodx)_{\Hq}$ follows from that of  $(\rx)_{\HZ}$ and $(\comodx)_{\HZ}$, as  $q^*$ commutes with pushouts and preserves monomorphisms.  The simplicial model category structure on $(\comodx)_{\Hq}$ is induced by that on $(\comodx)_{\HZ}$ because cofibrations, weak equivalences, and the smash product are determined on the underlying space, and $q_*: \comodtx \to \comodx$ is the identity on the underlying space.

Now that the model structures $(\rx)_{\Hq}$ and $(\comodx)_{\Hq}$ have been established, we show that $(-/X, -\star X)$ is a Quillen equivalence.  We need to consider the relations among several of the adjoint pairs in play here.  For $(Z, i, r)$ in $\rtx$, one can check directly using Lemmas~\ref{lem.push.pull.comod} and~\ref{lem:pushforward} that $q_*(Z) / X = q_*(Z/X)$.  It follows that the associated right adjoints also commute, so for $(Y, \rho)$ in $\comodx$, $(q^*Y) \star \tX = q^*(Y \star X).$   For $(Z, i, r)$ in $\rx$, one can also check directly that $q^*(Z/X) = (q^*Z)/\tX$; this may seem surprising, but remember that $q^*$ is both a left and right adjoint here.

Given  a cofibrant object $(Z,i,r)$ in $\rx$,  a fibrant object $(Y,\rho)$ in $\comodx$, and a map $j: Z /X \to Y$ with adjoint $j^{\sharp}: Z \to Y \star X$, we show that $j$ is an $\Hq$-equivalence if and only if $j^{\sharp}$ is.  The map $j$ is an $\Hq$-equivalence if and only if $q^*j: q^*(Z/X) \to q^*Y$ is an $\HZ$-equivalence.  Since $q^*(Z/X) = (q^*Z) /\tX$, and $(-/\tX, -\star \tX)$ is a Quillen equivalence, $q^*j$ is an $\HZ$-equivalence if and only if $(q^{*}j)^{\sharp}: q^*Z \to (q^*Y) \star \tX$ is an $\HZ$-equivalence.   Here we use that $q^{*}: \rxÊ\to \cat R_{\tX}$ necessarily preserves cofibrant objects, as Remark \ref{rmk:cofwe-in-rx} implies that all objects in $\cat R_{\tX}$ are cofibrant. On the other hand, $(q^{*}j)^{\sharp}=q^{*}(j^{\sharp})$ because $(q^*Y) \star \tX = q^*(Y \star X)$, whence{ $j$ is an $\Hq$-equivalence if and only if $j^{\sharp}$ is an $\Hq$-equivalence.  }
\end{proof}

The following lemma follows from the proof of Proposition~\ref{prop:A-Hq} and is used to understand the stabilization of $(\comodx)_{\Hq}$ in the next section.

{
\begin{lem}\label{lem.Hq.HZ}  Let $q:\tX \to X$ be a universal cover, where $X$ is a connected simplicial set.
If $f:(Y,\rho) \to (Y', \rho')$ is an $\Hq$-equivalence in $\comodx$, then $f:Y \to Y'$ is an $\HZ$-equivalence of pointed simplicial sets.
\end{lem}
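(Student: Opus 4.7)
The plan is to transport the statement from $\comodx$ to $\rx$ via the right adjoint $-\star X$, and then use that $q:\tX \to X$ is a principal $\pi_1 X$-bundle (the universal cover) to invoke a Cartan-Leray argument.

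By Lemma~\ref{lem:star-pres-equiv}, the functor $-\star X$ preserves and reflects $\HZ$-equivalences, so $f:Y\to Y'$ is an $\HZ$-equivalence of pointed simplicial sets if and only if $f\star X: Y\star X \to Y'\star X$ is an $\HZ$-equivalence of underlying simplicial sets. It therefore suffices to show the latter. Combining the hypothesis that $q^*f$ is an $\HZ$-equivalence on underlying pointed simplicial sets with the identity $q^*(Y\star X)=(q^*Y)\star\tX$ established in the proof of Theorem~\ref{thm:twisted}, and applying Lemma~\ref{lem:star-pres-equiv} again, this time to $\tX$, we conclude that $q^*(f\star X) = (q^*f)\star\tX$ is an $\HZ$-equivalence of underlying simplicial sets.

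Now observe that the projection
$$q^*(Y\star X) \;=\; (Y\star X) \times_{X} \tX \;\longrightarrow\; Y\star X,$$
obtained by pulling back $q$ along the retraction $r_\rho: Y\star X \to X$, is a principal $\pi_1 X$-bundle: the deck action on the $\tX$-coordinate is free, commutes with the projection, and has quotient $Y\star X$; the same holds for $Y'$, and $q^*(f\star X)$ is $\pi_1 X$-equivariant with respect to these actions. The Cartan-Leray spectral sequence for this covering,
$$E^2_{p,q} \;=\; H_p\bigl(\pi_1 X;\, H_q(q^*(Y\star X))\bigr) \;\Longrightarrow\; H_{p+q}(Y\star X),$$
is natural in $\pi_1 X$-equivariant maps of covers. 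Since $q^*(f\star X)$ is an $\HZ$-equivalence, it induces a $\pi_1 X$-equivariant isomorphism on $H_q$ for every $q$, hence an isomorphism on $E^2$, and therefore an isomorphism on the abutments $H_*(Y\star X) \to H_*(Y'\star X)$. Thus $f\star X$, and so $f$, is an $\HZ$-equivalence.

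The main point to check is that the pullback $(Y\star X)\times_X\tX \to Y\star X$ is genuinely a principal $\pi_1 X$-bundle; this is immediate since principal $G$-bundles are stable under pullback and $q$ is such a bundle by construction of the universal cover. The Cartan-Leray comparison is then routine. (Alternatively, one can bypass the spectral sequence by observing that the free $\pi_1 X$-action makes $C_*(q^*(Y\star X))$ a complex of free $\mathbb Z[\pi_1 X]$-modules with coinvariants equal to $C_*(Y\star X)$, so that the equivariant quasi-isomorphism descends directly.)
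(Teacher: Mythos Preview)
Your proof is correct and takes a genuinely different route from the paper's.

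The paper first reduces to the case where $f$ is a cofibration, passes to the cofiber, and argues that an $\Hq$-acyclic comodule is $\HZ$-acyclic by reusing the suspension argument from Proposition~\ref{prop:A-Hq}: one transports to $\rx$ via $-\star X$, observes that $\tX \to q^*(Y\star X)$ is an $\HZ$-isomorphism, deduces that $\Sigma_X(Y\star X)\to X$ is a weak equivalence because $\tX$ is simply connected, and concludes that $Y$ is $\HZ$-acyclic. Ken Brown's Lemma then extends the conclusion from acyclic cofibrations to all $\Hq$-equivalences.

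Your approach bypasses both the reduction to cofibrations and the suspension/Whitehead step. Once you have transported to $\rx$ and recognised $q^*(Y\star X)\to Y\star X$ as the pullback of the universal cover---hence a regular $\pi_1X$-cover---the Cartan--Leray spectral sequence (or equivalently the observation that an equivariant quasi-isomorphism between bounded-below complexes of free $\mathbb Z[\pi_1X]$-modules is a chain homotopy equivalence, hence survives $-\otimes_{\mathbb Z[\pi_1X]}\mathbb Z$) gives the result directly for an arbitrary $f$. This is more self-contained and more elementary: it does not appeal to Proposition~\ref{prop:A-Hq} or to Ken Brown, and it handles non-monomorphisms without any extra work. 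The paper's route has the virtue of recycling an argument already in place, keeping the narrative tight, but as a standalone proof yours is cleaner.
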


\begin{proof}{Assume first that $f: (Y, \rho) \to (Y', \rho')$ is a cofibration, i.e., a levelwise monomorphism on the underlying simplicial sets.}  Since $q^*: \comodx \to \comod_{\tX_{+}}$ preserves cofiber sequences (Corollary \ref{cor:astar-pushout}), a cofibration of comodules $f:(Y,\rho) \to (Y', \rho')$ is an $\Hq$-equivalence (respectively, an $\HZ_*$-equivalence) if and only if its cofiber $Y'/Y$ is such that  $\widetilde H_{*}\big( q^{*}(Y'/Y)\big) =0$ (respectively, $\widetilde H_{*}(Y'/Y) =0$), where $\widetilde H_{*}$ denotes reduced homology.  To complete the proof, it suffices therefore to show for that if $Y$ is a pointed, connected simplicial set  such that $\widetilde H_{*}(q^{*}Y)=0$, then $\widetilde H_{*}(Y)=0$.  

Suppose that $\widetilde H_{*}(q^{*} Y)=0$.  Since 
$$q^*(Y\star X) / \tX \iso (q^* Y) \star \tX / \tX \iso q^* Y,$$ 
it follows that $\tX \to q^*(Y \star X)$ is an $\HZ_*$ isomorphism.  The argument in the proof of Proposition~\ref{prop:A-Hq} implies then that $\Sigma_X(Y \star X) \to X$ is a  weak equivalence.   Consequently, $Y \star X \to X$ is an $\HZ_*$ isomorphism and hence $Y$ is $\HZ_*$-acyclic.    

{It follows that the forgetful functor $U: (\comodx)_{\Hq}\to (\sset)_{\HZ}$ sends acyclic cofibrations to weak equivalences.  Since all objects in $(\comodx)_{\Hq}$ are cofibrant, Ken Brown's Lemma implies that $U$ sends every $\Hq$-equivalence to an $\HZ$-equivalence.}
\end{proof}
}

{
\begin{rmk}\label{rmk:counter-ex} The inclusion, established above, of the class of $\Hq$-equivalences into the class of $\HZ$-equivalences is strict.  The key to showing this is the following observation of Hausmann and Husemoller \cite{hausmann-husemoller}.  Let $q:\tX\to X$ be a universal cover of a connected simplicial set.  It follows from \cite[(1.2)]{hausmann-husemoller} that a map of connected simplicial sets $f:Z \to X$ is an $\Hq$-equivalence if and only if $\widetilde H_{*}F=0$, where $F$ is the homotopy fiber of $f$.  Thus any $\HZ$-equivalence $Z\to X$ such that $\widetilde H_{*}F\not=0$ cannot be an $\Hq$-equivalence.

An explicit example of a map of retractive spaces that is an $\HZ$-equivalence but not an $\Hq$-equivalence can be constructed as follows. Let $n\geq 2$.  In Example 4.35 in \cite{hatcher}, Hatcher builds a space
$$Z=(S^{1}\vee S^{n})\cup_{\alpha}e^{n+1}$$
for an appropriate choice of $\alpha: S^{n}\to S^{1}\vee S^{n}$ such that the inclusion $S^{1}\to Z$ is an $\HZ$-equivalence but $\pi_{n}Z \not =0$.  Applying \cite[Lemma 4.7]{hatcher}, we can show that the map $S^{1}\vee S^{n} \to S^{1}$ that is the identity on the first summand and collapses the second summand to a point can be extended to a map $r:Z \to S^{1}$, i.e., $(Z,i,r)$ is an object in $\cat R_{S^{1}}$.  

Let $F$ denote the homotopy fiber of $r: Z \to S^{1}$.  Examining the long exact sequence in homotopy of the fiber sequence $F\to Z \to S^{1}$, we see that $F$ is simply connected and $\pi_{n}F\not= 0$, whence $\widetilde H_{*}F\not =0$.  It follows that $r: (Z,i,r) \to (S^{1}, \id, \id)$ is a morphism of retractive spaces over $S^{1}$ that is not an $\Hq$-equivalence, even though it is an $\HZ$-equivalence.

If all $\HZ$-equivalences of $X_{+}$-comodules were $\Hq$-equivalences, then the model category structures $(\comodx)_{\HZ}$ and $(\comodx)_{\Hq}$ would be identical, as we already know that the cofibrations agree in both structures.  We would therefore have a pair of Quillen equivalences
$$\xymatrix{(\rx)_{\HZ}\ar @<1.18ex>[rr]^{-/X}&\perp&(\comodx)_{\HZ}\ar @<1.18ex>[ll]^{-\star X}\ar @<-1.18ex>[rr]_{-\star X}&\perp&(\rx)_{\Hq}\ar @<-1.18ex>[ll]_{-/X},}$$
and thus, by two-out-of-three, the Quillen pair 
$$\adjunct{(\rx)_{\Hq}}{(\rx)_{\HZ}} {\id}{\id}$$
would also be a Quillen equivalence.   Consequently, {since all retractive spaces are cofibrant,} any $\HZ$-equivalence from a retractive space to a retractive space that is fibrant in the $\HZ$-model category structure would have to be an $\Hq$-equivalence.   Since $(S^{1},\id, \id)$ is fibrant in $(\cat R_{S^{1}})_{\HZ}$, as it is the terminal object, the morphism $r: (Z, i,r) \to (S^{1}, \id, \id)$ constructed above contradicts this conclusion.  It follows that not all $\HZ$-equivalences of $X_{+}$-comodules are $\Hq$-equivalences.
\end{rmk}

}

\subsection{Koszul duality}\label{sec:koszul}

For any reduced simplicial set $X$, we use $\G X$, {the simplicial monoid given by the Kan loop group, to model $\Omega X$}. Let $\modgx$ denote the category of pointed $\G X$-spaces, i.e., of pointed simplicial sets endowed with an action of $\G X$ that fixes the basepoint.   

Let $\gsset$ denote the category of \emph{unpointed} simplicial sets endowed with a simplicial $\G X$-action. Thanks to the cofibrant generation of $(\usset)_{\mathrm{Kan}}$ and of $\ksset$, as well  of $(\usset)_{\op E}$ and $(\sset)_{\op E}$ for any generalized reduced homology theory $\op E_{*}$, it is easy to obtain model category structures $(\gsset)_{\mathrm{Kan}}$, $(\modgx)_{\mathrm{Kan}}$, $(\gsset)_{\op E}$, and $(\modgx)_{\op E}$  that are right-induced by the adjunctions
$$\adjunct{\usset}{\gsset}{-\times \G X}{U}\quad\text{ and }\quad\adjunct{\sset}{\modgx}{-\wedge (\G X)_{+}}{U},$$
i.e., the fibrations and weak equivalences in $\gsset$ and $\modgx$ are created in $\usset$ and $\sset$, respectively.

In this section we exhibit the Koszul duality between pointed $\G X$-spaces and $X_{+}$-comodules via a Quillen equivalence between the respective model categories.

If $X$ is a reduced simplicial set,  let $\P X$ denote the twisted cartesian product $X \times _{\tau} \G X$, where $\tau: X \to \G X$ is the universal twisting function \cite{may}.   {Note that $\P X$ is a contractible, free $\G X$-space and the quotient by $\G X$ gives a map $p:\P X \to \P X\otimes _{\G X}\{e\}=X$.  It follows that $\P X$ is a particularly nice model for the total space $E \G X$ of the classifying bundle of $\G X$, since in general there is only a weak equivalence $E \G X / \G X = B \G X \simeq X$.} 

\begin{thm}\label{thm:koszul}  If $X$ is a reduced simplicial set {and $\op E$ is any generalized homology theory}, then there is a {Quillen}  equivalence
$$\adjunct{(\modgx)_{\op E}}{(\comodx)_{\op E}.}{-\wedge_{(\G X)_{+}}(\P X)_{+}}{}$$
\end{thm}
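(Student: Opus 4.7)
The plan is to construct the adjunction explicitly, with left adjoint $L := -\wedge_{(\G X)_+}(\P X)_+$, then verify the Quillen pair axioms with respect to the $\op E_*$-local model structures established earlier, and finally deduce the Quillen equivalence from the contractibility of $\P X$ together with the freeness of the $\G X$-action on it.

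First I would check that $L$ actually takes values in $\comodx$. The map $(\P X)_+ \to (\P X)_+ \wedge X_+$ given by $(x,g) \mapsto \bigl((x,g), p(x,g)\bigr)$, where $p:\P X\to X$ is the projection, is $\G X$-equivariant (the right $\G X$-action is trivial on the smashed $X_+$-factor), coassociative, and counital. It therefore descends along the coequalizer defining $M \wedge_{(\G X)_+}(\P X)_+$ to endow $L(M)$ with a natural, coassociative, counital $X_+$-comodule structure, functorially in $M$. The existence of a right adjoint $R:\comodx \to \modgx$ then follows from the adjoint functor theorem, since $L$ preserves colimits (both smash products and coequalizers are left-adjoint in each variable) and both $\modgx$ and $\comodx$ are locally presentable (Lemma \ref{lem:comodx-bicomplete}).

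Next I would verify that $(L,R)$ is a Quillen pair. Cofibrations in $(\modgx)_{\op E}$ and $(\comodx)_{\op E}$ are exactly the underlying monomorphisms of pointed simplicial sets, and $L$ preserves monomorphisms because $(\P X)_+$ is a free right $(\G X)_+$-module, since $\P X = X\times_\tau \G X$ is a principal $\G X$-bundle. Likewise, $L$ preserves $\op E_*$-equivalences: freeness of the $\G X$-action on $\P X$ means that $-\wedge_{(\G X)_+}(\P X)_+$ computes the homotopy orbit construction on underlying pointed simplicial sets, which preserves $\op E_*$-equivalences (all objects are cofibrant here). Hence $L$ is a left Quillen functor.

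Finally, to establish Quillen equivalence, I would show that for every $M \in \modgx$ the derived unit $M \to R(LM)^{\mathrm{fib}}$ is an $\op E_*$-equivalence on underlying pointed simplicial sets; since all objects of $(\modgx)_{\op E}$ are cofibrant, this suffices. The critical case is a free $\G X$-module $M = N \wedge (\G X)_+$: here $L(M) \cong N \wedge (\P X)_+$, equipped with the cofree-type $X_+$-comodule structure pulled back from $p:\P X \to X$, and the unit becomes an $\op E_*$-equivalence essentially because $\P X$ is contractible, so that smashing with $(\P X)_+$ is an $\op E_*$-equivalence on underlying simplicial sets. One then extends to arbitrary $M$ by resolving $M$ via the bar construction by free $\G X$-modules and using that $L$, together with a functorial fibrant replacement in $(\comodx)_{\op E}$, preserves the relevant homotopy colimits. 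The main obstacle is controlling $R$ and its fibrant replacements concretely enough to carry out the extension from the free case to the general case; a natural alternative is to recognize the statement as the model-categorical shadow of the classical Koszul duality arising from the path-loop principal fibration $\G X \to \P X \to X$, and to import an existing comparison theorem for principal $\G X$-bundles with contractible total space.
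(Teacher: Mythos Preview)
Your proposal contains a factual error that undermines the Quillen-pair verification, and the acknowledged gap in the Quillen-equivalence step is genuine.

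The error: you assert that cofibrations in $(\modgx)_{\op E}$ are exactly the underlying monomorphisms and that all objects are cofibrant. Neither is true. The model structure on $\modgx$ is \emph{right}-induced from $\esset$ via the free/forgetful adjunction (see the beginning of \S\ref{sec:koszul}), so fibrations and weak equivalences are created in $\sset$, while cofibrations are retracts of relative $(\G X)_{+}$-cell complexes. In particular, a $\G X$-module such as $S^{0}$ with trivial action is not cofibrant. Consequently your argument that $L$ is left Quillen---by preserving monomorphisms and by preserving all $\op E_{*}$-equivalences ``since all objects are cofibrant''---does not go through. One can still check $L$ is left Quillen by evaluating on the generating (acyclic) cofibrations $(\G X)_{+}\wedge \partial\Delta[n]_{+}\to (\G X)_{+}\wedge \Delta[n]_{+}$, but this is not what you wrote, and it does not yield preservation of $\op E_{*}$-equivalences on \emph{all} objects, which your derived-unit argument later implicitly uses.

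The gap you flag---controlling $R$ and its fibrant replacements well enough to verify the derived unit is an equivalence---is exactly the obstacle the paper's proof is designed to avoid. Rather than analyzing $R$ directly, the paper factors the adjunction as
\[
\modgx \;\xrightleftharpoons[\;\map(\P X,-)\;]{\;\operatorname{Ret}_{\P X}\;}\; \cat R_{\P X}^{\G X}\;\xrightleftharpoons[\;p^{*}\vp^{*}\;]{\;-\otimes_{\G X}\{e\}\;}\;\rx \;\xrightleftharpoons[\;-\star X\;]{\;-/X\;}\;\comodx,
\]
and handles each piece separately: the right-hand adjunction is the Quillen equivalence of Theorem~\ref{thm:main-local}; the middle adjunction is an honest \emph{equivalence of categories} because every retractive $\G X$-space over the free $\G X$-space $\P X$ is itself $\G X$-free; and the left-hand adjunction is a Quillen equivalence because $\P X$ is contractible, so $-\times\P X$ is a Quillen equivalence on $\esset$ and lifts equivariantly. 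A short colimit-interchange then identifies the composite left adjoint with $-\wedge_{(\G X)_{+}}(\P X)_{+}$. This factorization is precisely the ``existing comparison theorem for principal $\G X$-bundles with contractible total space'' you gesture toward at the end, and it is the missing ingredient in your direct approach.
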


\begin{proof}   
{ The quotient map} $p:\P X \to \P X\otimes _{\G X}\{e\}=X$ gives rise to an $X_{+}$-comodule structure on $(\P X)_{+}$. Let $\cat R_{\P X}^{\G X}=\cat R_{\P X}(\gsset)$, the category of retractive $\G X$-spaces over $\P X$.  

The desired Quillen equivalence arises from the sequence of adjunctions
$$\xymatrix{\modgx\ar @<1.18ex>[rr]^{\operatorname{Ret}_{\P X}}&\perp&\cat R_{\P X}^{\G X}\ar @<1.18ex>[ll]^{\map (\P X, -)}\ar @<1.18ex>[rr]^{-\otimes _{\G X}\{e\}}&\perp&\rx \ar @<1.18ex>[ll]^{p^{*}\vp^{*}}\ar @<1.18ex>[rr]^{-/X}&\perp&\comodx,\ar @<1.18ex>[ll]^{-\star X}}$$
where
\begin{itemize}
\item $\operatorname{Ret}_{\P X}$ is defined as in Remark \ref{rmk:adjunct2}, where $Y\times \P X$ is endowed with the diagonal $\G X$ action, for any $\G X$-space $Y$;
\smallskip

\item for any object $(Z,i,r)$ of $\cat R_{\P X}^{\G X}$, the basepoint of $\map (\P X, Z)$ is the map $i:\P X\to Z$, and the $\G X$-action on $\map (\P X, Z)$ is the diagonal action;
\smallskip

\item the functor $-\otimes _{\G X}\{e\}$ takes $\G X$-orbits, and for any object $(Z,i,r)$ of $\cat R_{\P X}^{\G X}$, 
$$(Z,i,r)\otimes _{\G X}\{e\}= \big (Z\otimes _{\G X}\{e\}, i\otimes _{\G X}\{e\}, r\otimes _{\G X}\{e\}\big);$$
and
\smallskip

\item for any object $(Z,i,r)$ of $\cat R_{X}$, the functor $p^{*}\vp^{*}$ first endows $X$ and $Z$ with a trivial $\G X$ action, via restriction of coefficients along $\vp: \G X \to \{ e \}$, then applies pullback along $p$, i.e.,  the object underlying $p^{*}\vp^{*}(Z,i,r)$ is the pullback of
$$\P X \xrightarrow {p} \vp^{*}X \xleftarrow r  \vp^{*}Z$$
in $\gsset$. 
\end{itemize}
 
We have already shown that $(-/X)\dashv (-\star X)$ is a Quillen equivalence with respect to the $\op E_{*}$-local structures constructed in the proof of Theorem \ref{thm:main}.  Next observe that for every object $(Z,i,r)$ in $\cat R_{\P X}^{\G X}$, $Z$ is a free $\G X$-space, since $\P X$ is a free $\G X$-space. 
{It follows that $(-\otimes _{\G X}\{e\})\dashv p^{*}\vp^{*}$ is actually an equivalence of categories. 
} Finally, $(\operatorname{Ret}_{\P X})\dashv \map(\P X,-)$ is also a Quillen equivalence with respect to the $\op E_{*}$-local structures, as it lifts the adjunction 
$$\adjunct{(\sset)_{\op E}}{(\cat {R}_{\P X})_{\op E},}{\operatorname{Ret}_{\P X}}{\map(\P X, -)}$$
which is easily seen to be a Quillen equivalence, since 
$$\adjunct{(\sset)_{\op E}}{(\sset)_{\op E},}{-\times \P X}{\map(\P X, -)}$$
is a Quillen equivalence.  {See also \cite[\S 7.2]{schwede-stalgthy} for another version of the last two steps here.}

We now show that for all pointed $\G X$-spaces $Y$, there is a natural isomorphism
$$ \big( \operatorname{Ret}_{\P X}(Y)\otimes _{\G X}\{e\}\big)/X \cong Y \wedge_{(\G X)_{+}} (\P X)_{+}.$$
Consider the following commuting diagram of parallel pairs of morphisms
$$\xymatrix{\{*\}\times \G X\times \P X \ar @<1ex>[r]\ar @<-1ex>[r]\ar [d]_{\iota_{y_{0}}} & \{*\}\times \P X\ar [d]^{\iota _{y_{0}}}\\
Y\times \G X\times \P X \ar @<1ex>[r]\ar @<-1ex>[r] & Y\times \P X,
}$$
where $\iota_{y_{0}}$ denotes the inclusion determined by the basepoint $y_{0}$ of $Y$,  and the parallel arrows are defined in terms of the right action of $\G X$ on $Y$ and of its  left action on $\P X$, given by inverting and then multiplying on the right. Taking colimits horizontally and then vertically, we obtain $ \big( \operatorname{Ret}_{\P X}(Y)\otimes _{\G X}\{e\}\big)/X$, while taking colimits vertically then horizontally gives rise to $Y \wedge_{(\G X)_{+}} (\P X)_{+}$.
\end{proof}

\section{Model category structures on $\comodsx$}\label{sec:stabilization}

We now apply the stabilization machinery of \cite{hovey-spectra} to obtain a spectral version of the results in the previous section. {See Section~\ref{sec:machine} below for a detailed description of this process.}

\begin{notn} Let $\spec$ denote the category of symmetric spectra, endowed with the stable model structure \cite{hss}. For any simplicial set $X$, let $\sx$ denote the suspension spectrum of $X_{+}$ and $\comodsx$  the category of $\sx$-comodules in $\spec$ with respect to the smash product of symmetric spectra. {See Section~\ref{sec:stab-comod} below for more details.} {Let $\pist$ denote the generalized homology theory on spaces given by stable homotopy groups.} 
\end{notn}

We refer the reader to Appendix \ref{appendix} for the definitions of left-induced model category structures, as well as of monoidal model categories and the monoid axiom, and for results describing the relations among these notions.

{\begin{thm}\label{thm:stable}  
Let $X$ be a simplicial set and $\op E_{*}$ any generalized homology theory.
\begin{enumerate}
\item
There are combinatorial, left proper, spectral model category structures $\spec_{\op E}$, $(\comodsx)_{\op E}^{\mathrm{st}}$,  and $(\comodsx)_{\op E}^{\mathrm{left}}$, where the first two are stabilized from $\esset$ and $(\comodx)_{\op E}$, and the third is left-induced from the first.  In particular, the functors
$$\xymatrix{(\comodsx)_{\op E}^{\mathrm{st}}\ar [rr]^{\id}&&(\comodsx)_{\op E}^{\mathrm{left}}\ar [rr]^(0.7){U}&&\espec}$$
are left Quillen, and  weak equivalences and fibrations in $(\comodsx)_{\op E}^{\mathrm{left}}$ are created by $U$.  
\item 
The identity functors
$$\id: (\comodsx)_{\pist}^{\mathrm{st}}\to (\comodsx)_{\op E}^{\mathrm{st}} \quad\text{and}\quad (\comodsx)_{\pist}^{\mathrm{left}}\to (\comodsx)_{\op E}^{\mathrm{left}}$$ 
are left Quilllen functors.
\item {Moreover,  
$$\spec_{\HZ}=\spec_{\pist}=\spec,$$ 
and
$$(\comodsx)_{\HZ}^{\mathrm{st}}=(\comodsx)_{\pist}^{\mathrm{st}}\quad \text{ and }\quad  (\comodsx)_{\HZ}^{\mathrm{left}}=(\comodsx)_{\pist}^{\mathrm{left}}.$$ These statements hold for $\HZ$ replaced by  any generalized homology theory $\op E$ such that every $\op E_*$ equivalence is a $\pist$ equivalence. See also Proposition~\ref{prop:local.spec}.
}
\item
If $X$ is a simplicial {(commutative)} monoid, then $(\comodsx)_{\op E}^{\mathrm{left}}$ {is a {(symmetric)} monoidal model category  satisfying the monoid axiom.}
\end{enumerate}
\end{thm}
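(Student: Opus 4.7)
The plan is to apply Proposition~\ref{prop:monoid-axiom} of the appendix, which gives criteria under which a left-induced model structure inherits a (symmetric) monoidal model category structure together with the monoid axiom from the base of the induction via a strong monoidal left adjoint. In our setting, the relevant left adjoint is the forgetful functor $U : \comodsx \to \espec$ exhibiting $(\comodsx)_{\op E}^{\mathrm{left}}$ as left-induced from $\espec$, as given by part~(1) of the theorem.

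First I would equip $\comodsx$ with its monoidal structure $\otimes$, lifting the construction of Lemma~\ref{lem:comodx-monoid} to the category of symmetric spectra. When $(X, \mu, x_{0})$ is a simplicial (commutative) monoid, $\sx$ is a (commutative) ring spectrum, and for $(Y, \rho), (Y', \rho') \in \comodsx$, the coaction on $Y \wedge Y'$ is the composite
$$Y \wedge Y' \xrightarrow{\rho \wedge \rho'} (Y \wedge \sx) \wedge (Y' \wedge \sx) \cong Y \wedge Y' \wedge \sx \wedge \sx \xrightarrow{Y \wedge Y' \wedge \mu_{\sx}} Y \wedge Y' \wedge \sx.$$
The unit is the sphere spectrum $S$ with coaction given by the unit map $S \to \sx$ of the ring spectrum. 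Associativity, unitality, and (when $X$ is commutative) symmetry are inherited from the corresponding properties of $(\spec, \wedge, S)$ and of $\mu_{\sx}$. By construction, $U : \comodsx \to \spec$ is strong monoidal.

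Next I would verify the hypothesis that $(\espec, \wedge, S)$ is itself a monoidal model category satisfying the monoid axiom. For the non-localized stable model structure $\spec$, this is classical \cite{hss}; for $\espec$, the cofibrations coincide with those of $\spec$, so the pushout-product axiom for cofibrations transfers directly, and the compatibility with $\op E$-local acyclic cofibrations mirrors the unstable argument in the proof of Theorem~\ref{thm:comodx}, where monomorphism pushout-products of $\op E_{*}$-equivalences are seen to remain $\op E_{*}$-equivalent via a cofiber calculation. With this established, Proposition~\ref{prop:monoid-axiom} applies directly to yield the (symmetric) monoidal model category structure on $(\comodsx)_{\op E}^{\mathrm{left}}$ together with the monoid axiom.

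The main obstacle I anticipate is the verification of the monoid axiom for $\espec$. Granted this, the reduction is essentially formal: strong monoidality of $U$, combined with the fact that $U$ preserves colimits (being a left adjoint) and both creates and reflects cofibrations and weak equivalences (as a left-inducing functor), lets us transfer every pushout-product and monoid-axiom check from $(\comodsx)_{\op E}^{\mathrm{left}}$ to $\espec$. Explicitly, given $f, g$ cofibrations in $(\comodsx)^{\mathrm{left}}$, one has $U(f \,\square\, g) = Uf \,\square\, Ug$, which is a cofibration in $\espec$, acyclic whenever one of $f, g$ is a weak equivalence; the left-induction criterion then upgrades this conclusion to $(\comodsx)^{\mathrm{left}}$. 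Similarly, a transfinite composition of pushouts of maps of the form $j \otimes Z$ with $j$ an acyclic cofibration is sent by $U$ to the analogous diagram built from $Uj \wedge UZ$, which the monoid axiom in $\espec$ forces to be a weak equivalence; reflecting along $U$ yields the monoid axiom for $(\comodsx)_{\op E}^{\mathrm{left}}$.
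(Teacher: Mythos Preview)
Your proposal for part~(4) is correct and follows the paper's approach exactly: lift the monoidal structure to $\comodsx$ via the ring spectrum structure on $\sx$, observe that $U$ is strong (symmetric) monoidal, and apply Proposition~\ref{prop:monoid-axiom}. The paper resolves your anticipated obstacle---that $\espec$ is a monoidal model category satisfying the monoid axiom---by citing \cite[5.1]{barnes-roitzheim} and \cite[3.8]{barnes.split.monoidal} rather than arguing directly.
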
}

{\begin{rmk}\label{rmk:pushforward-stable}  Stabilizing the adjunctions in Corollary \ref{cor:we} by~\cite[Theorem 9.3]{hovey-spectra}, it follows from Theorem \ref{thm:stable} that if $a:X'\to X$ is a simplicial map, then the adjunction
 $$\adjunct{(\cat {Comod}_{\sx'})_{\op E}^{\mathrm{st}}}{(\comodsx)_{\op E}^{\mathrm{st}}}{a_{*}}{a^{*}}$$
 is a Quillen pair that is a Quillen equivalence if $a$ is an $\op E_{*}$-equivalence.  The same holds for the adjunction
  $$\adjunct{(\cat {Comod}_{\sx'})_{\op E}^{\mathrm{left}}}{(\comodsx)_{\op E}^{\mathrm{left}},}{a_{*}}{a^{*}}$$
  as the cofibrations and weak equivalences in both structures are created by the forgetful functor to $\espec$, and $a_{*}$ does not change the underlying morphism of spectra.
\end{rmk}}

Our unstable Koszul duality result (Theorem \ref{thm:koszul}) gives rise to a stable version as well.

{\begin{thm}\label{thm:koszul-stable} If $X$ is a reduced simplicial set and $\op E$ is any generalized homology theory, then there is a Quillen equivalence
$$\adjunct{(\modgsx)_{\op E}^{\mathrm{st}}}{(\comodsx)_{\op E}^{\mathrm{st}}}{-\wedge_{\sgx}\Sigma ^{\infty}\P X}{R},$$
where $(\comodsx)^{\mathrm{st}}_{\op E}$ is  the stabilized model category structure of Theorem \ref{thm:stable}, and $(\modgsx)^{\mathrm{st}}_{\op E}$ is the model category structure given by stabilizing $(\modgx)_{\op E}$.
\end{thm}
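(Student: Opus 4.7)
The plan is to obtain Theorem~\ref{thm:koszul-stable} by applying Hovey's stabilization machinery \cite[Theorem 9.3]{hovey-spectra} to the unstable Quillen equivalence
$$\adjunct{(\modgx)_{\op E}}{(\comodx)_{\op E}}{L}{R_{0}}$$
of Theorem~\ref{thm:koszul}, where $L = -\wedge_{(\G X)_{+}}(\P X)_{+}$.

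First I would check the hypotheses of Hovey's theorem. Both source and target must be left proper, cellular (or combinatorial) and simplicial.  For $(\comodx)_{\op E}$ these properties follow from Theorem~\ref{thm:comodx} and Proposition~\ref{prop:cof.gen.comod}, and for $(\modgx)_{\op E}$ they are inherited by right induction from $(\sset)_{\op E}$, using that $\usset$ is locally presentable and $-\wedge (\G X)_{+}$ preserves the generating (acyclic) cofibrations. In addition, the stabilization endofunctors (smashing with $S^{1}$) on both sides must be Quillen endofunctors, which is standard in both model structures since $S^{1}$ is cofibrant and $-\wedge S^{1}$ is left adjoint to the pointed loop functor.

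Second, I would verify the key compatibility: the left Quillen functor $L$ commutes with $-\wedge S^{1}$ up to natural isomorphism. Indeed, for any pointed $\G X$-space $Y$, the circle $S^{1}$ carries a trivial $\G X$-action, so
$$L(Y\wedge S^{1}) \;=\; (Y\wedge S^{1})\wedge_{(\G X)_{+}}(\P X)_{+} \;\cong\; \big(Y\wedge_{(\G X)_{+}}(\P X)_{+}\big)\wedge S^{1} \;=\; L(Y)\wedge S^{1},$$
naturally in $Y$. This is precisely the hypothesis in \cite[Theorem~9.3]{hovey-spectra} needed to prolong $(L,R_{0})$ to a Quillen pair between the stabilized categories, and to conclude that it is a Quillen equivalence whenever the unstable adjunction is.

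Third, I would identify the prolonged left adjoint with $-\wedge_{\sgx}\Sigma^{\infty}\P X$. Since the stabilization of $(\modgx)_{\op E}$ with respect to $-\wedge S^{1}$ can be identified with the category of $\sgx$-module symmetric spectra (as $\sgx = \Sigma^{\infty}(\G X)_{+}$ is the suspension spectrum of a simplicial monoid, and stabilizing modules over a simplicial monoid yields modules over its suspension spectrum; see \cite{schwede-stalgthy} and Section~\ref{sec:stabilization}), and similarly $(\comodsx)^{\mathrm{st}}_{\op E}$ is identified with $\sx$-comodules in $\espec$, the prolongation of the spacewise functor $-\wedge_{(\G X)_{+}}(\P X)_{+}$ is the relative smash product $-\wedge_{\sgx}\Sigma^{\infty}\P X$, since the latter is computed as the levelwise coequalizer defining the balanced smash product over the ring spectrum $\sgx$.

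The main obstacle is the last identification, namely verifying that Hovey's abstract prolongation of $L$ agrees, as a left Quillen functor on spectrum categories, with the explicit formula $-\wedge_{\sgx}\Sigma^{\infty}\P X$ under the identifications of $(\modgsx)_{\op E}^{\mathrm{st}}$ and $(\comodsx)_{\op E}^{\mathrm{st}}$ with module/comodule categories in $\espec$. This amounts to showing that Hovey's prolongation commutes with the tautological identifications, which boils down to the fact that $\Sigma^{\infty}$ is strong monoidal and that $\P X$ is a cofibrant $(\G X)_{+}$-space, so that $\Sigma^{\infty}\P X$ is cofibrant as an $\sgx$-module and the relevant relative smash products are computed by their derived versions.
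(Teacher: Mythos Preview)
Your proposal is correct and follows essentially the same approach as the paper: apply \cite[Theorem~9.3]{hovey-spectra} to the unstable Quillen equivalence of Theorem~\ref{thm:koszul}, after verifying the relevant hypotheses. The paper's proof is terser---it simply notes that $-\wedge_{(\G X)_{+}}(\P X)_{+}$ is a simplicial functor (which encompasses your compatibility check with $-\wedge S^{1}$) and invokes Theorem~\ref{thm:stable} for the identification of the stabilized categories, without dwelling on the explicit form of the prolonged left adjoint; your more careful discussion of the identifications is a welcome expansion but not a different argument.
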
}

A significant, immediate consequence of Theorem \ref{thm:stable} is  that categories of ``algebraic objects'' in $\comodsx$, such as  categories of modules over monoids in $\comodsx$ and of algebras over monoids in $\comodsx$, admit model category structures right-induced from {$(\comodsx)_{\op E}^{\mathrm{left}}$} \cite[Theorem 4.1]{schwede-shipley}, \cite[Theorem 1.3]{muro}.  Because of  its importance for the study of homotopic Hopf-Galois extensions of ring spectra  \cite{rognes},  \cite{hess:hhg}, we are particularly interested in the following case of this general principle.

\begin{notn} {Let $\alg_{\op E}$ denote the category of symmetric ring spectra, i.e., of monoids in $(\spec, \wedge, \bold S)$, where $\bold S$ is the sphere spectrum, endowed with the stable model category structure right-induced via the adjunction
\begin{equation}\label{eqn:tu}
\adjunct{\espec}{\alg_{\op E}}{T}{U},
\end{equation}
where $T$ is the free monoid functor and $U$ is the forgetful functor \cite{hss}, \cite{schwede-shipley}. Here we have used the fact that $\espec$ is a monoidal model category that satisfies the monoid axiom  by \cite[5.1]{barnes-roitzheim} and  \cite[3.8]{barnes.split.monoidal}.}  
\end{notn}

{\begin{defn}Let $H$ be a simplicial monoid. An object of the  \emph{category of $\sh$-comodule algebras}, denoted $\algh$, is a symmetric ring spectrum $\bold R$ endowed with a coassociative, counital morphism 
$$\rho: \bold R \to \bold R \wedge \sh$$
of symmetric ring spectra.   Morphisms in $\algh$ are morphisms of spectra that preserve both the multiplicative structure and the $\sh$-coaction.
\end{defn}

Note that $\algh$ can be viewed as the category of algebras in $\comod_{\sh}$ or the category of $\sh$-comodules in $\alg$.}

{\begin{cor}\label{cor:comod-alg} Let $H$ be a simplicial monoid and $\op E$ a generalized homology theory.    There is a cofibrantly generated  model category structure $(\algh)_{\op E}$ with respect to which the cofree/forgetful adjunction
$$\adjunct{(\algh)_{\op E}}{(\alg)_{\op E}}{U}{F_{\sh}}$$
is a Quillen pair.
\end{cor}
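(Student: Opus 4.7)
The strategy is to identify $\algh$ as the category of monoids in the monoidal model category $(\comodsh)_{\op E}^{\mathrm{left}}$ and to right-induce a cofibrantly generated model structure via the Schwede--Shipley lifting theorem. By parts (1) and (4) of Theorem \ref{thm:stable}, $(\comodsh)_{\op E}^{\mathrm{left}}$ is a combinatorial, monoidal model category satisfying the monoid axiom, so \cite[Theorem 4.1]{schwede-shipley} (or \cite[Theorem 1.3]{muro} to avoid unit assumptions) produces the desired structure $(\algh)_{\op E}$, in which weak equivalences and fibrations are exactly those morphisms whose underlying maps in $\comodsh$ are such. Combined with Theorem \ref{thm:stable}(1), the weak equivalences and fibrations in $(\algh)_{\op E}$ are precisely the morphisms whose underlying maps of symmetric spectra are, respectively, $\op E_{*}$-stable equivalences and stable fibrations in $\espec$.

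Since weak equivalences and fibrations in $(\alg)_{\op E}$ are likewise detected on underlying spectra via the adjunction (\ref{eqn:tu}), the forgetful functor $U : (\algh)_{\op E} \to (\alg)_{\op E}$ both preserves and reflects weak equivalences and fibrations. To show that $U \dashv F_{\sh}$ is a Quillen pair, it therefore suffices to verify that $U$ preserves cofibrations, as preservation of acyclic cofibrations then follows. This follows from the commutative square of left adjoints
\[
\xymatrix{
(\comodsh)_{\op E}^{\mathrm{left}} \ar[rr]^-{T_c} \ar[d]_{U_0} && (\algh)_{\op E} \ar[d]^{U} \\
\espec \ar[rr]_-{T} && (\alg)_{\op E}
}
\]
where $T$ and $T_c$ are the respective free monoid functors and $U_0$ denotes the forgetful functor of Theorem \ref{thm:stable}(1). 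Commutativity reduces to the observation that $U_0$ is strong monoidal with respect to the smash product (by the description of the monoidal product in Lemma \ref{lem:comodx-monoid}, which stabilizes), so $U_0$ intertwines the two tensor algebra constructions underlying $T_c$ and $T$.

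By construction, the generating cofibrations and generating acyclic cofibrations of $(\algh)_{\op E}$ are the images under $T_c$ of those of $(\comodsh)_{\op E}^{\mathrm{left}}$. Applying $U$, they become morphisms of the form $T(U_0(i))$; since $U_0$ takes cofibrations and acyclic cofibrations to cofibrations and acyclic cofibrations in $\espec$ by Theorem \ref{thm:stable}(1), and $T$ is left Quillen via (\ref{eqn:tu}), these are cofibrations, respectively acyclic cofibrations, in $(\alg)_{\op E}$. Because $U$ is a left adjoint and hence preserves all colimits, and both cofibrations and acyclic cofibrations in $(\alg)_{\op E}$ are closed under transfinite composition, pushout, and retract, it follows that $U$ preserves every cofibration and every acyclic cofibration, completing the proof. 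The main obstacle lies in establishing the commutativity of the square above, which in turn rests on the strong monoidality of the forgetful functor from comodule spectra to spectra.
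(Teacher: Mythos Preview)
Your proposal is correct and follows essentially the same approach as the paper: right-induce the model structure on $\algh$ from $(\comodsh)_{\op E}^{\mathrm{left}}$ using the monoid axiom, then verify that $U$ is left Quillen by pushing the generating (acyclic) cofibrations around the commutative square of left adjoints $U\circ T_{c}=T\circ U_{0}$. The only cosmetic difference is that the paper invokes \cite[Theorem 1.3]{muro} applied to the central monoidal left Quillen functor $\eta_{*}:\espec\to(\comodsh)_{\op E}^{\mathrm{left}}$ and the associative operad, whereas you appeal directly to Schwede--Shipley; both routes yield the same right-induced structure, and the Quillen pair verification is identical.
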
}

Thanks to Corollary \ref{cor:comod-alg}, it is now possible to give a rigorous formulation of the notion of the \emph{homotopy coinvariants} of the coaction of $\sh$ on a $\sh$-comodule algebra $(\bold R, \rho)$, which is essential in the definition of a homotopic Hopf-Galois extension, as originally formulated in \cite {rognes} {for a very special choice of fibrant replacement} and generalized in \cite{hess:hhg}. If $(\bold R^{f},\rho^{f})$ is a fibrant replacement for $(\bold R, \rho)$ in $\algh$, then a model for the homotopy coinvariants of $(\bold R, \rho)$ is the  equalizer in $\algh$
\begin{equation}\label{eqn:hocoinv}(\bold R, \rho)^{hco\, \sh}=\operatorname{equal}\big(\bold R^{f}\egal {\rho^{f}}{\bold R^{f}\wedge \eta} \bold R^{f}\wedge \sh\big),
\end{equation}
where $\eta:\bold S \to \sh$ is the unit of the ring spectrum $\sh$.  {Defined thus, the homotopy coinvariants functor is a model for the total right derived functor of the right Quillen functor from $\algh$ to $\alg$ that computes coinvariants of the $\sh$-coaction.}

We prove Theorem \ref{thm:stable}, Theorem \ref{thm:koszul-stable}, and Corollary \ref{cor:comod-alg}  in Section \ref{sec:stab-comod}, after having recalled Hovey's stabilization construction from \cite{hovey-spectra} and proved two technical results that we need for the proof of Theorem \ref{thm:stable} in Section \ref{sec:machine}.

\subsection{The stabilization machine}\label{sec:machine}

We begin by recalling Hovey's construction of the stabilization of certain model categories with respect to nice enough endofunctors. Hovey requires that the category to be stabilized be {\em cellular} \cite[Definition 12.1.1]{hirschhorn}, so that Bousfield localizations exist.  Since all objects in a locally presentable category are small, one can show that any combinatorial model category where cofibrations are effective monomorphisms is a cellular model category.  In all of the categories that we {localize} here, the cofibrations are indeed effective monomorphisms.

\begin{defn}\label{defn:stabilize} \cite[\S 7,8]{hovey} Let $\C$ and $\cat D$ be left proper, combinatorial model categories such that the cofibrations are effective monomorphisms.  Furthermore, assume that $\C$ is a monoidal model category, and $\cat D$ is a $\C$-model category 
with a set of generating cofibrations $\I$ {(see Remark~\ref{rmk:gen.cof})},
where $-\otimes -:\cat D\times \cat C\to \cat D$ denotes the tensoring of $\cat D$ over $\cat C$.  Let $K$ be a cofibrant object in $\cat C$.

The objects of the \emph{category of symmetric $K$-spectra in $\cat D$}, denoted $\sps(\cat D, K)$, are sequences of pairs $\bold X=(X_{n},\sigma_{n})_{n\geq 0}$, where each $X_{n}$ is an object in $\cat D$ endowed with a left $\Sigma_{n}$-action, each $\sigma_{n} :X_{n}\otimes K\to X_{n+1}$ is a $\Sigma_{n}$-equivariant morphism in $\cat D$, and the composite
$$X_{n}\otimes K^{\otimes p} \xrightarrow {\sigma _{n}\otimes K^{\otimes p-1}} X_{n+1}\otimes K^{\otimes p-1} \xrightarrow {\sigma _{n}\otimes K^{\otimes p-2}}\cdots \xrightarrow{\sigma_{n+p-1}} X_{n+p}$$
is $\Sigma_{n}\times \Sigma_{p}$ equivariant for all $n$ and all $p$.  A morphism $\bold f: \bold X \to \bold Y$ of symmetric spectra consists of a sequence $f_{n}:X_{n}\to Y_{n}$ of equivariant morphisms, commuting with the structure maps.

\begin{rmk}\label{rmk:spectra-locpres}  Symmetric $K$-spectra can also be described as modules over a certain commutative monoid in the category of symmetric sequences in $\cat D$ \cite[Definition 7.2]{hovey-spectra}.  It follows that if $\cat D$ is locally presentable, then so is $\sps(\cat D, K)$ \cite{adamek-rosicky}.
\end{rmk}

A morphism $\bold f: \bold X \to \bold Y$ in $\sps(\cat D, K)$ is a weak equivalence (respectively, fibration) in the \emph{projective model category structure}  if $f_{n}:X_{n} \to Y_{n}$ is a weak equivalence (respectively, fibration) in $\cat D$ for all $n$.   

Let $F_{n}: \cat D \to \sps (\cat D, K)$ denote the left adjoint to the evaluation functor 
$$\operatorname{Ev}_{n}:\sps (\cat D, K)\to \cat D: \bold X \mapsto X_{n}.$$
Let 
{\small\begin{equation}\label{eqn:s}
\mathcal S=\{F_{n+1}(X^{c}\otimes K) \to F_{n}X^{c}\mid X \text{ a domain or codomain of a map in $\I$}, n\geq 0\},
\end{equation}}

\noindent where the superscript $c$ denotes cofibrant replacement, and the morphisms are the transposes of the morphisms
$$X^{c}\otimes K\to \operatorname{Ev}_{n+1}F_{n}X^{c}=\Sigma_{n+1}\times (X^{c}\otimes K)$$
that pick out the identity component. The left Bousfield localization \cite{hirschhorn, lurie} of the projective model category structure on $\sps(\cat D, K)$ with respect to $\mathcal S$ is the \emph{stable model category structure}. 

\begin{rmk}\label{rmk:gen.cof}
{Although this definition relies on a choice of a set of generating cofibrations, the model structure is independent of this choice by~\cite[Theorem 8.8]{hovey-spectra}.}
\end{rmk}
\end{defn}

\begin{notn}  The projective model category structure on $\sps(\cat D, K)$ is denoted $\spproj(\cat D, K)$, while the stable model category structure is denote $\spst(\cat D, K)$.
\smallskip

In the special case where $\cat C=\cat D= \sset$,  and $K=S^{1}=\Delta[1]/\del \Delta [1]$, we often simplify notation considerably and write 
$$\spec =\spst \big(\ksset, S^{1}\big)\qquad \text{and} \qquad \espec= \spst \big((\sset)_{\op E}, S^{1}\big),$$
and
$$\lspec =\spproj \big(\ksset, S^{1}\big)\qquad \text{and} \qquad \lespec= \spproj \big((\sset)_{\op E}, S^{1}\big),$$
where $\ksset$ denotes the usual Kan model category structure on $\sset$, and $\op E$ is a generalized reduced homology theory. 
\end{notn}

\begin{rmk}\label{rmk:cofib-gen-e} Recall from Section \ref{subsec:right} that the set of cofibrant generators for $\esset$ is the same as the set of cofibrant generators for $\ksset$.  It follows from (\ref{eqn:s}) that $\spec$ and $\espec$ are obtained by left Bousfield localization of $\spproj$ and $\lespec$ with respect to exactly the same set of maps.
\end{rmk}

\begin{rmk}\label{rmk:stabilize-comod}  We can apply Definition \ref{defn:stabilize} to $(\comodx)_{\mathrm{Kan}}$ and to $(\comodx)_{\op E}$, as we have seen that both are left proper, combinatorial, simplicial model categories.  Since their cofibrations are levelwise monomorphisms of sets, they are certainly effective monomorphisms.   
\end{rmk}

The following results play an essential role in the proof of Theorem \ref{thm:stable}.

\begin{prop}\label{prop:local.spec} If $\op E_*$ is a generalized reduced homology theory such that every levelwise $\op E_{*}$-equivalence of symmetric spectra is a stable equivalence, then
the stable model category structures $\espec$ and $\spec$ agree.
\end{prop}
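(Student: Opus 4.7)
The plan is to exploit Remark \ref{rmk:cofib-gen-e}, which observes that $\spec$ and $\espec$ are left Bousfield localizations of $\lspec$ and $\lespec$ respectively with respect to the same set $\mathcal{S}$. Consequently, they share the same underlying category (symmetric spectra) and the same cofibrations, namely those of $\lspec = \lespec$. Since acyclic fibrations in any model category are characterized by the right lifting property against the cofibrations, the classes of acyclic fibrations in $\spec$ and $\espec$ also coincide. It therefore suffices to show that the classes of acyclic cofibrations in $\spec$ and $\espec$ coincide, as then the weak equivalences agree via the factorization of any weak equivalence as an acyclic cofibration followed by an acyclic fibration. I would accomplish this by proving that the identity functor is left Quillen in both directions.

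First, that $\id\colon\spec \to \espec$ is left Quillen does not require the hypothesis. By the universal property of left Bousfield localization, it is enough to verify that $\id\colon\lspec \to \espec$ is left Quillen and that the maps in $\mathcal{S}$ are weak equivalences in $\espec$; the latter holds by definition of $\espec$ as the localization of $\lespec$ at $\mathcal{S}$. For the former, the cofibrations agree, and any acyclic cofibration of $\lspec$ is a cofibration whose levelwise components are weak homotopy equivalences, hence \emph{a fortiori} $\op E_*$-equivalences, so it is an acyclic cofibration in $\lespec$, and therefore remains one in $\espec$.

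The hypothesis enters crucially in the reverse direction, showing that $\id\colon\espec \to \spec$ is left Quillen. Again by the universal property, it suffices to show that $\id\colon\lespec \to \spec$ is left Quillen, since the maps in $\mathcal{S}$ are weak equivalences in $\spec$ by construction. Cofibrations agree, and an acyclic cofibration in $\lespec$ is a cofibration that is a levelwise $\op E_*$-equivalence; the hypothesis asserts precisely that any such map is a stable equivalence, and therefore an acyclic cofibration in $\spec$.

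Combining the two steps yields equality of acyclic cofibrations and hence of weak equivalences, which together with the equality of cofibrations noted at the outset gives $\spec = \espec$. The only interesting point is the transfer of the hypothesis on levelwise $\op E_*$-equivalences across the universal property of left Bousfield localization; everything else is formal bookkeeping.
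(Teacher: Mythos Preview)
Your proposal is correct and follows essentially the same route as the paper: both arguments show that the identity functors $\spec \to \espec$ and $\espec \to \spec$ are left Quillen by applying the universal property of left Bousfield localization to the projective structures, using Remark \ref{rmk:cofib-gen-e} for the agreement of cofibrations and the localizing set $\mathcal S$, and invoking the hypothesis on $\op E_{*}$ precisely to handle acyclic cofibrations in $\lespec$ in the reverse direction. The only cosmetic difference is that the paper phrases the reduction as ``preserving weak equivalences'' (since cofibrations already agree) and cites \cite[Theorem 8.8]{hovey-spectra} for why the maps in $\mathcal S$ become weak equivalences after stabilization, whereas you unpack this directly.
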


\begin{ex}\label{ex-stable}  Both stable homotopy, $\pist$, and integral homology, $\HZ_{*}$, satisfy the hypothesis on $\op E_{*}$ in the theorem above. To see this, recall first that for a symmetric spectrum $\bold X$,  $\pi_k \bold X = \colim_n \pi_{k+n}X_n$.  By \cite[Lemma 2.2.3]{shipley-thh}, $\pi_k \bold X \cong \colim_n \pi_k^s X_n$. Thus, a map of symmetric spectra that is a $\pi_*^s$-isomorphism in each level induces an isomorphism on $\pi_*$ and hence is a stable equivalence by \cite[Theorem 3.1.11]{hss}. 

On the other hand, if $f: X \to Y$ is an $\hz_*$-equivalence of simplicial sets, then $\Sigma^2 f$ is a homotopy equivalence by Whitehead's theorem, and so $f$ is also a $\pist$-equivalence.  It follows that if a map of symmetric spectra is a levelwise $\hz_{*}$-equivalence, then it is a levelwise $\pist$-equivalence and hence a stable equivalence.  
More generally, any generalized homology theory that is  Bousfield equivalent to $\pist$ satisfies the hypothesis of the proposition above; see~\cite{bousfield.class}.
\end{ex}

\begin{rmk}{We consider twisted homology equivalences for comodule spectra in Proposition~\ref{prop:stable:twisted} below.  
 By Lemma~\ref{lem.Hq.HZ}, any $\Hq$-equivalence induces an $\HZ_*$-equivalence.  Hence any level $\Hq$-equivalence is also a stable equivalence.  }
 \end{rmk}

\begin{proof}[Proof of Proposition \ref{prop:local.spec}] 
Using the universal property of localizations, we show below that the identity functors $\spec\to \espec$ and $\espec\to \spec$ are both left Quillen functors.  It follows that the cofibrations and weak equivalences agree, and hence that the model category structures agree completely.  Note  that since the cofibrations agree in $\ksset$ and $\esset$, they also agree in $\lespec$, $\lspec$, $\espec$, and $\spec$, so that it is sufficient to show that the identity functors in question preserve weak equivalences.  Let $\cS$ denote the set of maps by which both $\lspec$ and $\lespec$ are localized to obtain $\spec$ and $\espec$.

{Since the identity functor $\ksset \to \esset$ is left Quillen, the identity functor $\lspec \to \lespec$ is also left Quillen.} 
Composing this with stabilization $\lespec \to \espec$ gives a left Quillen functor $\lspec \to \espec$, which sends maps in $\cS$ to weak equivalences in $\espec$, by \cite[Theorem 8.8]{hovey-spectra}.  Thus, by \cite[Definition 3.1.1]{hirschhorn}, the identity functor $\spec \to \espec$ is left Quillen.

Next we show that the identity functor $\lespec \to \spec$ is a left Quillen functor.     By definition the equivalences in $\lespec$ are levelwise $\op E_{*}$-equivalences.   It follows from the hypothesis on $\op E_{*}$ that  the identity functor $\lespec \to \spec$ is left Quillen.  Since \cite[Theorem 8.8]{hovey-spectra} implies that the identity functor $\lespec \to \spec$ sends maps in $\cS$ to weak equivalences in $\spec$, the identity functor $\espec \to \spec$ is also left Quillen, by \cite[Definition 3.1.1]{hirschhorn}.  
\end{proof}

{Remark \ref{rmk:stabilize-comod} implies that the proposition below makes sense.}

\begin{prop}\label{prop:proj-left-ind}  The adjunction 
$$\adjunct{\sps (\comodx, S^{1})}{\sps(\sset, S^{1})}{\sps (U)}{\sps (F_{X_{+}})}$$ 
left-induces a combinatorial $\spec$-model category structure, $\cat {Sp}^{\Sigma}_{\mathrm{pr, left}}\big((\comodx), S^{1}\big)_{\op E}$, 
from $\lespec$, which is Quillen equivalent to 
$\spproj\big((\comodx)_{\op E}, S^{1}\big).$
\end{prop}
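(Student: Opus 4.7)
The plan is to verify the hypotheses of the left-induction theorem (Theorem~\ref{thm:quillen-path}) for the adjunction $\sps(U) \dashv \sps(F_{X_{+}})$, thereby obtaining existence of $\cat{Sp}^{\Sigma}_{\mathrm{pr, left}}\big((\comodx), S^{1}\big)_{\op E}$, and then to deduce its Quillen equivalence with $\spproj((\comodx)_{\op E}, S^{1})$ via the identity functor, exploiting the fact that both structures share the same class of weak equivalences.

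For the set-up of the left induction, $\sps(\comodx, S^{1})$ is locally presentable by Remark~\ref{rmk:spectra-locpres} combined with Lemma~\ref{lem:comodx-bicomplete}, while $\lespec$ is combinatorial, and both $\sps(U)$ and $\sps(F_{X_{+}})$ are accessible as left and right adjoints between locally presentable categories. The key step in invoking Theorem~\ref{thm:quillen-path} is to produce good cylinder objects in $\sps(\comodx, S^{1})$. Following the strategy of the proof of Theorem~\ref{thm:comodx}, my plan is to use the levelwise simplicial tensoring of $\comodx$ over $\sset$ (Lemma~\ref{lem:comodx-simplcat}), which extends to a tensoring of $\sps(\comodx, S^{1})$ over $\sset$. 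For $\mathbf X \in \sps(\comodx, S^{1})$, the factorization
$$\mathbf X \vee \mathbf X \cong \mathbf X \totimes (\del\Delta[1])_{+} \hookrightarrow \mathbf X \totimes \Delta[1]_{+} \to \mathbf X \totimes \Delta[0]_{+} \cong \mathbf X$$
is the candidate good cylinder: the second map is a levelwise simplicial homotopy equivalence and hence a weak equivalence in $\lespec$, while the first is a levelwise monomorphism that must be verified to be a projective cofibration in $\lespec$.

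With existence of $\cat{Sp}^{\Sigma}_{\mathrm{pr, left}}\big((\comodx), S^{1}\big)_{\op E}$ in hand, combinatoriality is automatic from left-induction, and the $\spec$-enrichment is inherited from the simplicial enrichment of $\comodx$ together with the $\spec$-enrichment of $\lespec$, since cofibrations and weak equivalences are created by $\sps(U)$. The Quillen equivalence with $\spproj((\comodx)_{\op E}, S^{1})$, which exists by Hovey's general theory applied to the combinatorial model category $(\comodx)_{\op E}$ of Theorem~\ref{thm:comodx}, then follows from three observations: first, the two structures have the same class of weak equivalences---namely, maps whose underlying morphisms are levelwise $\op E_{*}$-equivalences---because $U: (\comodx)_{\op E} \to (\sset)_{\op E}$ creates weak equivalences; second, the identity functor $\spproj((\comodx)_{\op E}, S^{1}) \to \cat{Sp}^{\Sigma}_{\mathrm{pr, left}}\big((\comodx), S^{1}\big)_{\op E}$ preserves cofibrations, because $\sps(U)$ is left Quillen between the ordinary projective structures, sending generating projective cofibrations from $(\comodx)_{\op E}$-spectra to projective cofibrations in $\lespec$ (this uses that $U$ is left Quillen by Theorem~\ref{thm:comodx}); third, since the classes of weak equivalences agree, this identity left Quillen functor is automatically a Quillen equivalence.

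The principal obstacle is verifying the cofibration condition for the candidate cylinder object, because cofibrations in $\lespec$ are the projective ones---not merely levelwise monomorphisms---and not every object of $\lespec$ is cofibrant. I anticipate resolving this by exploiting the compatibility of the simplicial tensoring with the free-spectrum functors $F_{n}$ and the pushout-product axiom governing the $\sset$-enrichment of $\lespec$, so that the required cofibration condition can be verified on cell presentations of the underlying spectrum of $\mathbf X$.
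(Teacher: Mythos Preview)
Your proposal has a genuine gap in the existence argument. You invoke Theorem~\ref{thm:quillen-path}, but that theorem has two hypotheses, and you address only the second. Condition~(1) requires cofibrant replacements in $\sps(\comodx, S^{1})$ with respect to the putative left-induced structure, i.e., objects whose underlying symmetric spectra are projectively cofibrant in $\lespec$. Unlike the unstable situation of Theorem~\ref{thm:comodx}, where every object of $\esset$ is cofibrant so condition~(1) is trivial, in $\lespec$ this fails badly. Your proposed resolution of the cylinder obstacle, via ``cell presentations of the underlying spectrum of $\mathbf X$,'' presupposes exactly this cofibrancy: the pushout-product axiom in a simplicial model category tells you that $\mathbf X\totimes(\partial\Delta[1])_{+}\to \mathbf X\totimes\Delta[1]_{+}$ is a cofibration only when $\mathbf X$ is cofibrant. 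You could patch this by borrowing cofibrant replacements from the already-existing projective structure $\spproj\big((\comodx)_{\op E},S^{1}\big)$, since $\sps(U)$ is left Quillen from it to $\lespec$, but at that point you are using the second model structure to build the first, which makes the existence argument circuitous.

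The paper sidesteps the issue entirely by appealing to Theorem~\ref{thm:mr} rather than Theorem~\ref{thm:quillen-path}, verifying the acyclicity condition directly. The key observation is that the generating projective cofibrations of $\lespec$ are $\I_{\mathrm{pr}}=\{F_{n}(i_{n})_{+}\}$, and an easy computation shows
\[
\sps(U)^{-1}(\I_{\mathrm{pr}})=\bigcup_{n\geq 0} F_{n}(\I_{c}),
\]
which is precisely the set of generating cofibrations for $\spproj\big((\comodx)_{\op E},S^{1}\big)$. Hence any map with the right lifting property against $\sps(U)^{-1}(\Cof_{\mathrm{pr}})$ already lifts against these generators, so is an acyclic fibration in $\spproj\big((\comodx)_{\op E},S^{1}\big)$ and in particular a levelwise weak equivalence, establishing the required inclusion $\rlp{(\sps(U)^{-1}\Cof_{\mathrm{pr}})}\subset\sps(U)^{-1}\WE_{\mathrm{pr}}$. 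Your argument for the Quillen equivalence, by contrast, is essentially the paper's: both model structures have levelwise weak equivalences, and the identity is left Quillen by Lemma~\ref{lem:univ-left}.
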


\begin{proof} Let $\Cof_{\text{pr}}$ and $\WE_{\text{pr}}$ denote the cofibrations and weak equivalences, respectively, in $\lespec$. Theorem \ref{thm:mr} and Remark \ref{rmk:spectra-locpres} together imply that it suffices to show that 
\begin{equation}\label{eqn:contains}
\rlp{\big(\sps (U)^{-1} (\Cof_{\text{pr}})\big)} \subset \sps (U)^{-1}(\WE_{\text{pr}})
\end{equation}
in order to prove the existence of the desired left-induced model category structure.  By \cite[Theorem 8.2]{hovey-spectra}, a set of generating cofibrations of $\lespec$ is
$$\I_{\text{pr}}=\{F_{n}\del \Delta[n]_{+}\xrightarrow {F_{n}(i_{n})_{+}}F_{n}\Delta[n]_{+} \mid n\geq 0\},$$
since $\{\del \Delta[n]_{+}\xrightarrow {(i_{n})_{+}}\Delta[n]_{+} \mid n\geq 0\}$ is a set of generating cofibrations for $\esset$.   {Since $\I_{\text{pr}} \subseteq \Cof_{\text{pr}}$,} it follows that 
$$\rlp{\big(\sps (U)^{-1} (\Cof_{\text{pr}})\big)}\subset \rlp{\big(\sps (U)^{-1} (\I_{\text{pr}})\big)}.$$
Recall the set $\I_{c}$ of cofibrant generators for $(\comodx)_{\op E}$ from the proof of Proposition \ref{prop:cof.gen.comod}.  An easy calculation shows that
$$\sps (U)^{-1} (\I_{\text{pr}})= \bigcup_{n}F_{n}(\I_{c}),$$
which, by \cite[Theorem 8.2]{hovey-spectra}, is the set of cofibrant generators for $\spproj(\comodx, S^{1})$. Every map in  $\rlp{\big(\sps (U)^{-1} (\I_{\text{pr}})\big)}$ is thus an acyclic fibration in $\spproj(\comodx, S^{1})$, and therefore in particular a levelwise weak equivalence.  Since the elements of $\sps (U)^{-1}(\WE_{\text{pr}})$ are exactly the levelwise weak equivalences, the desired inclusion (\ref{eqn:contains}) holds.

To see that $\sps_{\mathrm{pr, left}} (\comodx, S^{1})_{\op E}$ and $\spproj\big((\comodx)_{\op E}, S^{1})\big)$ are Quillen equivalent, observe first that the weak equivalences in both cases are the levelwise weak equivalences, then apply Lemma \ref{lem:univ-left}, which implies that 
$$\id: \spproj\big((\comodx)_{\op E}, S^{1}\big) \to \sps_{\mathrm{pr, left}} (\comodx, S^{1})_{\op E}$$
is a left Quillen functor.  {Since the weak equivalences in these two structures are identical, it follows that this identity functor is a Quillen equivalence.}
\end{proof}

\subsection{Stabilization of $\comodx$}\label{sec:stab-comod}

We now have all the tools necessary to prove the desired stabilization result for comodules over $X_{+}$. We begin by observing that $\sx$-comodules in symmetric spectra are the same as symmetric spectra of $X_{+}$-comodules.

{
\begin{prop}\label{prop:comod=sp}
The category, of $\sx$-comodules in $\sps \big (\sset, S^{1}\big)$, denoted $\comodsx$, is isomorphic  to $\sps \big(\comodx, S^{1}\big)$, the category of symmetric spectra of $X_{+}$-comodues. 
\end{prop}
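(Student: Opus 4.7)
The plan is to exhibit mutually inverse functors between the two categories by exploiting the fact that smashing with a suspension spectrum is particularly simple.

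First, I would establish the key computational identity: for any symmetric spectrum $\bold Y$, there is a natural isomorphism of symmetric spectra
\[
\bold Y \wedge \sx \;\cong\; \bold Y \wedge X_{+},
\]
where the right-hand side denotes the levelwise smash of the symmetric spectrum $\bold Y$ with the pointed simplicial set $X_+$; that is, the $n$-th level is $Y_n \wedge X_+$ with its evident $\Sigma_n$-action and with structure maps $\sigma_n \wedge X_+$. This follows from the fact that $\Sigma^\infty(-)_{+}$ is left adjoint to evaluation at level zero and is strong symmetric monoidal, or can be checked directly from Hovey--Shipley--Smith's formula for the smash product together with the fact that $\sx$ is a suspension spectrum.

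Next, given an object $(\bold Y, \rho)$ of $\comodsx$, transporting the coaction along this isomorphism yields a map of symmetric spectra $\bold Y \to \bold Y \wedge X_+$, which is precisely a sequence of $\Sigma_n$-equivariant pointed maps $\rho_n : Y_n \to Y_n \wedge X_+$ that commute with the structure maps $\sigma_n$. Under this identification, the coassociativity axiom $(\rho \wedge \sx)\rho = \bigl(\bold Y \wedge \Sigma^\infty(\Delta_X)_+\bigr)\rho$ and the counit axiom for $\rho$ translate levelwise (using that the symmetric monoidal structure and the diagonal and augmentation $\sx \to \sx \wedge \sx$ and $\sx \to \bold S$ are obtained from $\Delta_X$ and $\varepsilon$ by applying $\Sigma^\infty(-)_+$) into the coassociativity and counit axioms for each $\rho_n$, together with compatibility of each $\rho_n$ with $\sigma_n$ and with the $\Sigma_n$-action. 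Thus $(\bold Y, \rho)$ produces a sequence $\bigl((Y_n, \rho_n)\bigr)_{n \geq 0}$ in $\comodx$ together with $\Sigma_n$-equivariant structure maps $\sigma_n$ that are morphisms of comodules, i.e., a symmetric $S^1$-spectrum in $\comodx$. This assignment is obviously functorial.

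Conversely, given an object of $\sps(\comodx, S^1)$, the levelwise coactions $\rho_n$ assemble into a morphism of symmetric spectra $\rho : \bold Y \to \bold Y \wedge X_+ \cong \bold Y \wedge \sx$ whose coassociativity and counit follow from the same properties level by level. The two assignments are manifestly mutually inverse and natural, establishing the desired isomorphism of categories. The only mild obstacle is bookkeeping the compatibility of the isomorphism $\bold Y \wedge \sx \cong \bold Y \wedge X_+$ with the structure maps, diagonal, and augmentation of $\sx$; once this is done carefully, the verification of the axioms is purely formal.
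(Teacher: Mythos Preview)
Your proposal is correct and follows essentially the same approach as the paper: both arguments rest on the identification $\bold Y \wedge \sx \cong (Y_n \wedge X_+)_{n\geq 0}$ (the paper cites \cite[\S 2.2]{hss} for this), and then observe that the data of a coaction $\rho: \bold Y \to \bold Y \wedge \sx$ compatible with the spectrum structure maps is exactly a sequence of levelwise $X_+$-coactions $\rho_n$ for which the $\sigma_n$ are comodule morphisms. The paper's proof is terser---it writes out the two commutative squares and declares them visibly identical---whereas you spell out the coassociativity and counit verifications more explicitly, but the substance is the same.
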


\begin{proof}
An object of $\sps \big(\comodx, S^{1}\big)$, is a sequence
$$\big( (Y_{n}, \rho_{n}), (\sigma_{n})\big)_{n\geq 0},$$
where each $(Y_{n}, \rho_{n})$ is an $X_{+}$-comodule, equipped with a left $\Sigma_{n}$-action, and each $\sigma_{n}:(Y_{n}, \rho_{n})\otimes S^{1} \to (Y_{n+1}, \rho_{n+1})$ is a $\Sigma_{n}$-equivariant morphism of $X_{+}$-comodules, i.e., 
$$\xymatrix{Y_{n}\wedge S^{1} \ar [d]_{\rho_{n}\wedge S^{1}}\ar [r]^{\sigma _{n}}&Y_{n+1}\ar [dd]^{\rho_{n+1}}\\
Y_{n}\wedge X_{+}\wedge S^{1}\ar[d]_{Y_{n}\wedge \tau}\\
Y_{n}\wedge S^{1}\wedge X_{+}\ar [r]^{\sigma_{n}\wedge X_{+}}&Y_{n+1}\wedge S^{1}
}$$
commutes.

On the other hand, for any symmetric spectrum $\bold Y=(Y_{n}, \sigma_{n})_{n\geq 0}$, 
$$\bold Y\wedge \sx = \big(Y_{n}\wedge X_{+}, (\sigma_{n}\wedge X_{+})(Y_{n}\wedge \tau)\big), $$
by \cite[\S 2.2]{hss}.  An object of $\comodsx$ is a pair $(\bold Y,  \rho)$, where $\rho: \bold Y \to \bold Y \wedge \sx$ is a morphism of symmetric spectra, i.e., $\rho= (\rho_{n}: Y_{n}\to Y_{n} \wedge X_{+})_{n\geq 0} $, and 
$$\xymatrix{Y_{n}\wedge S^{1} \ar [d]_{\rho_{n}\wedge S^{1}}\ar [rr]^{\sigma _{n}}&&Y_{n+1}\ar [d]^{\rho_{n+1}}\\
Y_{n}\wedge X_{+}\wedge S^{1}\ar[r]^{Y_{n}\wedge \tau}&Y_{n}\wedge S^{1}\wedge X_{+}\ar [r]^{\sigma_{n}\wedge X_{+}}&Y_{n+1}\wedge X_{+}
}$$
commutes.  It is therefore clear that $\sx$-comodules in symmetric spectra are exactly symmetric spectra of $X_{+}$-comodules.
\end{proof}
}

{\begin{proof} [Proof of Theorem \ref{thm:stable}] 
Part (1): Since $(\comodx)_{\op E}$ is a left proper, combinatorial, simplicial model category in which the cofibrations are effective monomorphisms,  and all simplicial sets are cofibrant, we can construct the stabilization 
$$\spst \big((\comodx)_{\op E}, S^{1}\big),$$
which is a $\spst\big( (\sset)_{\op E}, S^{1}\big)$-model category by \cite[Theorem 8.11]{hovey-spectra}.
Moreover, by \cite[Theorem 9.3]{hovey-spectra}, the simplicial Quillen adjunction
$$\adjunct{(\comodx)_{\op E}}{(\sset)_{\op E}}{U}{-\wedge X_{+}}$$
induces a Quillen adjunction of $\spec$-model categories
$$\adjunct {\spst \big( (\comodx)_{\op E}, S^{1})}{\espec}{\quad \sps U}{\qquad\sps(-\wedge X_{+})},$$
where $\sps U$ applies $U$ levelwise, and similarly for $\sps  (-\wedge X_{+})$.  The isomorphism 
$$\comodsx\cong \sps \big( \comodx, S^{1})$$
from Proposition~\ref{prop:comod=sp} implies the existence of an induced model category structure, 
$$(\comodsx)_{\op E}^{\text{st}} \cong \spst \big( (\comodx)_{\op E}, S^{1}),$$ 
and a Quillen pair of $\spec$-categories
\begin{equation}\label{eqn:QE}
\adjunct{(\comodsx)_{\op E}^{\text{st}}}{\espec.}{\qquad U}{\qquad-\wedge \sx}
\end{equation}

Propositions \ref{prop:proj-left-ind} and \ref{prop:left.local}, and \cite[Lemma 2.25]{bhkkrs} together imply that that the adjunction $(U,-\wedge \sx)$ left-induces a $\spec$-model category structure $(\comodsx)_{\op E}^{\mathrm{left}}$   from $\espec$.    Observe next that $U$ sends any levelwise $\op E_{*}$-equivalence of $\sx$-comodules to a levelwise $\op E_{*}$-equivalence of spectra.  Let $\mathcal S_{\text{comod}}$ and $\mathcal S$ be the sets of morphisms by which we should localize in order to stabilize the $\op E$-projective model category structures on the categories of comodules and of spectra, respectively. Inspection of formula (\ref{eqn:s}) and of the formula for the set of cofibrant generators of $(\comodx)_{\op E}$ in the proof of Proposition \ref{prop:cof.gen.comod}  makes it clear that $U(\mathcal S_{\text{comod}})\subseteq \mathcal S$.  It follows that every weak equivalence in $(\comodsx)_{\op E}^{\text{st}}$ is sent by $U$ to a weak equivalence in $\espec$, so that we can indeed apply Lemma \ref{lem:univ-left}, i.e., there is  a sequence of Quillen pairs  
$$\xymatrix{(\comodsx)_{\op E}^{\text{st}}\ar @<1.18ex>[rr]^{\id}&\perp&(\comodsx)_{\op E}^{\text{left}}\ar @<1.18ex>[ll]^{\id}\ar @<1.18ex>[rr]^(0.7){U}&\perp&\espec\ar @<1.18ex>[ll]^(0.3){-\wedge \sx}.}$$

Part (2): An easy K\"unneth spectral sequence argument \cite{adams} implies that a $\pist$-equivalence of pointed simplicial sets is always an $\op E_{*}$-equivalence, independently of $\op E_{*}$.  Recall that  for any generalized homology theory $\op E_{*}$, the cofibrations of $(\comodx)_{\op E}$ are the morphisms whose underlying map of pointed simplicial sets is a monomorphism (Theorem \ref{thm:comodx}).  It follows that 
$$\id: (\comodx)_{\pist}\to (\comodx)_{\op E}$$
is left Quillen for all $\op E_{*}$ and therefore that 
\begin{equation}\label{eqn:st}
\id: (\comodsx)_{\pist}^{\text{st}}\to (\comodsx)_{\op E}^{\text{st}}
\end{equation}
is also left Quillen.  Similarly, as shown in the proof of Proposition \ref{prop:local.spec}, 
$$\id: \spec_{\pist}\to \espec$$ 
is left Quillen, and therefore 
\begin{equation}\label{eqn:left}
\id: (\comodsx)_{\pist}^{\text{left}}\to (\comodsx)_{\op E}^{\text{left}}
\end{equation}
is left Quillen as well.

Part (3): Next we consider $\HZ$-equivalences.
Proposition \ref{prop:local.spec} and Example \ref{ex-stable} together imply that 
$$\spec_{\HZ}=\spec_{\pist}=\spec.$$  This can be extended to comodules as follows.

It is not hard to see that $\HZ$-equivalences of pointed simplicial sets are $\pist$-equivalences.  Indeed, any simplicial map can be factored as a monomorphism followed by a weak equivalence, and every weak equivalence is a $\pist$-equivalence.  Moreover, if $\widetilde H_{*}Y=0$, then $\Sigma Y$ is homotopically trivial and therefore $\pist Y=0$. {By considering cofibers,} it follows that any monomorphism of simplicial sets that is an $\HZ$-equivalence is a $\pist$-equivalence and thus that all $\HZ$-equivalences are $\pist$-equivalences. 

The identity
$$\id: (\comodx)_{\HZ}\to (\comodx)_{\pist}$$
is therefore left Quillen, whence  
$$\id: (\comodsx)_{\HZ}^{\text{st}}\to (\comodsx)_{\pist}^{\text{st}}$$
is also left Quillen.   Since $\spec_{\HZ}=\spec_{\pist}$,
$$\id: (\comodsx)_{\HZ}^{\text{left}}\to (\comodsx)_{\pist}^{\text{left}}$$
is left Quillen as well.   It follows that 
$$(\comodsx)_{\HZ}^{\text{st}}= (\comodsx)_{\pist}^{\text{st}}\quad\text{and}\quad(\comodsx)_{\HZ}^{\text{left}}= (\comodsx)_{\pist}^{\text{left}},$$
given that (\ref{eqn:st}) and (\ref{eqn:left}) are left Quillen for $\op E=\HZ$ {or any other $\op E$ such that $\op E$-equivalences are $\pist$-equivalences.
}

Part (4): If $X$ is a simplicial (commutative) monoid, then $\sx$ is a  (commutative) symmetric ring spectrum, and the symmetric monoidal structure $(\spec, \wedge, \bold S)$ lifts to a (symmetric) monoidal structure $\big(\comodsx, \widetilde \wedge , (\bold S, \rho_{u})\big)$ (cf.~Lemma \ref{lem:comodx-monoid}), so that $U:\comodsx\to \spec$ is a strong (symmetric) monoidal functor.  Since $\espec$ is a monoidal model category that satisfies the monoid axiom  by \cite[5.1]{barnes-roitzheim} and  \cite[3.8]{barnes.split.monoidal}, Proposition \ref{prop:monoid-axiom} implies that {$\big((\comodsx)^{\text{left}}_{\op E}, \tilde \wedge , (\bold S, \rho_{u})\big)$} is a 
(symmetric) monoidal model category satisfying the monoid axiom. 
\end{proof}
}

{Twisted homology provides us with one more Quillen pair.

\begin{prop}\label{prop:stable:twisted}
There is a model category structure $(\comodsx)_{ \Hq}^{\mathrm{st}}$ given by stabilizing $(\comodx)_{\Hq}$ for which there exists a Quillen pair
$$\adjunct{(\comodsx)_{ \Hq}^{\mathrm{st}}} {(\comodsx)^{\mathrm{st}}_{\HZ}}{\id}{\id}.$$
\end{prop}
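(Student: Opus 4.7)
The plan is first to verify that $(\comodx)_{\Hq}$ satisfies all the hypotheses required by Hovey's stabilization machine (Definition \ref{defn:stabilize}) so that the left-hand side of the proposed Quillen pair is defined, and then to exploit the fact that the identity $\id:(\comodx)_{\Hq}\to (\comodx)_{\HZ}$ is left Quillen in order to prolong it to a Quillen pair between stabilizations via \cite[Theorem 9.3]{hovey-spectra}.

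First, I would observe that $(\comodx)_{\Hq}$ is left proper, cofibrantly generated, and simplicial by Theorem \ref{thm:twisted}. Combining this with the local presentability of $\comodx$ established in Lemma \ref{lem:comodx-bicomplete}, we see that $(\comodx)_{\Hq}$ is combinatorial. Moreover, the cofibrations in $(\comodx)_{\Hq}$ are the underlying monomorphisms of simplicial sets by Theorem \ref{thm:twisted}, and such monomorphisms are effective, exactly as noted in Remark \ref{rmk:stabilize-comod} for $(\comodx)_{\op E}$. Hovey's construction therefore yields a stable model category $\spst\big((\comodx)_{\Hq}, S^{1}\big)$, which I would define to be $(\comodsx)_{\Hq}^{\mathrm{st}}$ by means of the category isomorphism $\comodsx\cong\sps(\comodx,S^{1})$ of Proposition \ref{prop:comod=sp}.

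Next I would check that $\id:(\comodx)_{\Hq}\to (\comodx)_{\HZ}$ is left Quillen. The cofibrations of both structures are precisely the underlying monomorphisms of pointed simplicial sets (by Theorem \ref{thm:twisted} and Proposition \ref{prop:cof.gen.comod}), so the identity preserves cofibrations. By Lemma \ref{lem.Hq.HZ}, every $\Hq$-equivalence is an $\HZ$-equivalence, whence the identity also preserves acyclic cofibrations. As the identity is trivially a simplicial (i.e., $\sset$-) functor between simplicial model categories, \cite[Theorem 9.3]{hovey-spectra} implies that its levelwise prolongation, which under the isomorphism of Proposition \ref{prop:comod=sp} is again just the identity, is a left Quillen functor $(\comodsx)_{\Hq}^{\mathrm{st}}\to (\comodsx)_{\HZ}^{\mathrm{st}}$, producing the desired Quillen pair.

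The only real content is the verification that Hovey's hypotheses apply to $(\comodx)_{\Hq}$ and the recognition that Lemma \ref{lem.Hq.HZ} is strong enough to give the left Quillen comparison before stabilization; both points have been put in place in earlier sections, so I expect the argument to be short and mostly a matter of assembling the pieces correctly.
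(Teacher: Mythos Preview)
Your proposal is correct and follows essentially the same approach as the paper's own proof: verify that $(\comodx)_{\Hq}$ meets Hovey's stabilization hypotheses, use Lemma \ref{lem.Hq.HZ} together with the agreement of cofibrations to see that $\id:(\comodx)_{\Hq}\to (\comodx)_{\HZ}$ is left Quillen, and then apply \cite[Theorem 9.3]{hovey-spectra} to stabilize. The paper's version is slightly terser but invokes the same ingredients in the same order.
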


\begin{proof}
Since $(\comodx)_{\Hq}$ is a left proper, combinatorial, simplicial model category in which the cofibrations are effective monomorphisms, 
$$\spst \big((\comodx)_{\Hq}, S^{1}\big) \iso (\comodsx)_{\text{st}, \Hq}$$
 is a $\spst\big( (\sset)_{\Hq}, S^{1}\big)$-model category by \cite[Theorem 8.11]{hovey-spectra}.

Because every $\Hq$-equivalence is an $\HZ$-equivalence (Lemma \ref{lem.Hq.HZ}), and the cofibrations in both model category structures are the same (Theorems \ref{thm:main} and \ref{thm:twisted}), the identity
$$\id: (\comodx)_{\Hq}\to (\comodx)_{\HZ}$$
is left Quillen for all $\op E_{*}$ and therefore 
$$(\comodsx)_{\Hq}^{\text{st}}\xrightarrow\id (\comodsx)_{\HZ}^{\text{st}}$$
is also left Quillen. 
\end{proof}}

{Next we turn to proving our stabilized Koszul duality statement.}

{\begin{proof}[Proof of Theorem \ref{thm:koszul-stable}]  Let $\op E_{*}$ be any generalized reduced homology theory.  By \cite[Theorem 9.3]{hovey-spectra} and Theorem \ref{thm:stable}, {since $-\wedge_{(\G X)_{+}}(\P X)_{+}$ is clearly a simplicial functor}, the Quillen equivalence
$$\adjunct{(\modgx)_{\op E}}{(\comodx)_{\op E}}{-\wedge_{(\G X)_{+}}(\P X)_{+}}{}$$
of Theorem \ref{thm:koszul} induces a Quillen equivalence
$$\adjunct{(\modgsx)_{\op E}^{\mathrm{st}}\cong\spst\big((\modgx)_{\op E}, S^{1}\big)}{(\comodsx)_{\op E}^{\mathrm{st}}}{}{}.$$
\end{proof}}

Finally, we show that the category of $\sh$-comodule algebras admits the desired model category structure, when $H$ is any simplicial monoid.

\begin{proof} [Proof of Corollary \ref{cor:comod-alg}] Let $\eta: \bold S \to \sh$ denote the unit map of the ring spectrum $\sh$. Since the monoid axiom holds in  $(\cat{Comod}_{\sh})_{\op E}^{\mathrm{left}}$ by Theorem \ref{thm:stable},  we can apply \cite[Theorem 1.3]{muro} to the left Quillen functor (cf.~Remark \ref{rmk:pushforward-stable})
$$\eta_{*}:\espec =(\comod_{S})_{\op E}^{\mathrm{left}}\to (\comodsh)_{\op E}^{\mathrm{left}},$$
which is easily seen to be strongly braided monoidal and central since $\eta$ is the unit, and to the associative operad in $\spec$, concluding that $\algh$ admits a cofibrantly generated model category structure right-induced by the adjunction
$$\adjunct {(\cat{Comod}_{\sh})_{\op E}^{\mathrm{left}}}{(\algh)_{\op E}}{\ \ T}{\ \ U},$$
where $T$ denotes the free associative monoid functor.

Let $\widetilde \I$ and $\widetilde \J$ denote the sets of generating cofibrations and generating acyclic cofibrations, respectively, of $(\cat{Comod}_{\sh})_{\op E}^{\mathrm{left}}$. We show now that the cofree/forgetful adjunction
$$\adjunct{(\algh)_{\op E}}{(\alg)_{\op E}}{\ \ \ U}{\qquad -\wedge \sh}$$
is also a Quillen pair, where $(\alg)_{\op E}$ is  the model category structure right-induced from $\espec$ by the adjunction 
$$\adjunct {\spec}{\alg}{T}{U}.$$
Recall  that $T(\widetilde \I)$ and $T(\widetilde \J)$ generate the cofibrations and the acyclic cofibrations of $(\algh)_{\op E}$.  It therefore suffices to show that the elements of $UT(\widetilde \I)$ and $UT(\widetilde \J)$ are cofibrations and acyclic cofibrations, respectively, in {$(\alg)_{\op E}$}.  This is obvious, however, since the diagram {
$$\xymatrix {(\cat{Comod}_{\sh})_{\op E}^{\mathrm{left}} \ar[r]^{T}\ar [d]_{U}&(\algh)_{\op E}\ar[d]^{{U}}\\ \espec \ar[r]^{T}&(\alg)_{\op E}}$$
commutes, and $U:(\cat{Comod}_{\sh})_{\op E}^{\mathrm{left}}\to \espec$ and $T:\espec \to (\alg)_{\op E}$ are both left Quillen.}
\end{proof}

\appendix

\section{Left-induced model category structures}\label{appendix}

In this section we provide a brief overview of left-induced model category structures, as developed in \cite{bhkkrs}.  We also prove that left-induced model structures behave well with respect to both left Bousfield localization and monoidal structure, which is useful to us in Section \ref{sec:stabilization}.

\begin{notn} Let $f$ and $g$ be morphisms in a category $\C$. If for every commutative diagram in $\C$
$$\xymatrix{ \cdot \ar[d]_f \ar[r]^{{a}} & \cdot \ar[d]^{g} \\ \cdot \ar[r]_{{b}} \ar@{-->}[ur]_{c} & \cdot}$$
the dotted lift $c$ exists, i.e., $gc=b$ and $cf=a$, then we write $f\boxslash g$.

If $\X$ is a class of morphisms in a category $\C$, then
$$\llp{\X}=\{ f \in \mor C\mid f\boxslash x\quad \forall x\in \X\},$$
and
$$\rlp{\X}=\{ f \in \mor C\mid x\boxslash f\quad \forall x\in \X\}.$$
\end{notn}

\begin{defn}\label{defn:left-ind} Let
$\adjunct{\C}{\M}{U}{F}$
be an adjoint pair of functors, where $(\M, \Fib, \Cof, \WE)$ is a model category, and $\C$ is a bicomplete category.  If the triple of classes of morphisms in $\C$ 
$$\Big(\rlp {\big(U^{-1}(\Cof \cap\WE)\big)} , U^{-1}(\Cof), U^{-1}(\WE)\Big)$$
satisfies the axioms of a model category, then it is a \emph{left-induced model structure} on $\C$.
\end{defn}

\begin{rmk} If $\C$ admits a model structure left-induced from that of $\M$ via an adjunction as in the definition above, then $U \dashv F$ is a Quillen pair with respect to the left-induced model structure on $\C$ and the given model structure on $\M$.
\end{rmk}

The theorem below is a special case of  \cite[Theorem 2.21]{bhkkrs}, with as essential input \cite[Theorem 3.2]{makkai-rosicky}.  It follows by an easy adjunction argument from \cite[Theorem 2.21]{bhkkrs}, taking the class of morphisms labelled there as $\Z$ to be the class of acyclic fibrations in $\M$.  

Recall that a cofibrantly generated model structure on a locally presentable category $\M$ is called {\em combinatorial.}  

\begin{thm}\label{thm:mr} Let
$\adjunct{\C}{\M}{U}{F}$
be an adjoint pair of functors, where $\C$ is a locally presentable category, and $(\M, \Fib, \Cof, \WE)$ is a combinatorial model category.  If  
$$\big(U^{-1}\Cof\big)\llp{} \subset U^{-1}\WE,$$ then the left-induced model structure on $\C$ exists and is cofibrantly generated.
\end{thm}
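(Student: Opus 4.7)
The plan is to deduce the theorem by direct specialization of \cite[Theorem 2.21]{bhkkrs}, choosing the auxiliary class $\Z$ there to be the class $\Fib\cap\WE$ of acyclic fibrations of $\M$. Recall that the substantive content of Theorem 2.21 is carried by \cite[Theorem 3.2]{makkai-rosicky}: given that $\C$ is locally presentable and $U\dashv F$ with $U$ accessible (automatic for a left adjoint between locally presentable categories, since $\M$ is locally presentable as a combinatorial model category), the preimage classes $U^{-1}\Cof$ and $U^{-1}(\Cof\cap\WE)$ are accessibly embedded in the arrow category $\C^{\to}$ and closed under pushouts and transfinite composition, so Makkai--Rosick\'y supplies sets $\I'$ and $\J'$ of morphisms in $\C$ with $\rlp{\I'}=\rlp{(U^{-1}\Cof)}$ and $\rlp{\J'}=\rlp{(U^{-1}(\Cof\cap\WE))}$. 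This yields, via Quillen's small object argument, the two factorization systems needed for a combinatorial model structure.

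With this black box in hand, what remains is to verify that the hypothesis of \cite[Theorem 2.21]{bhkkrs} reduces to the stated inclusion $(U^{-1}\Cof)\llp{}\subset U^{-1}\WE$ when one takes $\Z=\Fib\cap\WE$. This is a purely formal unraveling: morphisms $f$ of $\C$ with $f\in \rlp{(U^{-1}\Cof)}$ are precisely the ``candidate acyclic fibrations'' of the putative left-induced structure, while $U^{-1}\WE$ is the prescribed class of weak equivalences. The containment therefore expresses exactly the acyclicity axiom ``trivial fibrations are weak equivalences,'' which together with the factorizations produced above implies the remaining non-formal lifting axiom by the usual retract argument: any morphism in $U^{-1}\Cof\cap U^{-1}\WE$ is a retract of a map in the saturation of $\J'$, hence lifts against every morphism in $\rlp{(U^{-1}(\Cof\cap\WE))}$.

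The two-out-of-three property for $U^{-1}\WE$ and the closure of $U^{-1}\WE$, $U^{-1}\Cof$, and $\rlp{(U^{-1}(\Cof\cap\WE))}$ under retracts follow tautologically from the corresponding properties in $\M$, together with the fact that a right lifting class is always closed under retracts. Bicompleteness of $\C$ is part of local presentability. Hence the triple $\bigl(\rlp{(U^{-1}(\Cof\cap\WE))},\,U^{-1}\Cof,\,U^{-1}\WE\bigr)$ satisfies the axioms of a cofibrantly generated model category on $\C$, which is the desired left-induced structure.

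The main obstacle, absorbed into Theorem 2.21 and thus not repeated here, is the production of generating sets for the left-induced weak factorization systems: unlike the right-induction case, we do not have at hand images of generating (acyclic) cofibrations of $\M$, and one must instead invoke the accessibility machinery of Makkai--Rosick\'y to manufacture them from the pullback descriptions of $U^{-1}\Cof$ and $U^{-1}(\Cof\cap\WE)$.
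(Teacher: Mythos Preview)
Your proposal is correct and follows essentially the same route as the paper: the paper's proof is nothing more than the remark that the theorem is a special case of \cite[Theorem 2.21]{bhkkrs} (with essential input \cite[Theorem 3.2]{makkai-rosicky}), obtained by taking the auxiliary class $\Z$ there to be the acyclic fibrations of $\M$. You have simply unpacked that citation in more detail than the paper does.
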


{In \cite[Theorem 2.2.1]{hkrs}, the authors combined the 2-of-6 property with the dual of the Quillen Path Object Argument \cite{quillen} to provide easily checked conditions under which the acyclicity condition in the theorem above holds.  We state here a slightly weaker version of that
theorem, to avoid introducing category-theoretic complexity that is not necessary in this paper.

\begin{thm}\label{thm:quillen-path} Consider an adjunction between locally presentable categories
\[ \xymatrix@C=4pc{ \C \ar@<1ex>[r]^V \ar@{}[r]|\perp & \M, \ar@<1ex>[l]^K}\]
where $\M$ is a cofibrantly generated model category, and $V$ creates weak equivalences and cofibrations in $\C$.
If $\C$ admits \begin{enumerate}
\item cofibrant replacements $\epsilon_X \colon QX \xrightarrow{\sim} X$ for all objects $X$ and for each morphism $f\colon X \to Y$ there exists a morphism $Qf\colon QX \to QY$ satisfying $\epsilon _{Y}\circ Qf =f\circ \epsilon_{X}$, and
\item good cylinder objects $QX \coprod QX \rightarrowtail \mathrm{Cyl}(QX) \xrightarrow{\sim} QX$ for all $X$, 
\end{enumerate}
then the acyclicity condition  of  Theorem \ref{thm:mr} holds  and thus the left-induced model structure on $\C$ exists.
\end{thm}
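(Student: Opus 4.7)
The plan is to verify the acyclicity condition of Theorem~\ref{thm:mr}, namely $\rlp{V^{-1}\Cof} \subseteq V^{-1}\WE$, via a dual of Quillen's path object argument that uses cylinder objects in place of path objects, as appropriate for the left-induced setting. The strategy, given $p\colon X \to Y$ in $\rlp{V^{-1}\Cof}$, is to produce a section of $p$ up to weak equivalence, then a cylinder homotopy making the opposite composite a weak equivalence, and finally close the argument using the $2$-of-$6$ property of weak equivalences in $\M$.

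The first step is to construct $s\colon QY \to X$ with $ps = \epsilon_Y$. Through the creation functor $V$, hypothesis~(1) asserts that $V(QY)$ is cofibrant in $\M$; since $V$ is a left adjoint, $V(\emptyset_\C) = \emptyset_\M$, so the map $\emptyset \to QY$ lies in $V^{-1}\Cof$. The right lifting property of $p$ against this cofibration, applied to $\epsilon_Y$, supplies $s$. The second step is to build a cylinder homotopy between $sQp$ and $\epsilon_X$ as maps $QX \to X$. Using $\epsilon_Y Qp = p\epsilon_X$ from~(1), the square
$$\xymatrix{QX \coprod QX \ar[rr]^{[sQp,\,\epsilon_X]} \ar@{>->}[d] && X \ar[d]^{p}\\ \mathrm{Cyl}(QX) \ar[rr]^{p\epsilon_X \sigma} \ar@{-->}[urr]^{H} && Y}$$
commutes, where $\sigma\colon \mathrm{Cyl}(QX) \xrightarrow{\sim} QX$ is the weak-equivalence projection of the good cylinder in~(2). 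The left vertical is a cofibration in $\C$, so the lifting property of $p$ furnishes $H$.

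Applying $V$, which preserves colimits, cofibrations, and weak equivalences, yields a good cylinder object $V(\mathrm{Cyl}(QX))$ on the cofibrant $V(QX)$ in $\M$, and $V(H)$ is a left homotopy between $V(sQp)$ and $V(\epsilon_X)$. Since left-homotopic maps out of a cofibrant object are simultaneously weak equivalences or not, $V(sQp) = V(s)V(Qp)$ is a weak equivalence. Combined with $V(p)V(s) = V(\epsilon_Y) \in \WE$, the $2$-of-$6$ property applied to the composable triple $V(Qp),\ V(s),\ V(p)$ implies that all three, and in particular $V(p)$, are weak equivalences. Hence $p \in V^{-1}\WE$, the acyclicity condition holds, and Theorem~\ref{thm:mr} produces the left-induced model structure on $\C$.

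The principal subtlety to navigate is that the phrases ``cofibrant replacement'' and ``good cylinder'' in~(1)--(2) must be parsed through $V$ before the model structure on $\C$ is available: $QX$ is ``cofibrant'' in the sense that $V(QX)$ is cofibrant in $\M$, and the structural maps of $\mathrm{Cyl}(QX)$ are cofibrations and weak equivalences only after applying $V$. With that dictionary in place, the argument is a formal dualization of Quillen's classical path-object trick, and the use of $2$-of-$6$ (rather than $2$-of-$3$) is what sidesteps the circularity that would otherwise arise from not having a direct homotopy inverse to $p$.
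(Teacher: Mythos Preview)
Your proof is correct and follows precisely the approach the paper indicates: the paper does not give its own proof of this theorem but cites \cite[Theorem 2.2.1]{hkrs}, describing the argument there as combining the $2$-of-$6$ property with the dual of Quillen's path object argument, which is exactly what you have carried out. Your explicit remark that the terms ``cofibrant replacement'' and ``good cylinder'' must be interpreted via $V$ before the model structure on $\C$ exists is a helpful clarification of what the statement means.
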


Note that condition (1) above holds trivially if all objects in $\M$ are cofibrant.}

Left-induced model category structures satisfy the following sort of universal property.

{\begin{lem}\label{lem:univ-left}  Let $\adjunct{\cat N}{\M}{U}{F}$ be a Quillen adjunction between two model categories such that $U$ sends all weak equivalences in $\cat N$ to weak equivalences in $\cat M$.  If the adjunction $U\dashv F$ left-induces a model category stucture $\cat N_{\mathrm{left}}$, then there is a sequence of Quillen pairs 
$$\xymatrix{{\cat N}\ar @<1.18ex>[rr]^{\id}&\perp&\cat N_{\mathrm{left}}\ar @<1.18ex>[ll]^{\id}\ar @<1.18ex>[rr]^(0.5){U}&\perp&\M\ar @<1.18ex>[ll]^(0.5){F}.}$$
\end{lem}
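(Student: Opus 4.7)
The plan is to verify each of the two Quillen pairs in the displayed composite separately, with both checks reducing to unpacking the definition of the left-induced structure.

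First I would establish the right-hand Quillen pair $U \dashv F : \cat N_{\mathrm{left}} \to \M$. By Definition \ref{defn:left-ind}, the cofibrations of $\cat N_{\mathrm{left}}$ are exactly $U^{-1}(\Cof_\M)$ and the weak equivalences are exactly $U^{-1}(\WE_\M)$. Hence $U$ tautologically sends (acyclic) cofibrations in $\cat N_{\mathrm{left}}$ to (acyclic) cofibrations in $\M$, so $U$ is left Quillen with respect to the left-induced structure. No hypothesis on $U$ is even used at this step; only the definition matters.

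Next I would address the left-hand Quillen pair $\id \dashv \id : \cat N \to \cat N_{\mathrm{left}}$. It suffices to show that the identity sends cofibrations of $\cat N$ to cofibrations of $\cat N_{\mathrm{left}}$ and weak equivalences of $\cat N$ to weak equivalences of $\cat N_{\mathrm{left}}$; preservation of acyclic cofibrations then follows automatically. For cofibrations, if $f$ is a cofibration in $\cat N$, then $U(f)$ is a cofibration in $\M$ because $U \dashv F$ is a Quillen adjunction with respect to the given model structure on $\cat N$; so $f \in U^{-1}(\Cof_\M)$, which is exactly the condition for $f$ to be a cofibration in $\cat N_{\mathrm{left}}$. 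For weak equivalences, the hypothesis that $U$ sends all weak equivalences of $\cat N$ to weak equivalences of $\M$ gives immediately that $f \in \WE_{\cat N}$ implies $U(f) \in \WE_\M$, hence $f$ is a weak equivalence in $\cat N_{\mathrm{left}}$.

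There is essentially no obstacle in this argument: both statements are formal consequences of the definition of the left-induced structure together with the two assumptions on $U$. The content of the lemma is not any deep computation but rather the observation that any Quillen pair $U \dashv F$ satisfying those hypotheses factors canonically through the left-induced structure, which is precisely what is needed in the proof of Proposition~\ref{prop:proj-left-ind} to compare the stabilized and left-induced structures on $\comodsx$.
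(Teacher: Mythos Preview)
Your proof is correct and follows essentially the same approach as the paper's: both verify that $\id:\cat N\to \cat N_{\mathrm{left}}$ is left Quillen by noting that $U$ left Quillen gives $\Cof_{\cat N}\subset U^{-1}(\Cof_{\M})=\Cof_{\mathrm{left}}$, and the hypothesis on $U$ gives $\WE_{\cat N}\subset U^{-1}(\WE_{\M})=\WE_{\mathrm{left}}$. The paper omits discussion of the right-hand pair $U\dashv F$ since that is immediate from the definition of the left-induced structure (as noted in the remark following Definition~\ref{defn:left-ind}); your explicit treatment of it is a harmless elaboration.
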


\begin{proof}  Let $\Cof_{\M}$, $\Cof_{\cat N}$, and $\Cof_{\text{left}}$ denote the three classes of cofibrations under consideration.  If $f\in \Cof_{\cat N}$, then $Uf\in \Cof_{\M}$, since $U$ is left Quillen, and therefore $f\in U^{-1}(\Cof_{\M})=\Cof_{\text{left}}$.  A similar argument shows that a weak equivalence in the original model category structure on $\cat N$ is also a weak equivalence in the left-induced model category structure, since $U$ preserves weak equivalences.  It follows that $\id: \cat N \to \cat N_{\text{left}}$ is a left Quillen functor.
\end{proof}}

Next we consider the interaction between left-induced model category structures and left Bousfield localization.  Given a combinatorial model structure on $\M$, denote the left localized model category structure on $\M$ with respect to a set  $\cS$ of morphisms   by $(L_{\cS}\M, \Fib_{\cS}, \Cof_{\cS}, \WE_{\cS})$~\cite{hirschhorn, lurie}. By definition, the cofibrations in $L_{\cS}\M$ agree with the cofibrations in $\M$, i.e., $\Cof_{\cS}= \Cof$, and the class of weak equivalences in $L_{\cS}\M$ contains both $\cS$ and the class of weak equivalences in $\M$, i.e.,  $\WE \cup \cS \subset \WE_{\cS}$.

\begin{prop}\label{prop:left.local}
Let
$\adjunct{\C}{\M}{U}{F}$ be an adjoint pair of functors between combinatorial model categories, where the model structure on $\C$ is left-induced from $\M$ via $U$.  For any set of morphisms $\cS$ in $\M$,  there is a model structure on $\C$ that is left-induced from $L_{\cS}M$ via $U$.
\end{prop}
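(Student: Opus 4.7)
The plan is to apply Theorem \ref{thm:mr} directly to the adjunction $U \dashv F$ reinterpreted as going into $L_{\cS}\M$ rather than $\M$. First I would verify the standing hypotheses: $L_{\cS}\M$ is again combinatorial because left Bousfield localization of a combinatorial model category at a set of morphisms is combinatorial (a standard fact, e.g., from \cite{lurie}); and $\C$ is locally presentable, which we already know since by hypothesis it carries the combinatorial model structure left-induced from $\M$.

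The key step is to exploit the defining property of left Bousfield localization: $\Cof_{\cS} = \Cof$ and $\WE \subseteq \WE_{\cS}$. Pulling back along $U$ preserves both inclusions, so
\[
U^{-1}(\Cof_{\cS}) \;=\; U^{-1}(\Cof) \qquad\text{and}\qquad U^{-1}(\WE) \;\subseteq\; U^{-1}(\WE_{\cS}).
\]
Since the left-induced model structure from $\M$ is assumed to exist, the acyclicity condition of Theorem \ref{thm:mr} holds in the unlocalized setting, namely $\bigl(U^{-1}(\Cof)\bigr)^{\boxslash} \subseteq U^{-1}(\WE)$. Chaining these observations gives
\[
\bigl(U^{-1}(\Cof_{\cS})\bigr)^{\boxslash} \;=\; \bigl(U^{-1}(\Cof)\bigr)^{\boxslash} \;\subseteq\; U^{-1}(\WE) \;\subseteq\; U^{-1}(\WE_{\cS}),
\]
which is precisely the acyclicity condition required by Theorem \ref{thm:mr} for the adjunction $U \dashv F$ regarded as landing in $L_{\cS}\M$. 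Theorem \ref{thm:mr} then produces the desired cofibrantly generated left-induced model structure on $\C$.

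Honestly, there is no substantive obstacle to overcome: the proof is essentially a one-line verification of the acyclicity condition, resting entirely on the elementary observation that left Bousfield localization enlarges the class of weak equivalences while leaving the class of cofibrations fixed. The only point that warrants any care is confirming that the target $L_{\cS}\M$ remains combinatorial (so that Theorem \ref{thm:mr} applies), but this is classical. I would present the argument as a short paragraph, appealing to Theorem \ref{thm:mr} with the acyclicity inclusion displayed as above.
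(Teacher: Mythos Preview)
Your proposal is correct and follows essentially the same approach as the paper: apply Theorem \ref{thm:mr} to the adjunction landing in $L_{\cS}\M$, and verify the acyclicity condition via the chain $\big(U^{-1}\Cof_{\cS}\big)^{\boxslash} = \big(U^{-1}\Cof\big)^{\boxslash} \subset U^{-1}\WE \subset U^{-1}\WE_{\cS}$, using that localization keeps the cofibrations and enlarges the weak equivalences. The paper's version is terser and leaves implicit the combinatoriality of $L_{\cS}\M$, but there is no substantive difference.
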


\begin{proof}
Apply Theorem \ref{thm:mr} to the adjunction $$\adjunct{\C}{L_{\cS}\M}{U}{F}.$$  
Since $\Cof_{\cS} = \Cof$, $\WE \subset \WE_{\cS} $,  and $\big(U^{-1}\Cof\big)\llp{} \subset U^{-1}\WE$, it follows that 
$$\big(U^{-1}\Cof_{\cS}\big)\llp{} \subset U^{-1}\WE_{\cS}.$$
\end{proof}

Finally we show that left-induction interacts well with monoidal structures, in the sense of the following definition, {which is a slight variant of the definition in \cite{schwede-shipley}, in that we do not require the monoidal structure to be symmetric.} 

\begin{defn}\label{defn:monoidal-model} A model category $\mathsf M$ that is also endowed with the structure of a closed monoidal category $(\mathsf M, \otimes ,I)$ is a \emph{monoidal model category} if the axioms below hold.
\begin{enumerate}
\item For all cofibrations $i:A\to X$, $j:B\to Y$, the induced map 
$${i \widehat\otimes j}: (A\otimes Y)\coprod_{A\otimes B} (X\otimes B) \to X\otimes Y$$ 
is a cofibration, which is a weak equivalence if $i$ or $j$ is.
\item If $I^{c}\to I$ is a cofibrant replacement for the unit $I$, then 
$$I^{c}\otimes X \to I\otimes X\cong X$$ 
is a weak equivalence for all cofibrant $X$.
\end{enumerate}
\end{defn}

The importance of this definition resides in the fact that the homotopy category of a (symmetric) monoidal model category inherits a natural {(symmetric)} monoidal structure \cite[4.3.2]{hovey}.

Algebraic structures in monoidal model categories behave particularly well homotopically when the following axiom, {a nonsymmetric version of the monoid axiom from \cite{schwede-shipley}},  holds as well.  Recall that for any class $\op X$ of maps in a category $\M$, the class $\op X\text{-cell}$ consists of morphisms built up by transfinite composition of sequences of morphisms obtained by pushing out morphisms in $\op X$ along arbitrary morphisms in $\M$.

{\begin{defn} \label{defn:monoid-axiom}  \cite[Definition 9.1]{muro}  A monoidal model category $(\M, \otimes, I)$ \emph{satisfies the monoid axiom} if 
$$\mathsf K_{\M}\text{-cell} \subset \WE,$$
where
$$\mathsf K_{\M}=\big\{f_{1}\hotimes \cdots \hotimes f_{n}\mid \exists\, i \text{ s.t. } f_{i}\in \Cof \cap \WE \text{ and }  f_{j}\not\in \Cof \cap \WE\Rightarrow \exists\, X_{j}\in \M \text{ s.t. } f_{j}:\emptyset \to X_{j}\big\}.$$
Here $\hotimes$ denotes the pushout product.
\end{defn}

When $(\M, \otimes, I)$ is a symmetric monoidal category, the axiom above is equivalent to the monoid axiom in \cite{schwede-shipley}. }

\begin{prop}\label{prop:monoid-axiom} Let $(\M, \otimes, I)$ be a monoidal model category. Let
$$\adjunct{\C}{\M}{U}{F}$$ 
be an adjoint pair of functors between combinatorial model categories, where the model structure on $\C$ is left-induced from $\M$ via $U$.  If there is a {(symmetric)} monoidal structure $(\C, \boxtimes, J)$ with respect to which $U$ is strong {(symmetric)} monoidal, then $(\C, \boxtimes, J)$ is a (symmetric) monoidal model category with respect to the the left-induced model category structure, and satisfies the monoid axiom if $(\M, \otimes, I)$ does.
\end{prop}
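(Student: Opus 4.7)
The strategy rests on three properties of $U$ that, combined, make the verification almost formal: $U$ preserves all colimits (it is a left adjoint, with right adjoint $F$), $U$ is strong monoidal, and cofibrations and weak equivalences in $\C$ are created by $U$ (this is the definition of left-induction). Together, these imply that $U$ commutes with the pushout-product construction up to canonical isomorphism: for any pair of morphisms $f,g$ in $\C$, one has $U(f\mathbin{\widehat\boxtimes} g)\cong Uf\mathbin{\widehat\otimes} Ug$, since $U$ preserves both the pushout and the tensor that enter its definition. This compatibility is the engine of the proof.

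To check the pushout-product axiom for $(\C,\boxtimes,J)$, I would take cofibrations $i,j$ in $\C$, observe that $Ui,Uj$ are cofibrations in $\M$, apply the pushout-product axiom in $\M$ to get that $Ui\mathbin{\widehat\otimes}Uj\cong U(i\mathbin{\widehat\boxtimes}j)$ is a cofibration (acyclic if $i$ or $j$ is), and conclude via left-induction. For the unit axiom, pick a cofibrant replacement $q\colon J^c\xrightarrow{\sim}J$ in $\C$. Because $U$ is a left adjoint it preserves initial objects, and because it creates cofibrations, $UJ^c$ is cofibrant in $\M$; strong monoidality supplies an isomorphism $UJ\cong I$, so $Uq\colon UJ^c\xrightarrow{\sim}I$ is a cofibrant replacement of the unit in $\M$. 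For any cofibrant $X$ in $\C$, $UX$ is cofibrant in $\M$, so the unit axiom in $\M$ applied to $Uq\otimes UX\cong U(q\boxtimes X)$ produces a weak equivalence in $\M$, which pulls back to a weak equivalence $q\boxtimes X$ in $\C$ by the defining property of the left-induced structure.

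Finally, for the monoid axiom, the compatibility $U(f_1\mathbin{\widehat\boxtimes}\cdots\mathbin{\widehat\boxtimes}f_n)\cong Uf_1\mathbin{\widehat\otimes}\cdots\mathbin{\widehat\otimes}Uf_n$, together with the facts that $U$ sends acyclic cofibrations to acyclic cofibrations and preserves initial objects, shows that $U(\mathsf{K}_\C)\subseteq\mathsf{K}_\M$. Because $U$ preserves pushouts and transfinite compositions, it follows that $U$ carries $\mathsf{K}_\C$-cell into $\mathsf{K}_\M$-cell, which lies in $\WE_\M$ by the monoid axiom for $\M$. Hence $\mathsf{K}_\C$-cell $\subseteq U^{-1}(\WE_\M)=\WE_\C$, as required.

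There is no serious obstacle here; the argument is essentially a bookkeeping exercise in the slogan ``a strong monoidal left adjoint transports everything.'' The most delicate point to articulate carefully is the unit axiom, where one must use both halves of the left-induction hypothesis (creation of cofibrations to see $UJ^c$ is cofibrant in $\M$, creation of weak equivalences to conclude back in $\C$) and the fact that a left adjoint preserves the initial object, so that $U$ indeed sends a cofibrant replacement of $J$ to a cofibrant replacement of $I$.
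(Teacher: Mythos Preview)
Your proposal is correct and follows essentially the same route as the paper: both arguments use that $U$, being a strong monoidal left adjoint, commutes with the pushout-product construction, then verify the pushout-product axiom, unit axiom, and monoid axiom in $\C$ by pushing each question forward to $\M$ and invoking left-induction to conclude. Your write-up is in fact slightly more explicit in places (e.g., noting that $U$ preserves the initial object so that $UJ^c$ is genuinely cofibrant, and spelling out the iterated pushout-product compatibility for $\mathsf{K}_\C$), but there is no substantive difference in strategy.
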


\begin{proof} Let $i:A\to X$ and $j:B\to Y$ be cofibrations in the left-induced model category structure on  $\C$.  Since $U$ is left Quillen, both $Ui$ and $Uj$ are cofibrations in $\M$, whence 
$$U\Big((A\boxtimes Y)\coprod_{A\boxtimes B} (X\boxtimes B)\Big)\cong(UA\otimes UY)\coprod_{UA\otimes UB} (UX\otimes UB) \to UX\otimes UY\cong U(X\boxtimes Y)$$ 
is a cofibration in $\M$, which is a weak equivalence if $Ui$ or $Uj$ is, since $(\M, \otimes , I)$ is a monoidal model category. Note that the isomorphisms above follow from the fact that $U$ commutes with colimits and is strong monoidal.  By definition of the left-induced model structure, we conclude that 
$$i \widehat \boxtimes j: (A\boxtimes Y)\coprod_{A\boxtimes B} (X\boxtimes B)\to X\boxtimes Y$$
is a cofibration, which is a weak equivalence if $i$ or $j$ is.

Now let $J^{c}\xrightarrow \simeq J$ be a cofibrant replacement of the unit in $\C$.  Then 
$$U(J^{c})\xrightarrow \simeq UJ \cong I$$
is a cofibrant replacement in $\M$, since $U$ is left Quillen and strong monoidal.  It follows that for any cofibrant object $C$ in $\C$,
$$U(J^{c}\boxtimes C) \cong U(J^{c})\otimes U(C) \xrightarrow \simeq I \otimes U(C) \cong U(C)$$
is a weak equivalence in $\M$, since $(\M, \otimes, I)$ is a monoidal model category.  By definition of the left-induced model category structure, we conclude that
$$J^{c}\boxtimes C\xrightarrow \simeq C,$$
is a weak equivalence in $\C$ and thus that $(\C, \boxtimes, J)$ is indeed a monoidal model category.

Suppose finally that the monoid axiom holds in $(\M, \otimes, I)$. {Because $U$ preserves pushouts and compositions of sequences, as well as acyclic cofibrations since it is left Quillen, it follows that 
$$U\Big( \mathsf K_{\C}\text{-cell}\Big)\subset \mathsf K_{\M}\text{-cell}\subset \WE_{\M},$$
and so, by definition of the left-induced model category structure
$$\mathsf K_{\C}\text{-cell}\subset \WE_{\C},$$
i.e., the monoid axiom holds in $(\C, \boxtimes, J)$.}
\end{proof}

 \bibliographystyle{amsplain}
\bibliography{retractive}

\providecommand{\bysame}{\leavevmode\hbox to3em{\hrulefill}\thinspace}
\providecommand{\MR}{\relax\ifhmode\unskip\space\fi MR }
\providecommand{\MRhref}[2]{%
  \href{http://www.ams.org/mathscinet-getitem?mr=#1}{#2}
}
\providecommand{\href}[2]{#2}
\begin{thebibliography}{10}

\bibitem{adamek-rosicky}
Ji{\v{r}}{\'{\i}} Ad{\'a}mek and Ji{\v{r}}{\'{\i}} Rosick{\'y}, \emph{Locally
  presentable and accessible categories}, London Mathematical Society Lecture
  Note Series, vol. 189, Cambridge University Press, Cambridge, 1994.
  \MR{1294136 (95j:18001)}

\bibitem{adams}
J.~F. Adams, \emph{Lectures on generalised cohomology}, Category {T}heory,
  {H}omology {T}heory and their {A}pplications, {III} ({B}attelle {I}nstitute
  {C}onference, {S}eattle, {W}ash., 1968, {V}ol. {T}hree), Springer, Berlin,
  1969, pp.~1--138. \MR{0251716 (40 \#4943)}

\bibitem{barnes.split.monoidal}
D.~Barnes, \emph{Splitting monoidal stable model categories}, J. Pure Appl.
  Algebra \textbf{213} (2009), no.~5, 846--856. \MR{2494375 (2009k:55034)}

\bibitem{barnes-roitzheim}
David Barnes and Constanze Roitzheim, \emph{Stable left and right {B}ousfield
  localisations}, Glasg. Math. J. \textbf{56} (2014), no.~1, 13--42.
  \MR{3137847}

\bibitem{bhkkrs}
Marzieh Bayeh, Kathryn Hess, Varvara Karpova, Magdalena Kedziorek, Emily Riehl,
  and Brooke Shipley, \emph{Left-induced model category structures on diagram
  categories}, Contemporary Mathematics \textbf{620} (2014), 71--98.

\bibitem{bgt}
Andrew~J. Blumberg, David Gepner, and Gon{\c{c}}alo Tabuada, \emph{A universal
  characterization of higher algebraic {$K$}-theory}, Geom. Topol. \textbf{17}
  (2013), no.~2, 733--838. \MR{3070515}

\bibitem{blumberg-mandell}
Andrew~J. Blumberg and Michael~A. Mandell, \emph{Derived {K}oszul duality and
  involutions in the algebraic {$K$}-theory of spaces}, J. Topol. \textbf{4}
  (2011), no.~2, 327--342. \MR{2805994 (2012g:19005)}

\bibitem{borceux}
Francis Borceux, \emph{Handbook of categorical algebra. 2}, Encyclopedia of
  Mathematics and its Applications, vol.~51, Cambridge University Press,
  Cambridge, 1994, Categories and structures. \MR{1313497 (96g:18001b)}

\bibitem{bousfield}
A.~K. Bousfield, \emph{The localization of spaces with respect to homology},
  Topology \textbf{14} (1975), 133--150. \MR{0380779 (52 \#1676)}

\bibitem{bousfield.class}
\bysame, \emph{The {B}oolean algebra of spectra}, Comment. Math. Helv.
  \textbf{54} (1979), no.~3, 368--377. \MR{543337 (81a:55015)}

\bibitem{ching-riehl}
Michael Ching and Emily Riehl, \emph{Coalgebraic models for combinatorial model
  categories}, Homology Homotopy Appl. \textbf{16} (2014), no.~2, 171--184.
  \MR{3257572}

\bibitem{dugger-shipley}
Daniel Dugger and Brooke Shipley, \emph{{$K$}-theory and derived equivalences},
  Duke Math. J. \textbf{124} (2004), no.~3, 587--617. \MR{2085176
  (2005e:19005)}

\bibitem{hatcher}
Allen Hatcher, \emph{Algebraic topology}, Cambridge University Press,
  Cambridge, 2002. \MR{1867354 (2002k:55001)}

\bibitem{hausmann-husemoller}
Jean-Claude Hausmann and Dale Husemoller, \emph{Acyclic maps}, Enseign. Math.
  (2) \textbf{25} (1979), no.~1-2, 53--75. \MR{543552 (80k:55044)}

\bibitem{hess:hhg}
Kathryn Hess, \emph{Homotopic {H}opf-{G}alois extensions: foundations and
  examples}, New topological contexts for {G}alois theory and algebraic
  geometry ({BIRS} 2008), Geom. Topol. Monogr., vol.~16, Geom. Topol. Publ.,
  Coventry, 2009, pp.~79--132. \MR{2544387 (2010j:55010)}

\bibitem{hkrs}
Kathryn Hess, Magdalena Kedziorek, Emily Riehl, and Brooke Shipley, \emph{A
  necessary and sufficient condition for induced model structures}, available
  on the ArXiv, 2015.

\bibitem{hh.over}
Philip~S. Hirschhorn, \emph{Overcategories and undercategories of model
  categories}, http://www-math.mit.edu/\~{}psh/undercat.pdf.

\bibitem{hirschhorn}
\bysame, \emph{Model categories and their localizations}, Mathematical Surveys
  and Monographs, vol.~99, American Mathematical Society, Providence, RI, 2003.
  \MR{MR1944041 (2003j:18018)}

\bibitem{hovey}
Mark Hovey, \emph{Model categories}, Mathematical Surveys and Monographs,
  vol.~63, American Mathematical Society, Providence, RI, 1999. \MR{1650134
  (99h:55031)}

\bibitem{hovey-spectra}
\bysame, \emph{Spectra and symmetric spectra in general model categories}, J.
  Pure Appl. Algebra \textbf{165} (2001), no.~1, 63--127. \MR{1860878
  (2002j:55006)}

\bibitem{hss}
Mark Hovey, Brooke Shipley, and Jeff Smith, \emph{Symmetric spectra}, J. Amer.
  Math. Soc. \textbf{13} (2000), no.~1, 149--208. \MR{1695653 (2000h:55016)}

\bibitem{lurie}
Jacob Lurie, \emph{Higher topos theory}, Annals of Mathematics Studies, vol.
  170, Princeton University Press, Princeton, NJ, 2009. \MR{2522659
  (2010j:18001)}

\bibitem{makkai-rosicky}
Michael Makkai and Ji{\v{r}}{\'{\i}} Rosick{\'y}, \emph{Cellular categories},
  J. Pure Appl. Algebra \textbf{218} (2014), 1652--1664.

\bibitem{may}
J.~Peter May, \emph{Simplicial objects in algebraic topology}, Van Nostrand
  Mathematical Studies, No. 11, D. Van Nostrand Co., Inc., Princeton,
  N.J.-Toronto, Ont.-London, 1967. \MR{0222892 (36 \#5942)}

\bibitem{muro}
Fernando Muro, \emph{Homotopy theory of nonsymmetric operads}, Algebr. Geom.
  Topol. \textbf{11} (2011), no.~3, 1541--1599. \MR{2821434 (2012k:18015)}

\bibitem{quillen}
Daniel Quillen, \emph{Rational homotopy theory}, Ann. of Math. (2) \textbf{90}
  (1969), 205--295. \MR{0258031 (41 \#2678)}

\bibitem{rognes}
John Rognes, \emph{Galois extensions of structured ring spectra. {S}tably
  dualizable groups}, Mem. Amer. Math. Soc. \textbf{192} (2008), no.~898,
  viii+137. \MR{2387923 (2009c:55007)}

\bibitem{schwede-stalgthy}
Stefan Schwede, \emph{Stable homotopy of algebraic theories}, Topology
  \textbf{40} (2001), no.~1, 1--41. \MR{1791267 (2002d:55026)}

\bibitem{schwede-shipley}
Stefan Schwede and Brooke~E. Shipley, \emph{Algebras and modules in monoidal
  model categories}, Proc. London Math. Soc. (3) \textbf{80} (2000), no.~2,
  491--511. \MR{1734325 (2001c:18006)}

\bibitem{shipley-thh}
Brooke Shipley, \emph{Symmetric spectra and topological {H}ochschild homology},
  $K$-Theory \textbf{19} (2000), no.~2, 155--183. \MR{1740756 (2001h:55010)}

\bibitem{waldhausen.II}
Friedhelm Waldhausen, \emph{Algebraic {$K$}-theory of topological spaces.
  {II}}, Algebraic topology, {A}arhus 1978 ({P}roc. {S}ympos., {U}niv.
  {A}arhus, {A}arhus, 1978), Lecture Notes in Math., vol. 763, Springer,
  Berlin, 1979, pp.~356--394. \MR{561230 (81i:18014b)}

\bibitem{operations}
\bysame, \emph{Operations in the algebraic {$K$}-theory of spaces}, Algebraic
  {$K$}-theory, {P}art {II} ({O}berwolfach, 1980), Lecture Notes in Math., vol.
  967, Springer, Berlin-New York, 1982, pp.~390--409. \MR{689403 (84d:55006)}

\bibitem{waldhausen}
\bysame, \emph{Algebraic {$K$}-theory of spaces}, Algebraic and geometric
  topology ({N}ew {B}runswick, {N}.{J}., 1983), Lecture Notes in Math., vol.
  1126, Springer, Berlin, 1985, pp.~318--419. \MR{802796 (86m:18011)}

\end{thebibliography}
\end{document}